\newtheorem{theorem}{Theorem}[section]
\newtheorem{assumption}[theorem]{Assumption}
\newtheorem{remark}[theorem]{Remark}
\newtheorem{proposition}[theorem]{Proposition}
\newtheorem{lemma}[theorem]{Lemma}
\newtheorem{corollary}[theorem]{Corollary}
\theoremstyle{definition}
\newcommand{\R}{\mathbb{R}}
\newcommand{\N}{\mathbb{N}}
\newcommand{\defeq}{\mathrel{\mathop:}=}
\newcommand{\uu}{\boldsymbol{u}}
\newcommand{\vv}{\boldsymbol{v}}
\newcommand{\ww}{\boldsymbol{w}}
\newcommand{\fuu}{\boldsymbol{\mathfrak{u}}}
\newcommand{\fu}{\mathfrak{u}}
\newcommand{\fmm}{\boldsymbol{\mathfrak{m}}}
\newcommand{\fm}{\mathfrak{m}}
\newcommand{\dx}{\,\mathrm{d}\boldsymbol{x}}
\newcommand{\dy}{\,\mathrm{d}\boldsymbol{y}}
\newcommand{\dt}{\,\mathrm{d}t}
\newcommand{\dv}{\,\mathrm{d}\boldsymbol{v}}
\newcommand{\PPhi}{\boldsymbol{\Phi}}
\newcommand{\xx}{\boldsymbol{x}}
\newcommand{\yy}{\boldsymbol{y}}
\newcommand{\bb}{\boldsymbol{b}}
\newcommand{\ff}{\boldsymbol{d}}
\newcommand{\om}{\omega}
\numberwithin{equation}{section}
\def\ocirc#1{\ifmmode\setbox0=\hbox{$#1$}\dimen0=\ht0
    \advance\dimen0 by1pt\rlap{\hbox to\wd0{\hss\raise\dimen0
    \hbox{\hskip.2em$\scriptscriptstyle\circ$}\hss}}#1\else
    {\accent"17 #1}\fi}
\begin{document}

\title{Quasi-Monte Carlo finite element approximation of the Navier--Stokes equations with random initial data}

\author{Seungchan Ko\thanks{Department of Mathematics, Inha University, Incheon, Republic of Korea. Email: \tt{scko@inha.ac.kr}}, ~Guanglian Li\thanks{Department of Mathematics, The University of Hong Kong, Pokfulam Road, Hong Kong. Email: {\tt{lotusli@maths.hku.hk}} GL acknowledges the support from Young Scientists fund (Project number: 12101520) by NSFC and GRF (project number: 17317122), RGC, Hong Kong.}
,~and ~Yi Yu\thanks{School of Mathematics and Information Science, Guangxi University, Nanning, Guangxi, PR China. Email: \tt{yiyu@gxu.edu.cn}}}

\date{~}

\maketitle

~\vspace{-1.5cm}

\begin{abstract}
In this work, we investigate the numerical approximation of the Navier--Stokes equations on a bounded polygonal domain in $\mathbb{R}^2$, where the initial condition is modeled by a log-uniform random field. We propose a novel numerical scheme to compute the expected value of linear functionals of the solution to the Navier--Stokes equations. The scheme is based on the finite element discretization in space, backward Euler method in time, truncated Karhunen--Lo\`eve expansion for realizing the random initial condition, and lattice-based quasi-Monte Carlo (QMC) for estimating the expected values over the parameter space (i.e., randomly-shifted lattice rules for the integration over a high-dimensional hypercube). We present a rigorous error analysis of the numerical scheme, including bounds on the mean squared error, and especially establish that the QMC converges optimally at an almost-linear rate with the constant independent of the dimension of integration. This provides one rigorous theoretical justification of the use of the QMC sampling strategy for quantifying the uncertainties of nonlinear PDEs subject to random inputs.
\end{abstract}

\noindent{\textbf{Keywords:} quasi-Monte Carlo method, finite element method, uncertainty quantification, Navier--Stokes equations, random initial data, Karhunen--Lo\`eve expansion}

\smallskip

\noindent{\textbf{AMS Classification:} 65D40, 65D32, 65N30, 76D05}

\section{Introduction}\label{sec_intro}
Uncertainty in the input data of mathematical models has received much recent attention due to its importance in various applications. The uncertainty can arise from different sources, e.g., coefficients, boundary conditions, initial conditions, and external forces. The impact of uncertainty on various quantities of interest sheds valuable insights into the inherent variability of diverse physical and engineering problems.  To describe and analyze uncertainty, probability theory offers a flexible framework where uncertain inputs are treated as random fields, and it is especially useful in characterizing the randomness associated with physical quantities of a given system.

In this work, we investigate the incompressible Navier-Stokes equations with random initial data.
Let $D\subset\mathbb{R}^2 $ be a bounded convex polygonal domain. Let $(\Omega, \mathcal{F}, \mathbb{P})$ be a probability space,
where $\Omega$ is a sample space consisting of all possible outcomes, $\mathcal{F}\subset2^{\Omega}$  a $\sigma$-algebra, and $\mathbb{P}:\mathcal{F}\rightarrow[0,1]$ a probability function. Consider the following initial-boundary value problem: find a random velocity field $\uu: [0,T]\times D\times\Omega\rightarrow\mathbb{R}^2$ and a random pressure field $p: [0,T]\times D\times\Omega\rightarrow\mathbb{R}$ such that the following equations hold $\mathbb{P}$-almost surely (a.s.):
\begin{alignat}{2}
\partial_t\uu+(\uu\cdot \nabla)\uu-\Delta\uu&=-\nabla p,\quad &&\mbox{in $(0,T]\times D\times\Omega$},\label{main_eq1}\\
\text{div}\,{\uu}&=0,\quad &&\mbox{in $(0,T]\times D\times\Omega$},\label{main_eq2}
\end{alignat}
where the symbols $\nabla$ and $\Delta$ denote differential operators with respect to the spatial variable $\xx\in D$, and $\partial_t$ denotes the time derivative. The system is subject to the homogeneous Dirichlet boundary condition and random initial condition:
\begin{align}\label{main_bc}
\uu(t,\xx,\om)&=\boldsymbol{0},\quad{\rm{on}}\,\,(0,T]\times\partial D\times\Omega,\\
\label{main_ic}
\uu(0,\xx,\om)&=\uu^0(\xx,\om),\quad{\rm{in}}\,\,D\times\Omega.
\end{align}
In this work, we consider the following initial random field $\uu^0(\xx,\omega): D\times\Omega\rightarrow\mathbb{R}$:
\begin{equation}\label{lognormal_initial}
\uu^0(\xx,\omega)=\nabla^{\perp}\exp(Z(\xx,\omega))=\left(-\partial_{x_2}\exp(Z(\xx,\omega)),\partial_{x_1}\exp(Z(\xx,\omega))\right),
\end{equation}
where $Z(\cdot,\cdot)\in L^2(\Omega;L^2(D))$ is a centered Gaussian random field. The choice
\eqref{lognormal_initial} ensures that the
initial condition $\uu^0$ is divergence-free, i.e.,
${\rm{div}}\,\uu^0(\xx,\om)=0$ in $D\times\Omega$.
The quantity of interest is the expected value of $\mathbb{E}[\mathcal{G}(\uu)]$ for any linear functional
$\mathcal{G}\in(L^2(D)^2)'$, associated with the solution $\uu$ of problem \eqref{main_eq1}-\eqref{lognormal_initial},
where $(L^2(D)^2)'$ refers to the dual space of $L^2(D)^2$.

We shall develop and analyze a novel algorithm for computing the expected value $\mathcal{G}(\uu)$.
The proposed scheme consists of three steps. The first step involves solving the problem
\eqref{main_eq1}-\eqref{main_ic} for a fixed $\omega\in\Omega$ using a fully-discrete finite element
method (FEM) scheme, based on the backward Euler in time (with a time step size $\tau$) and the
conforming finite element approximation on a regular mesh of the domain $D$ (with a mesh size $h$).
We denote the finite element solution at time step $J$ by $\uu^J_{h}$. The second step involves
truncating the Karhunen-Lo\`{e}ve (KL) expansion \cite{KL_1, KL_2} of the random initial data
$\uu^0(\xx,\omega)$ in \eqref{lognormal_initial}, which parameterizes $\uu^0$ by a countable number
of random variables, cf. \eqref{KL} below: We truncate the infinite series in \eqref{KL} with a sum
with $s$ terms, $s\in\mathbb{N}$, and obtain a truncated problem with a finite-dimensional random
vector $\yy=(y_1(\omega),\ldots,y_s(\omega))$. We
denote  by $\uu^J_{s,h}$ the FEM solution of the truncated problem. In
the third step, we approximate the quantity of interest $\mathbb{E}[\mathcal{G}(\uu(t_J))]$ by the
expected value $\mathbb{E}[\mathcal{G}(\uu^J_{s,h}(\omega))]$, and compute the integral
using quasi-Monte Carlo (QMC) methods \cite{survey_1, survey_2, survey_3}. Let $\varphi(y)=
\exp(-y^2/2)/\sqrt{2\pi}$ be the probability density function (PDF) of the standard normal distribution, and $\PPhi_s$ the cumulative
distribution function (CDF) of the standard normal random vector of length $s\in\mathbb{N}$, and $\PPhi^{-1}_s(\vv)$ its inverse for $\vv\in[0,1]^{s}$.
Then with $F^J_{s,h}(\yy)=\mathcal{G}(\uu^J_{s,h}(\cdot,\yy))$,
\begin{equation}
\mathbb{E}[\mathcal{G}(\uu^J_{s,h})]
=\int_{\R^s}\mathcal{G}(\uu^J_{s,h}(\cdot,\yy))\prod_{i=1}^s\varphi(y_i)\dy
=\mathbb{E}[F^J_{s,h}]=\int_{(0,1)^{s}}F^J_{s,h}(\PPhi^{-1}_s(\vv))\dv. \label{goal_quant}
\end{equation}
To approximate \eqref{goal_quant}, we use the QMC quadrature rule, i.e., {randomly-shifted lattice rule  (RSLRs)}:
\begin{equation*}
    \mathcal{Q}_{s,N}(F^J_{s,h};\boldsymbol{\Delta})
\defeq\frac{1}{N}\sum^N_{i=1}F^J_{s,h}
\left(\boldsymbol{\Phi}^{-1}_s\left({\rm{frac}}
\left(\frac{i\boldsymbol{z}}{N}+{\boldsymbol{\Delta}}\right)\right)\right),
\end{equation*}
where $\boldsymbol{z}\in\mathbb{N}^{s}$ is a generating vector, $\boldsymbol{\Delta}$
is a random shift uniformly distributed over $[0,1]^{s}$, and the function
$\mathrm{frac}(\cdot)$ takes the fractional part of its argument.

The design and analysis of QMC methods for linear PDEs with random coefficients,
where the QMC methods are utilized to estimate the expected values of linear functionals of the 
exact or approximate solution from the PDEs, has been well investigated in the past ten years 
\cite{KSS2012,main_ref, QMC_multi_4,Harbrecht2017}. The expected values can be transformed 
into an infinite or high dimensional integral over the parameter domain, which corresponds to 
the randomness induced by the random coefficient. A proper function space for a given integrand 
should be designed to guarantee that the associated high-dimensional integration problem is tractable.
The "standard" function spaces in the QMC analysis are weighted Sobolev spaces on $(0,1)^s$, 
which consists of functions with square-integrable mixed first derivatives. If the integrand 
lies in a suitable weighted Sobolev space, then RSLRs can be constructed so as to (nearly) achieve 
the optimal convergence rate $\mathcal{O}(n^{-1})$; see \cite{qmc_ref_1, qmc_ref_2, qmc_ref_3, qmc_ref_4, qmc_ref_5} 
and recent surveys \cite{qmc_ref_6, qmc_ref_7}.

In this work, we provide a thorough analysis of the numerical approximation $\mathcal{Q}_{s,N}
(F^J_{s,h};\boldsymbol{\Delta})$. This is achieved by decomposing
the error into the finite element error, the error due to KL
truncation, and the error due to the QMC quadrature. In Theorem \ref{final_main_thm}, we establish a bound
on the root-mean-square error at each time level $J$, i.e.,
$(\mathbb{E}^{\boldsymbol{\Delta}}[(\mathbb{E}[\mathcal{G}(\uu(t_J))]-\mathcal{Q}_{s,N}(F^J_{s,h};\boldsymbol{\Delta}))^2])^{1/2}$,
where $\mathbb{E}^{\boldsymbol{\Delta}}$ denotes taking expectation with respect to the random shift $\boldsymbol{\Delta}$. The main technical
challenge in the analysis is to bound the mixed derivatives of the velocity field with respect to the stochastic variable in suitable weighted Sobolev norms. This issue is highly challenging due to the nonlinearity of the mathematical model. These estimates are established in Theorem \ref{reg_estimate}, which allows deriving an error estimate for the QMC sampling strategy applied to the Navier--Stokes equations. 

Now we situate the study within existing works on QMC for nonlinear problems. So far despite the extensive studies on QMC for uncertainty quantification, most theoretical analyses on QMC methods are confined to uncertainty quantification for linear PDEs. Further, Harbrecht et al. \cite{HarbrechtSiebenmorgen:2016} studied the computing of nonlinear quantities of interest for a linear PDE with log-normally distributed random coefficients using a deterministic QMC rule. In sharp contrast, the study on nonlinear PDEs is fairly scarce, largely due to a striking lack of study on the related solution regularity theory. In an interesting piece of work, Cohen et al. \cite{holo_2018} also investigate the stationary Navier--Stokes equations with random parameters, and establish the holomorphic dependence of the velocity and the pressure on the domain, with the parameterized families of domain mappings over the uniform distribution. When compared with the work \cite{holo_2018}, we investigate the non-stationary Navier-Stokes equations with random initial conditions, and established new mixed derivative estimates for the solutions. Very recently, Harbrecht et al. \cite{gevrey-semilinear2023} establish regularity bounds for a parametric semilinear elliptic PDE with the nonlinearity appearing on the lowest order term. This result was shown under an $s$-Gevrey assumption using the implicit mapping theorem. This analysis technique differs markedly from the strategy in this work. This work contributes new regularity estimates, and furnishes compelling evidence that QMC methods have significant potential for the uncertainty quantification of nonlinear PDEs. 

The rest of the paper is organized as follows. In Section \ref{sec_pre}, we describe the variational formulation of the Navier--Stokes equations and its well-posedness, and give assumptions on the initial data $\uu^0$. In Section \ref{sec_fem}, we give a fully discrete FEM approximation and discuss its convergence rate. In Section \ref{sec_trunc}, we analyze the truncated KL expansion. In Section \ref{sec_qmc}, we describe the QMC quadrature rule for approximating the integral and derive an error bound. To complement the theoretical findings, we present several numerical experiments in Section \ref{sec_exp}. In Section \ref{sec_con}, we give concluding remarks. In the appendix, we collect several useful inequalities.

\section{Weak formulation of the parametric problem}\label{sec_pre}
We begin with the notations for useful function spaces. For $m\geq0$ and $1\leq  p\leq\infty$, we denote by $L^p(D)$ and $W^{m,p}(D)$ the standard Lebesgue and Sobolev spaces, and  when $p=2$, we write $H^m(D):=W^{m,2}(D)$. We denote by $H^1_0(D)$ the space of functions in $H^1(D)$ with zero trace on $\partial D$ and by $C^{\infty}_0(D)$ the space of $C^{\infty}$ functions with compact support in $D$. The notation $X(D)^d$ denotes the $d$-fold product space of $X(D)$, and $\boldsymbol{a}\cdot \boldsymbol{b}$ denotes the scalar product of two vectors $\boldsymbol{a}$ and $\boldsymbol{b}$. Also, $C$ denotes a generic positive constant, which may change at each appearance, and $A\lesssim B$ means that there exists $C>0$ such that $A\leq CB$.

Next, we define the following functional spaces that are frequently used in the study of incompressible fluid flow:
\begin{align*}
\mathcal{V}&=\{\uu\in C^{\infty}_0(D)^2: {\rm{div}}\,\uu=0\},\quad
V=\overline{\mathcal{V}}^{H^1_0(D)^2},\quad \mbox{and}\quad
H=\overline{\mathcal{V}}^{L^2(D)^2}.
\end{align*}
The space $H$ is a Hilbert space with the $L^2(D)$ inner product $(\cdot,\cdot)$, and the space $V$ is equipped with the scalar product
$a(\uu,\vv)=\int_D\nabla\uu:\nabla\vv\dx.$ Further,
we define a trilinear form  $B:V\times V\times V\to \mathbb{R}$ by
\begin{equation}\label{conv_term}
B[\uu,\vv,\ww]=\frac{1}{2}\int_D\big(((\uu\cdot\nabla)\vv)\cdot\ww-((\uu\cdot\nabla)\ww)\cdot\vv\big)\dx.
\end{equation}
This modified convective term conserves the skew symmetry of the discrete divergence, cf. \eqref{DDF} below.
Note that $B[\cdot,\cdot,\cdot]$ coincides with the trilinear form associated with the
convection term in \eqref{main_eq1} for pointwise divergence-free functions.


We have the following well-posedness of problem \eqref{main_eq1}-\eqref{main_ic} in the 2D case \cite{Temam}.
\begin{theorem}\label{NS_existence}
For any given $\uu^0\in H$, there exists a unique $\uu\in L^{\infty}(0,T;H)\cap L^2(0,T;V)$ such that
\begin{align}\label{exist_1}
\left\{\begin{aligned}
(\partial_t\uu,\vv)+B[\uu,\uu,\vv]+a(\uu,\vv)&=0,\quad\forall\vv\in V,\\
\uu(0)&=\uu^0.
\end{aligned}\right.
\end{align}
Furthermore, the following energy estimate holds
\begin{equation}\label{ener_ineq}
\sup_{0<t<T}\|\uu(t)\|^2_{L^2(D)}+2\int^T_0\|\nabla\uu(s)\|^2_{L^2(D)}\,\mathrm{d}s\leq\|\uu^0\|^2_{L^2(D)}.
\end{equation}
\end{theorem}
It is direct to show that $\partial_t\uu\in L^2(0,T;V')$, which implies $\uu\in C([0,T];H)$. Thus the identity $\uu(0)=\uu^0 $ in \eqref{exist_1}
makes sense. We restrict our discussion to the 2D case for which the uniqueness of the weak solution to \eqref{exist_1} is known. The
analysis in this work can be extended to the 3D case {{if the solution is unique.}} This is known only under more restrictive assumptions; see e.g., \cite{Temam} for local well-posedness under the Serrin condition.

The weak formulation \eqref{exist_1} motivates the deterministic variational formulation of problem \eqref{main_eq1}-\eqref{main_ic}. {{Following the standard practice \cite{KL_1,KL_2},
the given Gaussian random field $Z(\cdot,\cdot)\in L^2(\Omega;L^2(D))$ in \eqref{lognormal_initial} can be parameterized by Karhunen-Lo\`{e}ve (KL) expansion:}}
\begin{equation}\label{KL}
Z(\xx,\omega)=\sum_{j=1}^{\infty}\sqrt{\mu_j}\xi_j(\xx)y_j(\omega), \quad (\xx,\omega)\in D\times\Omega,
\end{equation}
where $\{y_j\}_{j\geq1}$ is a sequence of independently and identically distributed (i.i.d.)
random variables following the standard normal distribution $\mathcal{N}(0,1)$. Let the
sequence $\{(\mu_j,\xi_j)\}_{j\geq1}$ be the eigenpairs of the covariance operator:
\begin{equation}\label{cov}
{\mathcal{C}}v(\xx)\defeq\int_D c(\xx,\xx')v(\xx')\,\mathrm{d}\xx'.
\end{equation}
Here, the kernel $c(\cdot,\cdot)$ denotes the covariance function of $Z(\cdot,\cdot)$ and is defined by
$c(\xx,\xx')=\mathbb{E}[Z(\xx,\cdot)Z(\xx',\cdot)]$ for any $ \xx,\xx'\in D$.
The integral operator $\mathcal{C}$ is self-adjoint and compact from $L^2(D)$ into $L^2(D)$. The non-negative eigenvalues, $\|c(\xx,\xx')\|_{L^2(D\times D)}\geq\mu_1\geq\mu_2\geq\cdots\geq0$, satisfy $\sum^{\infty}_{j=1}\mu_j=\int_D{\rm{Var}}(Z)(\xx)\,\mathrm{d}\xx$, and the eigenfunctions are orthonormal in $L^2(D)$, i.e. $\int_D\xi_j(\xx)\xi_k(\xx)\,\mathrm{d}\xx=\delta_{jk}$.

It follows from \eqref{lognormal_initial} and \eqref{KL} that the initial condition $\uu^0(\xx,\om)$ can be parametrized by an infinite-dimensional vector $\yy(\om)=(y_1(\om),y_2(\om),\cdots )\in\R^{\infty}$ (of i.i.d. Gaussian random variables $y_j\sim\mathcal{N}(0,1)$). The law of $\yy$ is defined on the probability space $(\R^{\infty},\mathcal{B}(\R^{\infty}),\bar{\boldsymbol{\mu}}_{G})$, where $\mathcal{B}(\R^{\infty})$ is the Borel $\sigma$-algebra on $\R^{\infty}$ and $\bar{\boldsymbol{\mu}}_{G}$ denotes the product Gaussian measure \cite{GM}, {\color{black}i.e.,} 
$\bar{\boldsymbol{\mu}}_G=\bigotimes^{\infty}_{j=1}\mathcal{N}(0,1).$

Throughout, we make the following regularity assumption on $Z(\xx,\omega)$.
\begin{assumption}\label{ass_initial}
For some $k>3$, $Z(\xx,\om)\in L^2(\Omega;H^k(D))$.
\end{assumption}

Under Assumption \ref{ass_initial}, the eigenvalues $\mu_j$ satisfy \cite{svd_decay}
\begin{equation}\label{eqn:eig-decay}
\mu_j\lesssim j^{-k-1} \mbox{ for $j\in\mathbb{N}$ sufficiently large}.
\end{equation}
Under Assumption \ref{ass_initial}, we have
$\|\xi_j\|_{H^{\theta k}(D)}\lesssim j^{\frac{\theta (k+1)}{2}}$ for any $0\leq\theta\leq1$ \cite{KL_GL}. This and
Sobolev embedding directly imply
\begin{equation}\label{eigen_est}
\|\xi_j\|_{C(D)}\lesssim j\quad{\rm{and}}\quad\|\nabla\xi_j\|_{C(D)}\lesssim j^{3/2} \text{ for } j\in\mathbb{N} \text{ sufficiently large}.
\end{equation}
We define the sequence $\boldsymbol{b}=\{b_j\}_{j\geq1}$ by
\begin{equation}\label{eqn:b}
b_j=\sqrt{\mu_j}\|\xi_j\|_{C(D)},\quad j\geq1.
\end{equation}
For the QMC analysis in Section \ref{sec_qmc}, we make the following assumption.
\begin{assumption}\label{ass_indiv}
There exists some $p\in (0,1]$ such that $\sum_{j\geq1}b_j^p<\infty$.
\end{assumption}

If $k>\frac{2}{p}+1$, then Assumption \ref{ass_initial} and \eqref{eqn:eig-decay} imply Assumption \ref{ass_indiv}.

Next, we define the following admissible parameter set
\[U_{\bb}\defeq\Big\{\yy\in\R^{\infty}:\sum_{j\geq1}b_j|y_j|<\infty\Big\}\subset\R^{\infty}.\]
Note that the set $U_{\bb}\subset\R^{\infty}$ is not a product of subsets of $\R$. However,
if Assumption \ref{ass_indiv} holds for some $0<p<1$, the set is $\bar{\boldsymbol{\mu}}_G$-measurable and of full Gaussian measure, i.e.,
$\bar{\boldsymbol{\mu}}_G(U_{\bb})=1$ \cite[Lemma 2.28]{admi_set}. Since $\bar{\boldsymbol{\mu}}_G(U_{\bb})=1$, we can use $U_{\bb}$ as the parameter space instead of $\R^{\infty}$. Note that $U_{\bb}$ is not a product domain. But we can define a product measure, e.g., $\bar{\boldsymbol{\mu}}_G$ on $U_b$ by restriction. Thus, we can identify the random initial condition $\uu^0(\xx,\om)$ with its parametric representation $\uu^0(\xx,\yy(\om))$: for each $\yy\in U_{\bb}$, we define the deterministic initial condition by
\begin{equation}\label{p_KL1}
\uu^0(\xx,\yy) = \left(-\partial_{x_2}\exp\bigg(\sum_{j=1}^{\infty}\sqrt{\mu_j}\xi_j(\xx)y_j\bigg),\partial_{x_1}\exp\bigg(\sum_{j=1}^{\infty}\sqrt{\mu_j}\xi_j(\xx)y_j\bigg)\right).
\end{equation}
Accordingly, we consider the following deterministic and parametric variational formulation of problem \eqref{main_eq1}--\eqref{main_ic}: For each $\yy\in U_{\bb}$, find $\uu(\yy)\in V$ satisfying
\begin{align}\label{eqn:WF}
\left\{\begin{aligned}
(\partial_t\uu(\yy),\vv)+B[\uu(\yy),\uu(\yy),\vv]+a(\uu(\yy),\vv)&=0,\quad\forall\vv\in V,\\
\uu(0,\yy)&=\uu^0(\yy).
\end{aligned}\right.
\end{align}
Note that $\uu^0(\yy)\in H$ for each $\yy\in U_{\bb}$, due to Assumption \ref{ass_initial}.
This and Theorem \ref{NS_existence} imply that the solution $\uu(\yy)$ is uniquely determined
for each $\yy\in U_{\bb}$.

\section{Finite element approximation}\label{sec_fem}
We employ the implicit Euler method in time, and conforming FEM approximation in space for the variational formulation \eqref{eqn:WF} of the Navier--Stokes system. Let $\mathcal{T}_h$ be a shape-regular partition of the domain $D$ with a mesh size  $h$. We define conforming FEM spaces for the velocity $H_h\subset H^1_0(D)^2$ and the pressure $Q_h\subset L^2_0(D)$ by
\begin{align*}
H_h&=\{\mathbf{W}\in C(\overline{D})^2:\mathbf{W}|_K\in P_i(K)^2,\,\,\forall K\in\mathcal{T}_h\,\,{\rm{and}}\,\,\mathbf{W}|_{\partial D}=\boldsymbol{0}\},\\
L_h&=\{\Pi\in L^2_0(D):\Pi|_K\in P_j(K),\,\,\forall K\in\mathcal{T}_h\},
\end{align*}
where $P_i(K)$ denotes the space of polynomials of degree $i$ on the triangle $K\in\mathcal{T}_h$.
The FEM spaces $H_h$ and $L_h$ are assumed to satisfy the discrete inf-sup condition:
\begin{equation}\label{eqn:inf-sup}
\|q_h\|_{L^2(D)}\leq C \sup_{\vv_h\in H_h\setminus\{\boldsymbol{0}\}}\frac{(\nabla\cdot\vv_h,q_h)}{\|\nabla\vv_h\|_{L^2(D)}},\quad\forall q_h\in L_h,
\end{equation}
where the constant $C>0$ is independent of $h$. This condition holds for several finite element spaces,
e.g., Taylor-Hood element or MINI element \cite{girault}. Also we define a discrete divergence-free subspace of $H_h$ by
\begin{equation}\label{DDF}
V_h\defeq\{\vv_h\in H_h:(\nabla\cdot\vv_h,q_h)=0,\quad\forall q_h\in Q_h\}.
\end{equation}

Given $\ell\in\mathbb{N}$, we divide the interval $[0,T]$ into a uniform grid
$0=t_0<t_1<\cdots<t_{\ell}=T$, with $t_j=j\tau $, $j=0,1,\cdots,\ell$ and the time step size $\tau:=T/{\ell}$.
We employ the following implicit Euler conforming finite element approximation: find $\uu^{j+1}_{h}\in V_h$ for $j\in\{0,\cdots,\ell-1\}$ such that
\begin{align}
\left\{\begin{aligned}\label{SIBE}
\tau^{-1}(\uu^{j+1}_{h}-\uu^j_{h},\vv_h)+B[\uu^{j+1}_{h},\uu^{j+1}_{h},\vv_h]+a(\uu^{j+1}_{h},\vv_h)=0,&\quad\forall\vv_h\in V_h,
\\
(\uu_{h}^0,\vv_h)=(\uu^{0},\vv_h),&\quad\forall\vv_h\in V_h.
\end{aligned}\right.
\end{align}

The scheme \eqref{SIBE} is well-posed \cite{girault, Temam}, and satisfies the following stability result \cite{girault}.
\begin{proposition}\label{disc-well}
Let the time step size $\tau>0$ be sufficiently small. Then the discrete problem \eqref{SIBE} has a unique solution $\uu_h^{j}\in V_h$,
and furthermore the following stability estimate holds
\begin{equation}\label{stability}
\|\uu^{j+1}_h\|^2_{L^2(D)}+\tau\sum^j_{k=0}\|\nabla\uu^{k+1}_h\|^2_{L^2(D)}\leq \|\uu^0_{h}\|^2_{L^2(D)},\quad
\forall j=0,1,\cdots,\ell-1.
\end{equation}
\end{proposition}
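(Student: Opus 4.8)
The plan is to treat \eqref{SIBE} at each step as a finite-dimensional nonlinear problem on $V_h$ and to exploit at every stage the skew-symmetry $B[\vv_h,\ww,\ww]=0$ built into \eqref{conv_term}. I would prove the a priori bound \eqref{stability} first, since it supplies the control needed for both existence and uniqueness. Fixing $j$ and testing \eqref{SIBE} with $\vv_h=\uu^{j+1}_h$, the convective term drops out by skew-symmetry and $a(\uu^{j+1}_h,\uu^{j+1}_h)=\|\nabla\uu^{j+1}_h\|_2^2$. Applying the polarization identity $2(\uu^{j+1}_h-\uu^j_h,\uu^{j+1}_h)=\|\uu^{j+1}_h\|_2^2-\|\uu^j_h\|_2^2+\|\uu^{j+1}_h-\uu^j_h\|_2^2$ to the discrete time derivative and multiplying by $2\Delta t$ gives
\[\|\uu^{j+1}_h\|_2^2-\|\uu^j_h\|_2^2+\|\uu^{j+1}_h-\uu^j_h\|_2^2+2\Delta t\,\|\nabla\uu^{j+1}_h\|_2^2=0.\]
Discarding the nonnegative term $\|\uu^{j+1}_h-\uu^j_h\|_2^2$ and summing over $k=0,\dots,j$ telescopes the first two terms and yields \eqref{stability} (in fact with the sharper factor $2\Delta t$, of which the stated bound is a weakening).

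For existence at step $j+1$ I would recast the scheme as locating a zero of the continuous map $P\colon V_h\to V_h$ determined by $(P(\ww),\vv_h)=(\ww-\uu^j_h,\vv_h)+\Delta t\,B[\ww,\ww,\vv_h]+\Delta t\,a(\ww,\vv_h)$ for all $\vv_h\in V_h$; this is well defined since $V_h$ is a finite-dimensional Hilbert space under $(\cdot,\cdot)$, and $P$ is continuous because $B$ and $a$ are polynomial on $V_h$. Testing against $\ww$ and again using skew-symmetry yields $(P(\ww),\ww)\ge\|\ww\|_2^2-\|\uu^j_h\|_2\|\ww\|_2$, which is strictly positive whenever $\|\ww\|_2=\rho$ with $\rho>\|\uu^j_h\|_2$. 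The standard corollary of Brouwer's fixed-point theorem (see, e.g., \cite{Temam}) then furnishes $\ww\in V_h$ with $P(\ww)=\boldsymbol{0}$, i.e.\ a discrete solution $\uu^{j+1}_h$; this step is unconditional in $\Delta t$. Since the problem is posed directly on $V_h$, no use of the inf-sup condition is required here, the latter serving only to recover a pressure.

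Uniqueness is the delicate part and the place where the smallness of $\Delta t$ enters. Given two solutions $\ww_1,\ww_2$ with the same data, set $\boldsymbol{e}=\ww_1-\ww_2$, subtract the equations, and use bilinearity to write $B[\ww_1,\ww_1,\vv_h]-B[\ww_2,\ww_2,\vv_h]=B[\ww_1,\boldsymbol{e},\vv_h]+B[\boldsymbol{e},\ww_2,\vv_h]$. Testing with $\vv_h=\boldsymbol{e}$ eliminates $B[\ww_1,\boldsymbol{e},\boldsymbol{e}]$ by skew-symmetry and leaves $\|\boldsymbol{e}\|_2^2+\Delta t\,\|\nabla\boldsymbol{e}\|_2^2=-\Delta t\,B[\boldsymbol{e},\ww_2,\boldsymbol{e}]$. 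In two dimensions I would estimate the right-hand side with the Ladyzhenskaya inequality $\|\boldsymbol{e}\|_{L^4}^2\lesssim\|\boldsymbol{e}\|_2\|\nabla\boldsymbol{e}\|_2$ to obtain $|B[\boldsymbol{e},\ww_2,\boldsymbol{e}]|\lesssim\|\nabla\ww_2\|_2\,\|\boldsymbol{e}\|_2\,\|\nabla\boldsymbol{e}\|_2$, and then absorb the $\|\nabla\boldsymbol{e}\|_2$ factor into the dissipation via Young's inequality. The subtle point is the treatment of $\|\nabla\ww_2\|_2$: bounding it through \eqref{stability} directly would merely cancel the $\Delta t$ and force a small-data condition, so instead I would invoke the inverse inequality $\|\nabla\ww_2\|_2\lesssim h^{-1}\|\ww_2\|_2\lesssim h^{-1}\|\uu^0_h\|_2$ on $V_h$ together with \eqref{stability}. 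This reduces the estimate to $\|\boldsymbol{e}\|_2^2\le C\,\Delta t\,h^{-2}\|\uu^0_h\|_2^2\,\|\boldsymbol{e}\|_2^2$, whence $\boldsymbol{e}=\boldsymbol{0}$ as soon as $\Delta t$ is small enough relative to $h$ and the data. Making this coupling precise, namely that the dissipation alone cannot dominate the convective perturbation and it is the explicit factor $\Delta t$ in front of $B$ that closes the loop, is the main obstacle of the proof.
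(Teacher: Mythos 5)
Your argument is essentially correct, but note that the paper does not prove this proposition at all: it is quoted from the literature (the text points to \cite{girault, Temam} for well-posedness and to \cite{girault} for the stability bound), so your write-up is a self-contained reconstruction of the standard argument rather than a variant of anything in the paper. The three ingredients you use are the right ones. The energy identity obtained by testing with $\uu^{j+1}_h$, killing the convective term via the skew-symmetry of \eqref{conv_term}, and telescoping indeed yields \eqref{stability} with the sharper factor $2\Delta t$ and no condition on $\Delta t$; the Brouwer-type corollary on the finite-dimensional space $V_h$ gives unconditional existence; and you correctly isolate the genuine subtlety, namely that the a priori bound alone only controls $\Delta t\|\nabla\ww_2\|_2^2$ (equivalently $\Delta t\|\ww_2\|_4^4$ via Lemma \ref{L4}), which produces a small-\emph{data} rather than a small-$\Delta t$ uniqueness condition, so an extra ingredient such as the inverse inequality on $V_h$ is needed. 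Two minor caveats: first, your interpolated bound $|B[\boldsymbol{e},\ww_2,\boldsymbol{e}]|\lesssim\|\nabla\ww_2\|_2\|\boldsymbol{e}\|_2\|\nabla\boldsymbol{e}\|_2$ is not quite what Lemma \ref{2d_est} gives for the second half of \eqref{conv_term} (writing $B[\boldsymbol{e},\ww_2,\boldsymbol{e}]=-B[\boldsymbol{e},\boldsymbol{e},\ww_2]$ one gets $\|\boldsymbol{e}\|_2^{1/2}\|\nabla\boldsymbol{e}\|_2^{3/2}\|\ww_2\|_4$ as in \eqref{4_3}); this only changes the power of $\|\nabla\ww_2\|_2$ surviving Young's inequality and does not affect the conclusion. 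Second, the global inverse inequality $\|\nabla\ww_2\|_2\lesssim h^{-1}\|\ww_2\|_2$ requires quasi-uniformity of $\mathcal{T}_h$, which is slightly more than the shape-regularity the paper assumes; the resulting condition $\Delta t\lesssim h^2/\|\uu^0_h\|_2^2$ is nevertheless consistent with the proposition's unquantified hypothesis that $\Delta t$ be ``sufficiently small,'' and is the form in which such uniqueness conditions appear in \cite{girault}.
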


Now we discuss the error estimate for the scheme \eqref{SIBE}, which has a
long and rich history (see, e.g., \cite{NS_est_1, girault,Temam, volker} for
a rather incomplete list).
The scheme has a first-order convergence in time,
while the convergence order for spatial discretization depends on the regularity of solutions.
More precisely, we decompose the error $\uu(t_J)-\uu^J_h$ of the fully-discrete scheme \eqref{SIBE} into
\[\uu(t_J)-\uu^J_
h = (\uu(t_J)-\uu_h(t_J))+(\uu_h(t_J)-\uu^J_h),\]
where $\uu_h$ is the solution to the semi-discrete scheme
\begin{align*}
\left\{\begin{aligned}
(\partial_t\uu_h,\vv_h)+B[\uu_h,\uu_h,\vv_h]+a(\uu_h,\vv_h)&=0,\quad\forall\vv_h\in V_h,\\
(\uu^0_{h},\vv_h)&=(\uu_{0},\vv_h),\quad\forall\vv_h\in V_h.
\end{aligned}\right.
\end{align*}
The terms $\uu(t_J)-\uu_h(t_J)$ and $\uu_h(t_J)-\uu^J_h$ denote the spatial and
time discretization errors, respectively. For the spatial error, if
$\|\uu^0\|_{H^1(D)}\leq M$ for some $M>0$, then \cite[Theorem 4.5]{suli} gives
\begin{equation}\label{space_error}
\|\uu(t)-\uu_h(t)\|_{L^2(D)}\leq Ct^{-\frac{1}{2}}h^2, \quad \forall t\in (0,T],
\end{equation}
where the constant $C:=C(M,T,D)$ depends only on $M$, $T$ and $D$.
Note that the initial data $\uu^0$ also depends on the stochastic variable $\yy\in U_b$. To make the spatial discretization error independent of the stochastic variable $\yy\in U_b$,
we further assume that there exists some constant $M>0$ such that
\begin{align}\label{assumption:zzz}
\sup_{\yy\in U_{\bb}}\|\uu^0(\yy)\|_{H^1(D)}< M.
\end{align}


{{For the temporal error}}, there are several results depending on the type of temporal discretization \cite{heywood2, heywood4, Temam, girault, volker}, under different time regularity assumptions on the solution. Since a detailed discussion of the finite element approximation of the Navier--Stokes equations is not our main focus, we will not explicitly specify the regularity assumptions. Instead, we assume the following condition: for any sufficiently small $h>0$,
\begin{equation}\label{time_ass}
\mathbb{E}\left[\|\partial_t\uu_h(\yy)\|_{L^2(0,T;V)}+\|\partial_{tt}\uu_h(\yy)\|_{L^2(0,T;V')}\right]\leq C,
\end{equation}
for some $C>0$ independent of $\yy\in U_{\bb}$. See, e.g., \cite{heywood4, reg_ref_1, reg_ref_2} for discussions on similar properties of $\uu_h$.
Then following the argument of \cite{Temam, girault}, we  obtain
\begin{equation}\label{time_error_est}
\mathbb{E}\left[\|\uu_h(t_J)-\uu^J_h\|_{L^2(D)}\right]\leq C\tau, \quad \forall J>0,
\end{equation}
where the constant $C>0$ is independent of $\yy\in U_{\bb}$.

Summarizing the above discussion and using the linearity of $\mathcal{G}$ and H\"older's inequality (for
the term related to $\mathcal{G}$), we have the following result.
\begin{theorem}\label{fem_main_thm}
Let $D\subset \mathbb{R}^2$ be a convex polygonal domain and $\mathcal{G}\in(L^2(D)^2)'$, and let \eqref{assumption:zzz} and \eqref{time_ass} hold. 
Then there exists a $C>0$ independent of $\yy\in U_{\bb}$ such that
\begin{equation*}
\mathbb{E}[|\mathcal{G}(\uu(t_J))-\mathcal{G}(\uu^J_h)|]\leq C(t_J^{-\frac{1}{2}}h^2+\tau).
\end{equation*}
\end{theorem}

\section{Truncation of the infinite-dimensional problem}\label{sec_trunc}

The second discretization step reduces the parametric dimension: we truncate the infinite series \eqref{KL} to a finite-dimensional summation, and obtain a truncated initial data
\begin{equation}\label{p_KL2}
\uu^0_{s}(\xx,\yy) =(-\partial_{x_2}\exp Z_s,\partial_{x_1}\exp Z_s),
\end{equation}
where $Z_s(\cdot,\cdot)$ is the truncated KL expansion of the random field $Z(\cdot,\cdot)$, given by
\[
Z_s(\xx,\yy):=\sum_{j=1}^{s}\sqrt{\mu_j}\xi_j(\xx)y_j.
\]
Note that $\uu^0_{s}(\xx,\yy)$ can be identified as the initial data $\uu^0(\xx,\yy)$ evaluated at the vector $\yy=(y_1,\cdots,y_s,0,0,\cdots)$. For any set of `active' coordinates $\kappa\subset\N$, we denote the vectors $\yy\in U_{\bb}$ with $y_j=0$ for $j\notin\kappa$ by $(\yy_{\kappa};\boldsymbol{0})$.

Then consider the following scheme for the truncated problem with the initial data $\uu^0_s$:
 Given $\uu^j_{s,h}\in V_h$, and $s\in\mathbb{N}$, find $\uu^{j+1}_{s,h}\in V_h$ such that
\begin{align}\label{trunc_SIBE}
\left\{\begin{aligned}
\tau^{-1}(\uu^{j+1}_{s,h}-\uu^j_{s,h},\vv_h)+B[\uu^{j+1}_{s,h},\uu^{j+1}_{s,h},\vv_h]+a(\uu^{j+1}_{s,h},\vv_h)=0,&\quad\forall\vv_h\in V_h,\\
(\uu^0_{s,h},\vv_h)=(\uu^0_{s},\vv_h),& \quad\forall\vv_h\in V_h,
\end{aligned}\right.
\end{align}
where $\uu^0_{s,h}$ is the $L^2(D)$-projection of $\uu^0_{s}$ into $V_h$.
The well-posedness of the scheme \eqref{trunc_SIBE} follows by the argument in Section \ref{sec_fem}.
The next result gives the truncation error.
\begin{theorem}\label{truncation_thm}
Let Assumption \ref{ass_initial} hold and $\mathcal{G}\in (L^2(D)^2)'$. If the time step size $\tau>0$ is sufficiently small, i.e.,
\begin{align}\label{eq:assumption-temporalstep}
\tau\|\uu^j_h\|^4_{L^4(D)}<1,\quad \forall j>0 \text{ and } h>0,
\end{align}
then for any time level $j\in\{0,1,\cdots,\ell\}$, mesh size $h>0$ and parametric dimension $s\in\mathbb{N}$,
there exists $C>0$ independent of $j,h$ and $s$ such that
\[
\mathbb{E}[|\mathcal{G}(\uu^j_h)-\mathcal{G}(\uu^j_{s,h})|]\leq C s^{-\frac{k}{2}+\frac{3}{2}}.
\]
\end{theorem}
\begin{proof}
First, by Fernique's theorem in Theorem \ref{Fernique}, for any $s\in\mathbb{N}$, we have
\begin{equation}\label{eqn:bdd-Fernique}
\|\exp(Z)\|_{L^2(\Omega;C(D))}\lesssim 1\quad{\rm{and}}\quad \|\exp(Z_s)\|_{L^2(\Omega;C(D))}\lesssim 1,
\end{equation}
(see, e.g., Proposition 5.6 in \cite{KL_GL}).
Let $\uu^j_h$ and $\uu^j_{s,h}$ be the solution to the scheme \eqref{SIBE} and problem \eqref{trunc_SIBE},
respectively, and $\ff^{i}_{s,h}:=\uu^{i}_h-\uu^{i}_{s,h}$.
Then subtracting \eqref{SIBE} from \eqref{trunc_SIBE} and setting $\vv_h=\ff^{j+1}_{s,h}\in V_h$ yield
\begin{equation*}
\tau^{-1}(\ff^{j+1}_{s,h}-\ff^{j}_{s,h},\ff^{j+1}_{s,h})+(\nabla\ff^{j+1}_{s,h},\nabla\ff^{j+1}_{s,h})=B[\uu^{j+1}_h,\uu^{j+1}_h,\ff^{j+1}_{s,h}] -B[\uu^{j+1}_{s,h},\uu^{j+1}_{s,h},\ff^{j+1}_{s,h}].
\end{equation*}
By the skew symmetry of $B[\cdot,\cdot,\cdot]$, H\"older's inequality,
Ladyzhenskaya's inequality in Lemma \ref{2d_est} and Young's inequality, we deduce
\begin{equation}\label{4_3}
\begin{aligned}
&\big|B\big[\uu^{j+1}_h,\uu^{j+1}_h,\ff^{j+1}_{s,h}]-B\big[\uu^{j+1}_{s,h},\uu^{j+1}_{s,h},\ff^{j+1}_{s,h}\big]\big|\\
 =& \big|B\big[\ff^{j+1}_{s,h},\uu^{j+1}_{h},\ff^{j+1}_{s,h}\big]+B\big[\uu^{j+1}_{s,h},\ff^{j+1}_{s,h},\ff^{j+1}_{s,h}\big]\big|\\
=&\big| -B\big[\ff^{j+1}_{s,h},\ff^{j+1}_{s,h},\uu^{j+1}_h\big]\big|
\lesssim\|\ff^{j+1}_{s,h}\|_{L^4(D)}\|\nabla \ff^{j+1}_{s,h}\|_{L^2(D)}\|\uu^{j+1}_{h}\|_{L^4(D)}\\
\lesssim&\|\ff^{j+1}_{s,h}\|^{1/2}_{L^2(D)}\|\nabla \ff^{j+1}_{s,h}\|^{3/2}_{L^2(D)}\|\uu^{j+1}_{h}\|_{L^4(D)}\\
\leq&\tfrac{c}{4}\|\uu^{j+1}_h\|^4_{L^4(D)}\|\ff^{j+1}_{s,h}\|^2_{L^2(D)}+\tfrac{3}{4}\|\nabla\ff^{j+1}_{s,h}\|^2_{L^2(D)}.
\end{aligned}
\end{equation}
Using the identity
\[\tau^{-1}(\ff^{j+1}_{s,h}-\ff^{j}_{s,h},\ff^{j+1}_{s,h})=(2\tau)^{-1}(\|\ff^{j+1}_{s,h}\|^2_{L^2(D)}
-\|\ff^{j}_{s,h}\|^2_{L^2(D)}+\|\ff^{j+1}_{s,h}-\ff^j_{s,h}\|^2_{L^2(D)}),\]
and summing up the previous results from $j=0$ to $j=k-1$ for $k\geq1$ yield
\begin{align*}
\|\ff^k_{s,h}\|^2_{L^2(D)}+\frac{\tau}{2}\sum^{k-1}_{j=0}\|\nabla \ff^{j+1}_{s,h}\|^2_{L^2(D)}
\lesssim &\|\ff^0_{s,h}\|^2_{L^2(D)}+\frac{\tau}{2}\sum^{k-1}_{j=0}\|\uu^{j+1}_h\|^4_{L^4(D)}\|\ff^{j+1}_{s,h}\|^2_{L^2(D)}.
\end{align*}
Then \eqref{eq:assumption-temporalstep} and the discrete Gronwall's inequality in Lemma \ref{gron} imply
\begin{align*}
\|\ff^k_{s,h}\|^2_{L^2(D)}
& \lesssim\|\ff^0_{s,h}\|^2_{L^2(D)}\exp\left(\tau\sum^{k-1}_{j=0}\frac{\|\uu^{j+1}_h\|^4_{L^4(D)}}{1-\tau\|\uu^{j+1}_h\|^4_{L^4(D)}}\right)\\
&\lesssim\|\ff^0_{s,h}\|^2_{L^2(D)}\exp\left(C\tau\sum^{k-1}_{j=0}\|\uu^{j+1}_h\|^4_{L^4(D)}\right).
\end{align*}
Note that the exponential term on the right-hand side is uniformly bounded in $k\geq1$. Indeed,
Proposition \ref{disc-well} and Ladyzhenskaya's inequality (cf. Lemma \ref{2d_est} in the appendix)
yield the following $L^4(D)$-estimate on $\uu_h^{j}$
\begin{equation}\label{eqn:uhJ-L4}
\tau\sum^{j-1}_{k=0}\|\uu^{k+1}_h\|^4_{L^4(D)}\leq 2\|\uu^0_{h}\|^4_{L^2(D)}.
\end{equation}
Further, by taking $\vv_h=\uu^0_{h}-\uu^0_{s,h}$ in the definition of $L^2$-projection, i.e.,
$(\uu^0-\uu^0_{s},\vv_h)=(\uu^0_{h}-\uu^0_{s,h},\vv_h)$, and H\"older's inequality, we deduce
\[\|\uu^0_{h}-\uu^0_{s,h}\|_{L^2(D)}\leq\|\uu^0-\uu^0_{s}\|_{L^2(D)}\lesssim \|\uu^0-\uu^0_{s}\|_{C(D)},\]
since the domain $D$ is bounded. To estimate $\|\uu^0-\uu^0_{s}\|_{C(D)}$, we derive
from the definitions of $\uu^0$ and $\uu^0_{s}$ that
\begin{align*}
&\mathbb{E}\left[\|\uu^0-\uu^0_{s}\|_{C(D)}\right]\\
=&\mathbb{E}\left[\|(-\partial_{x_2}Z\exp(Z)+\partial_{x_2}Z_s\exp(Z_s),\partial_{x_1}Z\exp(Z)-\partial_{x_1}Z_s\exp(Z_s))\|_{C(D)}\right]\\
\lesssim& \sum_{i=1}^2\mathbb{E}\left[\|\partial_{x_i}Z\exp(Z)-\partial_{x_i}Z_s\exp(Z_s)\|_{C(D)}\right].
\end{align*}
Then the triangle inequality, H\"older's inequality, Assumption \ref{ass_initial} and the Sobolev embedding yield
\begin{align*}
&\hspace{5mm}\mathbb{E}\left[\|\partial_{x_i}Z\exp(Z)-\partial_{x_i}Z_s\exp(Z_s)\|_{C(D)}\right]\\
&\lesssim \mathbb{E}\left[\|\partial_{x_i}Z\exp(Z)-\partial_{x_i}Z\exp(Z_s)\|_{C(D)}\right]+\mathbb{E}\left[\|\partial_{x_i}Z\exp(Z_s)-\partial_{x_i}Z_s\exp(Z_s)\|_{C(D)}\right]\\
&\lesssim \mathbb{E}\left[\|\partial_{x_i}Z\|_{C(D)}\|\exp(Z)-\exp(Z_s)\|_{C(D)}\right]+\mathbb{E}\left[\|\exp(Z_s)\|_{C(D)}\|\partial_{x_i}Z-\partial_{x_i}Z_s\|_{C(D)}\right]\\
&\lesssim\mathbb{E}\left[\|\exp(Z)-\exp(Z_s)\|_{C(D)}\right]+\mathbb{E}\left[\|\exp(Z_s)\|_{C(D)}^2\right]^{\frac{1}{2}}\times\mathbb{E}\left[\|\partial_{x_i}Z-\partial_{x_i}Z_s\|^2_{C(D)}\right]^{\frac{1}{2}}\\
&=:{\rm{I}}+{\rm{II}}\times{\rm{III}}.
\end{align*}
Note that under Assumption \ref{ass_initial}, for sufficiently large $s\in\mathbb{N}$, there holds \cite{KL_GL}
\begin{equation}\label{decay_estimate}
\|Z-Z_s\|_{L^2(\Omega;C(D))}\lesssim s^{{-\frac{k}{2}}+1}.
\end{equation}
Thus, for the term ${\rm{I}}$, it follows from the inequality $|e^x-e^y|\leq|x-y|(e^x+e^y)$ for
any $x,y\in\R$, H\"older's inequality, and \eqref{eqn:bdd-Fernique} that
\begin{align*}
{\rm{I}}
& \lesssim \mathbb{E}\left[\|Z-Z_s\|_{C(D)}\|\exp(Z)+\exp(Z_s)\|_{C(D)}\right]\\
& \lesssim \|Z-Z_s\|_{L^2(\Omega;C(D))}\|\exp(Z)+\exp(Z_s)\|_{L^2(\Omega;(C(D))}\\
& \lesssim \|Z-Z_s\|_{L^2(\Omega;C(D))}\left(\|\exp(Z)\|_{L^2(\Omega;C(D))}+\|\exp(Z_s)\|_{L^2(\Omega;C(D))}\right)
 \lesssim s^{-\frac{k}{2}+1}.
\end{align*}
Meanwhile, \eqref{eqn:bdd-Fernique} directly implies ${\rm{II}}\lesssim 1$. Since $\{y_j(\om)\}_{j\in\mathbb{N}}$ is
orthonormal in $L^2(\Omega)$, using the estimates \eqref{eqn:eig-decay} and \eqref{eigen_est} leads to
\begin{align*}
{\rm III}&=\bigg\|\sum_{j> s}\sqrt{\mu_j}\partial_{x_i}\xi_j y_j\bigg\|_{L^2(\Omega;C(D))}\lesssim\bigg\|\sum_{j\geq s}\sqrt{\mu_j}\|\partial_{x_i}\xi_j\|_{C(D)}y_j\bigg\|_{L^2(\Omega)}\\
&\lesssim\bigg(\sum_{j> s}\mu_j\|\partial_{x_i}\xi_j\|^2_{C(D)}\bigg)^{\frac{1}{2}}\lesssim\bigg(\sum_{j> s}j^{-k-1}j^3\bigg)^{\frac{1}{2}}
\lesssim\left(\int^{\infty}_s t^{-k+2}\dt\right)^{\frac{1}{2}}
\lesssim s^{-\frac{k}{2}+\frac{3}{2}},
\end{align*}
upon noting $k>2$ in Assumption \ref{ass_initial}. Finally, we derive
\begin{align*}
\mathbb{E}[|\mathcal{G}(\uu^j_h)-\mathcal{G}(\uu^j_{s,h})|]&= \mathbb{E}[|\mathcal{G}(\uu^j_h-\uu^j_{s,h})|]\lesssim \mathbb{E}[\|\mathcal{G}\|_{(L^2(D))'}\|\uu^j_h-\uu^j_{s,h}\|_{L^2(D)}]\\
&\lesssim \mathbb{E}[\|\uu^0-\uu^0_{s}\|_{L^2(D)}] \lesssim s^{-\frac{k}{2}+\frac{3}{2}}.
\end{align*}
Combining the preceding estimates proves the desired assertion.
\end{proof}
\begin{remark}
The estimate \eqref{eqn:uhJ-L4} implies
$\tau\|\uu^j_h\|^4_{L^4(D)}\leq 2\|\uu^0\|^4_{L^2(D)}$ for all $j\geq0$ and $ h>0$.
Below we shall assume the smallness of initial data $\|\uu^0\|_{L^2(D)}\leq\varepsilon\approx\tau^{1/2}$. Thus,
for sufficiently small $\tau>0$, the condition \eqref{eq:assumption-temporalstep} holds independently of $j\geq 1$ and $h>0$.
\end{remark}

\section{Quasi-Monte Carlo integration}\label{sec_qmc}
Now we use QMC to approximate the linear functional $\mathcal{G}\in (L^2(D)^2)'$, i.e.,
$\mathbb{E}[\mathcal{G}(\uu^J_{s,h})]$. More precisely, given the parametric
dimension $s\in\N$, time level $J\in \{1,\cdots, \ell\}$ and mesh size $h>0$, we
approximate $\mathbb{E}[\mathcal{G}(\uu^J_{s,h})]$ by the following integral
\begin{equation}\label{QMC_term}
I_s(F^J_{s,h})\defeq\int_{\R^s}F^J_{s,h}(\yy)\prod^s_{j=1}\phi(y_j)\,{\textrm{d}}\yy, \quad{\rm{with}}\,\,\,F^J_{s,h}(\yy)=\mathcal{G}(\uu^J_{s,h}(\cdot,\yy)).
\end{equation}
Here, $\phi(y){\color{black}=}\exp(-y^2/2)/\sqrt{2\pi}$ and $\Phi(y)=\int^y_{-\infty}\exp(-t^2/2)/\sqrt{2\pi}\dt$
is the standard Gaussian probability density function and the cumulative normal distribution function, respectively. To apply QMC methods, we transform the integral \eqref{QMC_term}
over the unbounded domain $\R^s$ to a hyper-cube $[0,1]^s$ by introducing new variables $\yy=\boldsymbol{\Phi}_s^{-1}(\vv)$, where $\boldsymbol{\Phi}_s^{-1}(\vv)$ is the inverse cumulative normal distribution function for $\vv\in [0,1]^s$. Then changing variables yields
\[
\mathbb{E}\left[\mathcal{G}\left(\uu^J_{s,h}\right)\right]=\mathbb{E}\left[F^J_{s,h}\right] =\int_{(0,1)^s}F^J_{s,h}(\boldsymbol{\Phi}^{-1}_s(\vv))\,{\textrm{d}}\vv.
\]
We approximate the integral on $(0,1)^s$ by a {randomly-shifted lattice rule} (RSLR):
\begin{equation}\label{QMC_main}
\mathcal{Q}_{s,N}(F^J_{s,h};\boldsymbol{\Delta})\defeq\frac{1}{N}\sum^N_{i=1}F^J_{s,h} \left(\boldsymbol{\Phi}^{-1}_s\left({\rm{frac}}\left(\frac{i\boldsymbol{z}}{N}+{\boldsymbol{\Delta}}\right)\right)\right),
\end{equation}
where $\boldsymbol{z}\in\N^s$ is a deterministic generating vector and ${\boldsymbol{\Delta}}$ is
a uniformly distributed random shift over $[0,1]^s$. We shall establish an $\mathcal{O}(N^{-1})$ bound on the root-mean-square error (RMSE)
\begin{equation*}
\textrm{RMSE}_{\rm qmc}(N):=(\mathbb{E}^{\boldsymbol{\Delta}}[(\mathbb{E}[\mathcal{G}(\uu^J_{s,h})] -\mathcal{Q}_{s,N}(F^J_{s,h};\boldsymbol{\Delta}))^2])^{1/2},
\end{equation*}
where $\mathbb{E}^{\boldsymbol{\Delta}}$ denotes taking expectation in the random shift ${\boldsymbol{\Delta}}\in[0,1]^s$, with a constant independent of $s\in\N$, $J\in\mathbb{N}$ and $h>0$.

\subsection{Solution regularity in the stochastic variables}
First we bound mixed first derivatives of $\uu_{s,h}^{J}(\yy)$ with respect to the parametric variable $\yy\in U_{\bb}$. This represents the main technical challenge in the QMC analysis. Below $\fuu=(\fu_j)_{j\in\N}$ denotes the standard multi-index of non-negative integers with $|\fuu|=\sum_{j\geq1}\fu_j<\infty$, and  $\partial^{\fuu}\uu$ denotes the mixed derivative of $\uu$ with respect to all variables in the multi-index $\fuu$. We focus on the mixed first derivative $\partial^{\fuu}$, i.e., $\fu_j\in\{0,1\}$ for all $j\in \mathbb{N}$.
By Assumption \ref{ass_indiv}, there exists $N_0\in\mathbb{N}$ sufficiently large such that
$j>N_0$  implies $b_j\leq\frac{1}{2}$. Then we define a sequence of positive numbers $\{C_j\}_{j=1}^{\infty}$ by
\begin{equation}\label{eqn:cj}
C_j:=\left\{
\begin{aligned}
&\max\{2b_j,1\} &&\text{ if }j\in\{1,\cdots,N_0\}; \\
&1 &&\text{ if } j>N_0.
\end{aligned}
\right.
\end{equation}
Then the constant $C^*>0$, defined by
\begin{align}\label{eqn:cc}
C^*:=\prod_{j\geq1}C_j=\prod_{j=1}^{N_0}\max\{2b_j,1\},
\end{align}
is finite, i.e., $C^*<\infty$.

Next, we estimate the mixed first derivatives of the discrete truncated initial data $\uu^0_{s,h}$.
\begin{lemma}
Let $\partial^{\fuu}$ be a mixed first derivative with respect to $\yy$ and let Assumption \ref{ass_indiv} hold. Then the following estimate holds
\begin{equation}\label{mid_init_est}
\|\partial^{\fuu}\uu^0_{s}\|_{L^2(D)}\leq
2^{-|\fuu|}(C^*+2\sqrt{2})|\fuu|
\|\exp Z_s(\cdot,\yy)\|_{H^1(D)}.
\end{equation}
\end{lemma}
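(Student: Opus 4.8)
The plan is to reduce the estimate to a pointwise-in-$\xx$ computation of the mixed $\yy$-derivative, and then control the resulting $L^2(D)$-norm using the decay estimates of Lemma~\ref{new_ass} and \eqref{eigen_est} together with the construction of the constants $C_j$, $C^*$ in \eqref{eqn:cj}--\eqref{eqn:cc}. The starting observation is that $Z_s(\xx,\yy)=\sum_{j=1}^s\sqrt{\mu_j}\xi_j(\xx)y_j$ is affine in $\yy$, so $\partial_{y_j}Z_s=\sqrt{\mu_j}\xi_j$ is independent of $\yy$ and all second and higher $\yy$-derivatives of $Z_s$ vanish. Hence, for a mixed first derivative indexed by $\fuu$ (so $\fu_j\in\{0,1\}$), differentiating the scalar $\exp Z_s$ collapses to the clean formula
\[
\partial^{\fuu}\exp Z_s=\Big(\prod_{j:\,\fu_j=1}\sqrt{\mu_j}\,\xi_j(\xx)\Big)\exp Z_s=:P_{\fuu}(\xx)\,\exp Z_s .
\]
Since the $\yy$-derivative commutes with the spatial operator $\nabla^{\perp}$ and $\uu^0_s=\nabla^{\perp}\exp Z_s$, this gives $\partial^{\fuu}\uu^0_s=\nabla^{\perp}(P_{\fuu}\exp Z_s)$; because $|\nabla^{\perp}g|=|\nabla g|$ pointwise, one obtains the identity $\|\partial^{\fuu}\uu^0_s\|_2=\|\nabla(P_{\fuu}\exp Z_s)\|_2$, which is the quantity to be bounded.

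Next I would apply the spatial product rule $\nabla(P_{\fuu}\exp Z_s)=(\nabla P_{\fuu})\exp Z_s+P_{\fuu}\,\nabla(\exp Z_s)$ and split by the triangle inequality, pulling out the $C(D)$-norms of the coefficient factors:
\[
\|\partial^{\fuu}\uu^0_s\|_2\le\|\nabla P_{\fuu}\|_{C(D)}\,\|\exp Z_s\|_2+\|P_{\fuu}\|_{C(D)}\,\|\nabla\exp Z_s\|_2 ,
\]
after which both $\|\exp Z_s\|_2$ and $\|\nabla\exp Z_s\|_2$ are dominated by $\|\exp Z_s\|_{H^1(D)}$. The zeroth-order factor is handled by the product bound $\|P_{\fuu}\|_{C(D)}\le\prod_{j:\fu_j=1}b_j$ with $b_j=\sqrt{\mu_j}\|\xi_j\|_{C(D)}$. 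The key algebraic point is that \eqref{eqn:cj} is designed precisely so that $b_j\le\tfrac12 C_j$ for \emph{every} $j$ (for $j\le N_0$ because $C_j\ge 2b_j$, and for $j>N_0$ because $b_j\le\tfrac12=\tfrac12 C_j$). Consequently $\prod_{j:\fu_j=1}b_j\le 2^{-|\fuu|}\prod_{j\ge1}C_j=C^*2^{-|\fuu|}$, which supplies the $C^*$ contribution.

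For the first-order factor I would differentiate the product $P_{\fuu}$ by the Leibniz rule, producing a sum of exactly $|\fuu|$ terms in each of which a single factor $\sqrt{\mu_m}\xi_m$ is replaced by $\sqrt{\mu_m}\nabla\xi_m$. Each summand then carries the derivative factor $\sqrt{\mu_m}\|\nabla\xi_m\|_{C(D)}$, which I would bound uniformly in $m$ using \eqref{eigen_est} and the eigenvalue decay $\mu_m\lesssim m^{-k-1}$ with $k>2$ (so that $\sqrt{\mu_m}\|\nabla\xi_m\|_{C(D)}$ is bounded, indeed tending to $0$), times the remaining product $\prod_{j\neq m}b_j$, which again yields a factor $2^{-(|\fuu|-1)}$ via $b_j\le\tfrac12 C_j$. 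Summing the $|\fuu|$ terms and accounting for the constant relating the componentwise and Euclidean gradient norms produces the $2\sqrt2\,|\fuu|\,2^{-|\fuu|}$ contribution. Adding the two pieces, and using $|\fuu|\ge1$ to write $C^*2^{-|\fuu|}\le C^*|\fuu|2^{-|\fuu|}$, gives the claimed estimate \eqref{mid_init_est}.

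I expect the main obstacle to be the bookkeeping of constants in the first-order term: one must verify that replacing a factor $\xi_m$ by its spatial gradient $\nabla\xi_m$ does not spoil the geometric factor $2^{-|\fuu|}$, and that the $|\fuu|$ Leibniz terms are tracked \emph{explicitly} so as to generate the linear-in-$|\fuu|$ prefactor rather than being swallowed into a generic constant. The delicate part is arranging the bounds so that the first-order contribution remains free of $C^*$ and the second-order (value) contribution remains free of the derivative constant, so that the two add to $C^*+2\sqrt2$ instead of multiplying; this rests on the uniform boundedness of $\sqrt{\mu_m}\|\nabla\xi_m\|_{C(D)}$ from \eqref{eigen_est} and on the relation $b_j\le\tfrac12 C_j$ holding for all indices, including the finitely many $j\le N_0$ where $b_j$ itself may exceed $\tfrac12$.
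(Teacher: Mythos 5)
Your reduction to $\partial^{\fuu}\uu^0_s=\nabla^{\perp}\bigl(\prod_{j\in\fuu}\sqrt{\mu_j}\xi_j\,\exp Z_s\bigr)$, the spatial product-rule split into a ``value'' term and a ``derivative'' term, and the bound $\prod_{j\in\fuu}b_j\le C^*2^{-|\fuu|}$ for the value term all coincide with the paper's proof. The gap is in the derivative term. You claim that, after applying Leibniz to $P_{\fuu}$, the residual product $\prod_{j\in\fuu,\,j\neq m}b_j$ is bounded by $2^{-(|\fuu|-1)}$ ``via $b_j\le\tfrac12C_j$''. But $b_j\le\tfrac12C_j$ only gives $\prod_{j\in\fuu,\,j\neq m}b_j\le 2^{-(|\fuu|-1)}\prod_{j\in\fuu,\,j\neq m}C_j\le C^*2^{-(|\fuu|-1)}$, and the product of the $C_j$'s cannot be discarded: by \eqref{eqn:cj} one has $C_j=\max\{2b_j,1\}\ge1$, with $C_j>1$ exactly for those finitely many indices where $b_j>\tfrac12$ -- the situation the cutoff $N_0$ is there to accommodate. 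Hence your first-order contribution is of size $C^*\,|\fuu|\,2^{-|\fuu|}$ (times the uniform bound on $\sqrt{\mu_m}\|\nabla\xi_m\|_{C(D)}$), not $2\sqrt2\,|\fuu|\,2^{-|\fuu|}$, and the two pieces add to a constant of the form $C^*(1+c)$ rather than to $C^*+2\sqrt2$ as stated in \eqref{mid_init_est}. You flag precisely this as ``the delicate part,'' but the mechanism you propose to keep $C^*$ out of the derivative term does not do so.

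The paper closes this differently: in the derivative term it never invokes $b_j\le\tfrac12C_j$. Instead it uses the polynomial decay from Lemma~\ref{new_ass} and \eqref{eigen_est}, namely $b_j\lesssim j^{-k/2}$ and $\sqrt{\mu_i}\,\|\partial_x\xi_i\|_{C(D)}\lesssim i^{-(k-1)/2}$, and then exploits that the indices in $\fuu$ are \emph{distinct} positive integers, so all but at most one of them are $\ge2$; since $k>2$, each such index contributes a factor at most $2^{-1/2}$, and collecting these over the $|\fuu|$ Leibniz terms yields exactly the prefactor $2\sqrt2\,|\fuu|\,2^{-|\fuu|}$ with no $C^*$. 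If all you need is the qualitative conclusion -- a bound $c\,|\fuu|!\,$-free, of the form $c\,|\fuu|\,2^{-|\fuu|}\|\exp Z_s\|_{H^1(D)}$ with $c$ independent of $\fuu$ and $s$, which is what Theorem~\ref{reg_estimate} actually consumes -- your argument survives with the larger constant; but to obtain the constant in \eqref{mid_init_est} you must replace the $b_j\le\tfrac12C_j$ step in the derivative term by this decay-based argument. (Be aware that even the paper's derivation of the clean number $2\sqrt2$ silently absorbs the implied constants of Lemma~\ref{new_ass} and \eqref{eigen_est}.)
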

\begin{proof}
By the definition of $L^2(D)$-projection $\uu^0_{s,h}(\yy)$, for each $\yy \in U_{\bb}$, we have
\[(\uu^0_{s,h}(\yy),\vv_h)=(\uu^0_{s}(\yy),\vv_h),\quad\forall\vv_h\in V_h.\]
Taking $\partial^{\fuu}$ on both sides
and then setting $\vv_h=\partial^{\fuu}\uu^0_{s,h}\in V_h$ yield that for each $\yy\in U_{\bb}$,
\[\left\|\partial^{\fuu}\uu^0_{s,h}(\yy)\right\|^2_{L^2(D)}=\left(\partial^{\fuu}\uu^0_{s}(\yy),\partial^{\fuu}\uu^0_{s,h}(\yy)\right)
\leq\left\|\partial^{\fuu}\uu^0_{s}(\yy)\right\|_{L^2(D)}\left\|\partial^{\fuu}\uu^0_{s,h}(\yy)\right\|_{L^2(D)}.\]
Hence, we have
\begin{equation}\label{deriv_initial}
  \|\partial^{\fuu}\uu^0_{s,h}(\yy)\|_{L^2(D)}\leq\|\partial^{\fuu}\uu^0_{s}(\yy)\|_{L^2(D)}.
\end{equation}
With the notation $j\in\fuu$ when $\fuu_j=1$, by the product rule,
\begin{align*}
&\|\partial^{\fuu}\uu^0_{s}\|_{L^2(D)}
 =\|\partial^{\fuu}\partial_x\exp Z_s\|_{L^2(D)}=\bigg\|\partial_x\bigg[\bigg(\prod_{j\in\fuu}\sqrt{\mu_j}\xi_j\bigg)\exp Z_s\bigg]\bigg\|_{L^2(D)}\\
\leq&\bigg\|\bigg(\prod_{j\in\fuu}\sqrt{\mu_j}\xi_j\bigg)\partial_x\exp Z_s\bigg\|_{L^2(D)}\!\!\!+\bigg\|\sum_{i\in\fuu}\sqrt{\mu_i}
\partial_x\xi_i\bigg(\prod_{j\in\fuu,\,j\neq i}\sqrt{\mu_j}\xi_j\bigg)\exp Z_s\bigg\|_{L^2(D)}\!\!\!\\
=:&{\rm{I}}+{\rm{II}}.
\end{align*}
By \eqref{eqn:cj} and \eqref{eqn:cc}, the following inequality holds
\[
\prod_{j\in\fuu}b_j=\prod_{j\in\fuu,\,j>N_0}b_j\times \prod_{j\in\fuu,\,j\leq N_0}b_j  \leq \prod_{j\in\fuu,\,j>N_0}\frac{1}{2}\times \prod_{j\in\fuu,\,j\leq N_0}\frac{C_j}{2} \leq\frac{C^*}{2^{|\fuu|}}.
\]
Then we can bound the first term $\rm{I}$ by
\begin{equation*}
{\rm{I}}\leq\bigg(\prod_{j\in\fuu}b_j\bigg)\|\partial_x\exp Z_s\|_{L^2(D)}\leq \frac{C^*}{2^{|\fuu|}}\|\partial_x\exp Z_s\|_{L^2(D)}.
\end{equation*}
For the term ${\rm{II}}$, we discuss the cases $|\fuu|\geq2$ and $|\fuu|=1$ separately.
When $|\fuu|\geq2$, the estimates \eqref{eqn:eig-decay} and \eqref{eigen_est} imply
\begin{equation*}
\begin{aligned}
{\rm{II}}
&\leq \|\exp Z_s\|_{L^2(D)}\sum_{i\in\fuu}\sqrt{\mu_i}\|\partial_x\xi_i\|_{\infty}\bigg(\prod_{j\in\fuu,\,j\neq i}b_j\bigg)\\
&\leq \|\exp Z_s\|_{L^2(D)}\sum_{i\in\fuu}i^{-\frac{k}{2}+1}\bigg(\prod_{j\in\fuu,\,j\neq i}j^{-\frac{k}{2}+\frac{1}{2}}\bigg).
\end{aligned}
\end{equation*}
Plugging the following identity
\begin{align*}
\sum_{i\in\fuu}i^{-\frac{k}{2}+1}\bigg(\prod_{j\in\fuu,\,j\neq i}j^{-\frac{k}{2}+\frac{1}{2}}\bigg)
&=\sum_{i\in\fuu}i^{-\frac{k}{2}+1}\bigg(\prod_{j\in\fuu,\,j\neq i}j^{-\frac{k}{2}+1-\frac{1}{2}}\bigg)\\
&=\sum_{i\in\fuu}\bigg(\prod_{j\in\fuu}j^{-\frac{k}{2}+1}\bigg)\bigg(\prod_{j\in\fuu,\,j\neq i}j^{-\frac{1}{2}}\bigg)
\end{align*}
into the last inequality leads to
\begin{align*}
{\rm{II}}
& \leq\|\exp Z_s\|_{L^2(D)}\sum_{i\in\fuu}\bigg(\prod_{j\in\fuu}j^{-\frac{k}{2}+1}\bigg)\bigg(\prod_{j\in\fuu,\,j\neq i}j^{-\frac{1}{2}}\bigg)\\
&=\|\exp Z_s\|_{L^2(D)}\bigg(\prod_{j\in\fuu}j^{-\frac{k-2}{2}}\bigg)\sum_{i\in\fuu}\bigg(\prod_{j\in\fuu,\,j\neq i}j^{-\frac{1}{2}}\bigg).
\end{align*}
Upon noting $k>3$, we deduce
\begin{align*}
{\rm{II}}
&\leq\|\exp Z_s\|_{L^2(D)}\frac{1}{\left(2^{\frac{k-2}{2}}\right)^{|\fuu|-1}}\sum_{i\in\fuu}\bigg(\prod_{j\in\fuu,\,j\neq i}j^{-\frac{1}{2}}\bigg)\nonumber\\
&\leq\|\exp Z_s\|_{L^2(D)}\frac{1}{(\sqrt{2})^{|\fuu|-1}}\sum_{i\in\fuu}\frac{1}{(\sqrt{2})^{|\fuu|-2}}=\|\exp Z_s\|_{L^2(D)}\frac{2\sqrt{2}|\fuu|}{2^{|\fuu|}}.
\end{align*}
When $|\fuu|=1$, from the estimates \eqref{eqn:eig-decay} and \eqref{eigen_est}, we derive for some $r\in\mathbb{N}$ that
\begin{align*}
{\rm{II}}
&=\|\partial_x(\sqrt{\mu_r}\xi_r)\exp Z_s\|_{L^2(D)}\leq \|\exp Z_s\|_{L^2(D)}\sqrt{\mu_r}\|\partial_x\xi_r\|_{L^\infty(D)}\\
&\leq\|\exp Z_s\|_{L^2(D)}r^{-\frac{k-2}{2}}\leq 2\sqrt{2}|\fuu|2^{-|\fuu|}\|\exp Z_s\|_{L^2(D)}.
\end{align*}
Combining the preceding estimates completes the proof of the lemma.
\end{proof}

The following discussion requires a smallness of the initial log-normal random field $\exp Z(\xx,\yy)$
for each realization $\yy\in U_{\bb}$: there exists some $\varepsilon>0$ such that
\begin{align}\label{eqn:smallness}
\sup_{\yy\in U_{\bb}}\|\exp Z(\cdot,\yy)\|_{H^1(D)}<\varepsilon.
\end{align}
This condition on $\uu^0$ can be found in the theory of incompressible fluid flow problems;
see, e.g., \cite{Malek, Malek_NS}, where the existence of global strong solutions was discussed
under a smallness condition on $\uu^0$. Note that $\exp Z_s$ and $\nabla\exp Z_s=\nabla Z_s\exp Z_s$
coincide with the evaluations of $\exp Z$ and $\nabla\exp Z=\nabla Z\exp Z$ at $\yy_s=(y_1,\cdots,
y_s,0,0,\cdots)$, which belongs to $U_{\bb}$ for any $(y_1,\cdots,y_s)\in \R^s$. Hence,
Assumption \eqref{eqn:smallness} implies
\begin{equation}\label{trunc_small}
\sup_{\yy_s\in\R^s}\|\exp Z_s(\cdot,\yy_s)\|_{H^1(D)}<\varepsilon,\quad \forall s\in \mathbb{N}.
\end{equation}

{
\begin{remark}
The small data assumption \eqref{trunc_small} plays a vital role in the analysis. It is frequently used in the study of incompressible fluid flow problems both in theory and numerics \cite{Malek,Malek_NS}, including the stationary Navier-Stokes equations. Recently, in a similar context, it was used by Cohen et al \cite{holo_2018} to establish the holomorphic dependence of the velocity and the pressure of stationary Navier-Stokes equations on the domain, with the parameterized families of domain mappings over the uniform distribution; see \cite[equation (2.9)]{holo_2018} and the references therein. The problem under consideration is non-stationary and involves nonlinear random initial conditions, and thus it is expected to be even more challenging.

Next we briefly comment on the validity of the assumption. Under the definition of the admissible parameter set $U_b$, each realization of the initial data is bounded, and thus
\begin{align*}
 \|\exp Z(\cdot,\mathrm{y})\|_{H^1(D)}<\infty,\quad \forall \mathrm{y}\in U_b.
\end{align*}
Thus, it is reasonable to make the smallness assumption by working with a smaller admissible parameter set $V_{\boldsymbol{b}}=\{\boldsymbol{y}\in U_{\boldsymbol{b}}:\|\exp Z(\cdot,\mathrm{y})\|_{H^1(D)}<\varepsilon\}$ of $U_{\boldsymbol{b}}$. Even though the set  $V_{\boldsymbol{b}}$ is not of full measure, by appropriately adjusting the mean and covariance of the Gaussian random field $Z(\cdot,\cdot)$, one can make the probability of extracting $\boldsymbol{y}$ that satisfies the smallness assumption \eqref{trunc_small} sufficiently high. Thus, we can make the measure of $V_{\boldsymbol{b}}$ close to 1 for the specific choice of $Z(\cdot,\cdot)$. The subsequent analysis assumes the smallness condition \eqref{trunc_small}, and we leave a full investigation of the condition and its relaxation to future works.
\end{remark}}

Now we can state an \textit{a priori} estimate for $\|\partial^{\fuu}\uu_{s,h}^J(\yy)\|_{L^2(D)}$ for each $\yy\in U_{\bb}$.
\begin{theorem}\label{reg_estimate}
Let $\partial^{\fuu}$ be a mixed first derivative with respect to $\yy$ and let Assumption \ref{ass_indiv} hold. Then there exists $\varepsilon:=\varepsilon(\tau)>0$, proportional to $\tau^{\frac{1}{2}}$, such that if the smallness assumption \eqref{eqn:smallness} holds, then for any truncation dimension $s\in\mathbb{N}$, time level $J=1,2,\cdots,\ell$ and mesh size $h>0$, there holds with $C^*>0$ defined in \eqref{eqn:cc},
\[\|\partial^{\fuu}\uu_{s,h}^J(\yy)\|_{L^2(D)}\leq (2\sqrt{2}+2C^*)|\fuu|!2^{|\fuu|}\|\exp Z_s(\yy)\|_{H^1(D)},\quad\forall \yy\in U_{\bb}.\]
\end{theorem}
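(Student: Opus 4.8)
The plan is to prove the estimate by induction on the differentiation order $n=|\fuu|$, and in fact to establish the stronger energy bound controlling both $\sup_{0\le J\le\ell}\|\partial^{\fuu}\uu^J_{s,h}(\yy)\|_2$ and the accumulated dissipation $\Delta t\sum_{k}\|\nabla\partial^{\fuu}\uu^k_{s,h}(\yy)\|_2^2$ by the same right-hand side; the gradient information is indispensable because the convective nonlinearity couples lower-order derivatives through $L^4$ norms, which by Ladyzhenskaya's inequality (Lemma \ref{2d_est}) require gradient control. I would first record that for fixed $s$ and $h$ the map $\yy\mapsto\uu^{j+1}_{s,h}(\yy)$ is smooth, so that $\partial^{\fuu}$ is well defined, and then differentiate the discrete scheme \eqref{trunc_SIBE}. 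The time-difference and viscous terms commute with $\partial^{\fuu}$; the only subtle term is the quadratic form, for which the Leibniz rule (with all multinomial coefficients equal to one since $\fu_j\in\{0,1\}$ and $\vv_h$ is independent of $\yy$) gives $\sum_{\fmm\le\fuu}B[\partial^{\fmm}\uu^{j+1}_{s,h},\partial^{\fuu-\fmm}\uu^{j+1}_{s,h},\vv_h]$.

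Testing the differentiated identity with $\vv_h=\partial^{\fuu}\uu^{j+1}_{s,h}=:w^{j+1}$, the linear part reproduces the discrete energy identity \eqref{4_2}. Among the convective contributions, the term $\fmm=\boldsymbol 0$ vanishes by the skew-symmetry $B[\uu,w,w]=0$; the top term $\fmm=\fuu$, namely $B[w^{j+1},\uu^{j+1}_{s,h},w^{j+1}]$, is treated exactly as in \eqref{4_3} and yields $\tfrac14\|\uu^{j+1}_{s,h}\|_4^4\|w^{j+1}\|_2^2+\tfrac34\|\nabla w^{j+1}\|_2^2$, the gradient part being absorbed on the left. The cross terms with $\boldsymbol 0\ne\fmm\ne\fuu$ and $\fmm\le\fuu$ are the heart of the matter: moving the gradient onto $w^{j+1}$ by skew-symmetry and applying Ladyzhenskaya to both lower-order factors, each is bounded by $\|\partial^{\fmm}\uu^{j+1}_{s,h}\|_2^{1/2}\|\nabla\partial^{\fmm}\uu^{j+1}_{s,h}\|_2^{1/2}\|\partial^{\fuu-\fmm}\uu^{j+1}_{s,h}\|_2^{1/2}\|\nabla\partial^{\fuu-\fmm}\uu^{j+1}_{s,h}\|_2^{1/2}\|\nabla w^{j+1}\|_2$. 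Summing in time with weight $\Delta t$, using Cauchy--Schwarz/Minkowski in the discrete time variable, and inserting the inductive energy bounds for the strictly lower orders $|\fmm|,|\fuu-\fmm|<n$, these cross terms are controlled by a convolution $\|\exp Z_s\|_{H^1}\sum_{\fmm}A_{|\fmm|}A_{|\fuu-\fmm|}$, where $A_m$ denotes the constant at order $m$.

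Summing the tested identity over the time steps and telescoping produces a discrete inequality for $\|w^k\|_2^2$ of exactly the form required by the discrete Gronwall lemma (Lemma \ref{gron}); its hypothesis is the analogue of \eqref{eq:assumption-temporalstep} for the truncated solution, which holds because Lemma \ref{L4} and the smallness \eqref{trunc_small} give $\Delta t\sum_j\|\uu^{j+1}_{s,h}\|_4^4\le 2\varepsilon^4$, and this same bound makes the Gronwall exponential uniformly bounded in $J$, $h$ and $s$. The starting value $\|w^0\|_2$ is estimated through \eqref{deriv_initial} and \eqref{mid_init_est}. What remains is to resolve the recursion for the constants: writing $A_n=A_{|\fuu|}$, the above yields $A_n\lesssim A_n^{\mathrm{init}}+\|\exp Z_s\|_{H^1}\sum_{\ell=1}^{n-1}\binom{n}{\ell}A_{\ell}A_{n-\ell}$, a Catalan-type convolution. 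Choosing $\varepsilon=\varepsilon(\Delta t)$ proportional to $(\Delta t)^{1/2}$ small enough that $\|\exp Z_s\|_{H^1}<\varepsilon$ forces the convolution ratio below a fixed threshold, the normalized constants $A_n/n!$ grow at most geometrically (the combinatorial weight being governed by the identity $\sum_{\ell=1}^{n-1}\binom{n}{\ell}\ell!(n-\ell)!=(n-1)\,n!$ together with the Catalan bound $\le 4^n$); the reduction $\prod_{j\in\fuu}b_j\le C^*2^{-|\fuu|}$ from \eqref{eqn:cj}--\eqref{eqn:cc}, already exploited in \eqref{mid_init_est}, then converts the geometric factor into the claimed $(2\sqrt2+2C^*)|\fuu|!\,2^{|\fuu|}$.

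The principal obstacle is the cross-term convolution: in contrast to the linear lognormal-diffusion case, the quadratic convection makes the recursion self-referential, coupling every pair of complementary sub-multi-indices, so a crude Cauchy--Schwarz over the $2^{|\fuu|}$ summands loses the sharp factorial. The decisive ingredient is the smallness assumption \eqref{eqn:smallness}, which damps this quadratic feedback by a factor $\|\exp Z_s\|_{H^1}<\varepsilon$ at every recursion step uniformly in $n$; keeping the Gronwall exponent, the recursion threshold and all constants independent of the time level $J$, the mesh size $h$ and the truncation dimension $s$ is the most delicate bookkeeping.
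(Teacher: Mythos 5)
Your proposal follows essentially the same route as the paper's proof: differentiate the discrete scheme in $\yy$, test with $\partial^{\fuu}\uu^{j+1}_{s,h}$, control the Leibniz cross terms through the accumulated dissipation quantities $S^J_{\fmm}$ via skew-symmetry and Ladyzhenskaya's inequality, and close the resulting quadratic recursion by induction on $|\fuu|$ using the smallness $\varepsilon\propto(\Delta t)^{1/2}$ together with \eqref{deriv_initial} and \eqref{mid_init_est}. The only local differences are that you absorb the $\fmm=\fuu$ term $B[w^{j+1},\uu^{j+1}_{s,h},w^{j+1}]$ via the $L^4$ bound and discrete Gronwall (as in \eqref{4_3}--\eqref{4_5}) where the paper bounds it by $A\,S^J_{\boldsymbol{0}}(S^J_{\fuu})^2$ and absorbs it with the same smallness, and your combinatorial bookkeeping invokes $\sum_{\ell}\binom{n}{\ell}\ell!(n-\ell)!=(n-1)\,n!$ where the paper uses $a!\,b!\le(a+b-1)!$ and $\sum_i\binom{|\fuu|}{i}\le 2^{|\fuu|}$ --- both yielding the claimed $|\fuu|!\,2^{|\fuu|}$ growth.
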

\begin{proof}
In the proof, we suppress $\yy$ from $\uu^j_{s,h}(\yy)$ etc. since it is fixed. When $|\fuu|=0$, the desired inequality
can be verified directly. Hence we discuss only the case $|\fuu|\geq1$.
By taking $\partial^{\fuu}$ in the discrete scheme
\eqref{trunc_SIBE} and the product rule, we obtain
\begin{align*}
\tau^{-1} (\partial^{\fuu}\uu^{j+1}_{s,h},&\vv_h)-\tau^{-1}(\partial^{\fuu}\uu^j_{s,h},\vv_h)+(\nabla\partial^{\fuu}\uu^{j+1}_{s,h},\nabla\vv_h)\\
&+\sum_{\fmm\preceq\fuu}{\fuu\choose\fmm}B[\partial^{\fuu-\fmm}\uu^{j+1}_{s,h},\partial^{\fmm}\uu^{j+1}_{s,h},\vv_h]=0,\quad \forall\vv_h\in V_h,
\end{align*}
where $\fmm\preceq\fuu$ denotes $\fm_j\leq\fu_j$ for all $j\geq1$, and
$\fuu-\fmm$ is a multi-index $(\fu_j-\fm_j)_{j\geq1}$ and ${\fuu\choose\fmm}\defeq\Pi_{j\geq1}{\fu_j\choose \fm_j}$.
Upon splitting the summation with $\fmm\neq \fuu$ (i.e., $\precneqq$), we obtain
\begin{align*}
(\nabla\partial^{\fuu}\uu^{j+1}_{s,h},&\nabla\vv_h)+B[\uu^{j+1}_{s,h},\partial^{\fuu}\uu^{j+1}_{s,h},\vv_h]+ \tau ^{-1}(\partial^{\fuu}\uu^{j+1}_{s,h},\vv_h)\\
  &-\tau^{-1}(\partial^{\fuu}\uu^j_{s,h},\vv_h)=-\sum_{\substack{\fmm\precneqq\fuu}}{\fuu\choose\fmm}B[\partial^{\fuu-\fmm}\uu^{j+1}_{s,h},\partial^{\fmm}\uu^{j+1}_{s,h},\vv_h],\quad \forall \vv_h\in V_h.
\end{align*}
Taking $\vv_h{\color{black}=}2\tau\partial^{\fuu}\uu^{j+1}_{s,h}$ in the identity yields
\begin{align}
&2\tau\|\nabla\partial^{\fuu}\uu^{j+1}_{s,h}\|^2_{L^2(D)}+\|\partial^{\fuu}\uu^{j+1}_{s,h}\|^2_{L^2(D)} -\|\partial^{\fuu}\uu^j_{s,h}\|^2_{L^2(D)} +\|\partial^{\fuu}\uu^{j+1}_{s,h}-\partial^{\fuu}\uu^j_{s,h}\|^2_{L^2(D)}
\nonumber\\
=&-2\tau\sum_{\fmm\precneqq\fuu}{\fuu\choose\fmm}B\left[\partial^{\fuu-\fmm}\uu^{j+1}_{s,h},\partial^{\fmm}\uu^{j+1}_{s,h}, \partial^{\fuu}\uu^{j+1}_{s,h}\right].\label{eqn:11111}
\end{align}
The skew symmetry of the trilinear form $B[\cdot,\cdot,\cdot]$ implies
\begin{align*}
&\big|B[\partial^{\fuu-\fmm}\uu^{j+1}_{s,h}, \partial^{\fmm}\uu^{j+1}_{s,h},\partial^{\fuu}\uu^{j+1}_{s,h}]\big|\\
\leq&\tfrac{1}{2}
\|\partial^{\fuu-\fmm}\uu^{j+1}_{s,h}\|_{L^4(D)} \|\nabla\partial^{\fmm}\uu^{j+1}_{s,h}\|_{L^2(D)} \|\partial^{\fuu}\uu^{j+1}_{s,h}\|_{L^4(D)} \\
&+ \tfrac{1}{2}\|\partial^{\fuu-\fmm}\uu^{j+1}_{s,h}\|_{L^4(D)}\|\nabla\partial^{\fuu}\uu^{j+1}_{s,h}\|_{L^2(D)} \|\partial^{\fmm}\uu^{j+1}_{s,h}\|_{L^4(D)}.
\end{align*}
By bounding the terms $\|\partial^{\fuu-\fmm}\uu^{j+1}_{s,h}\|_{L^4(D)}$,
$\|\partial^{\fuu}\uu^{j+1}_{s,h}\|_{L^4(D)}$ and $\|\partial^{\fmm}\uu^{j+1}_{s,h}\|_{L^4(D)}$
using Ladyzhenskaya's inequality in Lemma \ref{2d_est}
and then applying Poincar\'e's inequality (with Poincar\'{e} constant $C_p$), we deduce
\begin{align*}
&\Big|-\sum_{\fmm\precneqq\fuu}{\fuu\choose\fmm}B\left[\partial^{\fuu-\fmm}\uu^{j+1}_{s,h}, \partial^{\fmm}\uu^{j+1}_{s,h},\partial^{\fuu}\uu^{j+1}_{s,h}\right]\Big|\\
\leq &\sqrt{2}C_p\sum_{\fmm\precneqq\fuu}{\fuu\choose\fmm}  \|\nabla\partial^{\fuu-\fmm}\uu^{j+1}_{s,h}\|_{L^2(D)} \|\nabla\partial^{\fmm}\uu^{j+1}_{s,h}\|_{L^2(D)}   \|\nabla\partial^{\fuu}\uu^{j+1}_{s,h}\|_{L^2(D)}.
\end{align*}
Together with \eqref{eqn:11111}, we arrive at
\begin{align*}
&2\tau\|\nabla\partial^{\fuu}\uu^{j+1}_{s,h}\|^2_{L^2(D)}+\|\partial^{\fuu}\uu^{j+1}_{s,h}\|^2_{L^2(D)} -\|\partial^{\fuu}\uu^j_{s,h}\|^2_{L^2(D)}+\|\partial^{\fuu}\uu^{j+1}_{s,h}-\partial^{\fuu}\uu^j_{s,h}\|^2_{L^2(D)}\\
&\leq 2\sqrt{2}C_p\tau\sum_{\fmm\precneqq\fuu}{\fuu\choose\fmm}  \|\nabla\partial^{\fuu-\fmm}\uu^{j+1}_{s,h}\|_{L^2(D)}  \|\nabla\partial^{\fmm}\uu^{j+1}_{s,h}\|_{L^2(D)}  \|\nabla\partial^{\fuu}\uu^{j+1}_{s,h}\|_{L^2(D)}.
\end{align*}
For fixed $J=2,\cdots,\ell$, let
$S^J_{\fmm}{\color{black}=}(\Delta t\sum^{J-1}_{j=0}\|\nabla\partial^{\fmm}\uu^{j+1}_{s,h}\|^2_{L^2(D)})^{\frac{1}{2}}.$
Summing up from $j=0$ to $j=J-1$, using
Cauchy--Schwarz inequality and \eqref{deriv_initial} yield
\begin{align*}
\|\partial^{\fuu}&\uu^J_{s,h}\|^2_{L^2(D)}+2\left(S^J_{\fuu}\right)^2 \leq\|\partial^{\fuu}\uu^0_{s,h}\|^2_{L^2(D)}\\
&+2\sqrt{2}C_p\tau\sum_{\fmm\precneqq\fuu }{\fuu\choose\fmm}\sum^{J-1}_{j=0}\|\nabla\partial^{\fuu-\fmm}\uu^{j+1}_{s,h}\|_{L^2(D)}\|\nabla\partial^{\fmm}\uu^{j+1}_{s,h}\|_{L^2(D)}   \|\nabla\partial^{\fuu}\uu^{j+1}_{s,h}\|_{L^2(D)}\\
&\leq\frac{2\sqrt{2}C_p}{\tau^{\frac{1}{2}}} \sum_{\fmm\precneqq\fuu}{\fuu\choose\fmm}S^J_{\fuu-\fmm}S^J_{\fmm}S^J_{\fuu} + \|\partial^{\fuu}\uu^0_{s,h}\|^2_{L^2(D)}.
\end{align*}
Note that ${\fuu\choose\fmm}=1$, since $\fuu$ is a mixed first derivative, i.e. $\mathfrak{u}_j\in\{0,1\}$.
Then with $A:=2\sqrt{2}C_p\tau^{-\frac{1}{2}}$, we have
\begin{equation}\label{mid_mid}
\|\partial^{\fuu}\uu^J_{s,h}\|^2_{L^2(D)}+2(S^J_{\fuu})^2\leq  A \sum_{\fmm\precneqq\fuu}S^J_{\fuu-\fmm}S^J_{\fmm}S^J_{\fuu}+\|\partial^{\fuu}\uu^0_{s}\|^2_{L^2(D)}.
\end{equation}
To estimate $S^J_{\fuu}$ when $|\fuu|=0 $, by taking $\vv_h=2\tau\uu^{j+1}_{s,h}$ in \eqref{trunc_SIBE}
and Young's inequality,
\begin{align*}
2\|\uu^{j+1}_{s,h}\|^2_{L^2(D)}+2\tau\|\nabla\uu^{j+1}_{s,h}\|^2_{L^2(D)}
=(\uu^j_{s,h},\uu^{j+1}_{s,h})\leq \|\uu^j_{s,h}\|^2_{L^2(D)}+\|\uu^{j+1}_{s,h}\|^2_{L^2(D)},
\end{align*}
which implies
$$\|\uu^{j+1}_{s,h}\|^2_{L^2(D)}+2\tau\|\nabla\uu^{j+1}_{s,h}\|^2_{L^2(D)}\leq \|\uu^j_{s,h}\|^2_{L^2(D)}.$$
Summing the above inequality from $j=0$ to $j=J-1$ yields
\begin{align}
2(S^J_{\boldsymbol{0}})^2&\leq\|\uu^{J}_{s,h}\|^2_{L^2(D)}+2\tau\sum^{J-1}_{j=0}\|\nabla\uu^{j+1}_{s,h}\|^2_{L^2(D)}\nonumber\\
 &\leq \|\uu^0_{s,h}\|^2_{L^2(D)}\leq \|\uu^0_{s}\|_{L^2(D)}^2\leq\|\exp Z_s\|_{H^1(D)}.\label{case_1}
\end{align}
Now we choose $\varepsilon:=\frac{1}{4A(C^*+2\sqrt{2})}$,
which is proportional to $\tau^{1/2}$, and assume that the smallness condition \eqref{eqn:smallness} holds.
Then we shall prove the claim
\begin{equation}\label{desire_mid}
S^J_{\fuu}\leq |\fuu|!2^{|\fuu|}\|\partial^{\fuu}\uu^0_{s}\|_{L^2(D)}\quad\forall J\in\mathbb{N},
\end{equation}
by mathematical induction on $|\fuu|$.
For $|\fuu|=1$, from \eqref{mid_mid} and \eqref{case_1}, we have
\[
2(S^J_{\fuu})^2\leq A S^J_{\boldsymbol{0}}(S^J_{\fuu})^2+\|\partial^{\fuu}\uu^0_{s}\|^2_{L^2(D)}\leq A\|\exp Z_s\|_{H^1(D)}(S^J_{\fuu})^2+\|\partial^{\fuu}\uu^0_{s}\|^2_{L^2(D)}.
\]
This, the smallness assumption \eqref{eqn:smallness} and the choice of $\varepsilon$ imply the inequality \eqref{desire_mid} when $|\fuu|=1$.
Now suppose that \eqref{desire_mid} holds for all $\fuu$ with $1\leq|\fuu|\leq n-1$, $n\geq2$. For any given multi-index $\fuu$ with $|\fuu|=n$, it follows from \eqref{mid_mid} and \eqref{case_1} that
\[
2(S^J_{\fuu})^2\leq A\sum_{\fmm\precneqq\fuu,\,\fmm\neq\boldsymbol{0}}S^J_{\fuu-\fmm}S^J_{\fmm}S^J_{\fuu}+\frac{A}{2}\|\exp Z_s\|_{H^1(D)}(S^J_{\fuu})^2+\|\partial^{\fuu}\uu^0_{s}\|^2_{L^2(D)}.
\]
By the induction hypothesis, \eqref{mid_init_est} and the smallness assumption \eqref{eqn:smallness}, we have
\begin{align*}
\frac{(S^J_{\fuu})^2}{2}
&\leq A\sum^{|\fuu|-1}_{i=1}\sum_{|\fmm|=i}\|\partial^{\fuu}\uu^0_{s}\|^2_{L^2(D)} |\fuu-\fmm|!2^{|\fuu-\fmm|}|\fmm|!2^{|\fmm|}S^J_{\fuu}+\|\partial^{\fuu}\uu^0_{s}\|^2_{L^2(D)}\\
&\leq A\|\partial^{\fuu}\uu^0_{s}\|^2_{L^2(D)}(|\fuu|-1)!2^{|\fuu|}\sum^{|\fuu|-1}_{i=1}{|\fuu|\choose i}S^J_{\fuu}+\|\partial^{\fuu}\uu^0_{s}\|^2_{L^2(D)}\\
&\leq A\|\partial^{\fuu}\uu^0_{s}\|^2_{L^2(D)}(|\fuu|-1)!2^{|\fuu|}(2^{|\fuu|}-2)S^J_{\fuu}+\|\partial^{\fuu}\uu^0_{s}\|^2_{L^2(D)},\\
&\leq A(C^*+2\sqrt{2})\|\exp Z_s\|_{L^2(D)}\|\partial^{\fuu}\uu^0_{s}\|_{L^2(D)}|\fuu|!2^{|\fuu|}S^J_{\fuu}+\|\partial^{\fuu}\uu^0_{s}\|^2_{L^2(D)}\\
& \leq \tfrac{1}{2}\|\partial^{\fuu}\uu^0_{s}\|_{L^2(D)}|\fuu|!2^{|\fuu|}S^J_{\fuu}+\|\partial^{\fuu}\uu^0_{s}\|^2_{L^2(D)},
\end{align*}
using the inequality $a!b!\leq (a+b-1)!$ for any $a,b\in\mathbb{N}$.
Solving the quadratic inequality for $S^J_{\fuu}$ yields the claim \eqref{desire_mid}:
\[S^J_{\fuu}\leq \tfrac{1}{2}\|\partial^{\fuu}\uu^0_{s}\|_{L^2(D)}|\fuu|!2^{|\fuu|}+\sqrt{2}\|\partial^{\fuu}\uu^0_{s}\|_{L^2(D)}\leq |\fuu|!2^{|\fuu|}\|\partial^{\fuu}\uu^0_{s}\|_{L^2(D)}.\]
Last, substituting \eqref{desire_mid} into \eqref{mid_mid} and repeating the above argument yield
\begin{equation*}
\|\partial^{\fuu}\uu^J_{s,h}\|^2_{L^2(D)}\leq\tfrac{1}{2}\|\partial^{\fuu}\uu^0_{s}\|^2_{L^2(D)}\left(|\fuu|!\right)^2 2^{2|\fuu|}+\|\partial^{\fuu}\uu^0_{s}\|^2_{L^2(D)}.
\end{equation*}
This, together with \eqref{mid_init_est}, implies
\begin{equation*}
\|\partial^{\fuu}\uu^J_{s,h}\|_{L^2(D)}\leq 2|\fuu|! 2^{|\fuu|} \|\partial^{\fuu}\uu^0_{s}\|_{L^2(D)}\leq (2\sqrt{2}+2C^*)|\fuu|! 2^{|\fuu|}\|\exp Z_s\|_{H^1(D)}.
\end{equation*}
This completes the proof of the theorem.
\end{proof}

\subsection{A function space setting in $\R^s$}

{{Most QMC methods in the literature are applied and analyzed with the class of functions defined over the unit hypercube. In this work, the suitable function spaces in the QMC analysis are weighted Sobolev spaces on $(0,1)^s$, which consist of functions with square-integrable mixed first derivatives. To be more precise, it is known from the classical theory that if the integrand lies in a suitable weighted Sobolev space, randomly-shifted lattice rules (RSLRs) can be constructed so as to (nearly)  achieve the optimal convergence rate $\mathcal{O}(n^{-1})$; see \cite{qmc_ref_1, qmc_ref_2, qmc_ref_3, qmc_ref_4, qmc_ref_5} and recent surveys \cite{qmc_ref_6, qmc_ref_7}.}} Since the integral in \eqref{QMC_term} is defined over $\R^s$, we have first transformed the domain to $(0,1)^s$ and obtain $F^J_h(\boldsymbol{\Phi}_s^{-1}(\cdot))$. However, the resulting integrand may be unbounded near the boundary $\partial(0,1)^s$,
and thus the standard QMC theory is not directly applicable. The function space setting for the integral of the type \eqref{QMC_term} has been studied in \cite{qmc_ref_8, qmc_ref_9, qmc_ref_10, qmc_ref_11, qmc_ref_14, qmc_ref_13}, and the optimal convergence rate has been obtained using RSLRs. The corresponding weighted Sobolev norm is defined by \cite{qmc_ref_14,qmc_ref_13}
\begin{equation}\label{wss}
\|F\|^2_{\mathcal{W}_s}:=\sum_{\fuu\subset[s]}\frac{1}{\gamma_{\fuu}}\int_{\R^{|\fuu|}}\!\!\!\!\Big(\int_{\R^{s-|\fuu|}}\!\!\!\!\!\!\!\!\partial^{\fuu}
F(\yy_{\fuu};\yy_{[s]\setminus\fuu})\prod_{j\in [s]\setminus\fuu}\!\!\!\!\phi(y_j)\dy_{[s]\setminus\fuu}\Big)^2\prod_{j\in\fuu}\psi^2_j(y_j)\dy_{\fuu},
\end{equation}
where $[s]=\{1,2,\cdots,s\}$, $\partial^{\fuu}F$ denotes the mixed first derivative with respect to each ``active'' variables $y_j$ for $j\in\fuu$, and $\yy_{[s]\setminus\fuu}$ is the ``inactive'' variable $y_j$, $j\notin\fuu$. The norm \eqref{wss} is called ``unanchored'' since the inactive variables are integrated out as opposed to being ``anchored'' at a certain fixed value, e.g., $0$.

For each $j\geq1$, the continuous weight function $\psi_j:\R\rightarrow\R_+$ in \eqref{wss} should be
properly chosen in order to handle the singularities for the active variables. The analysis in \cite{qmc_ref_13} indicates
that $\psi^2_j(y)$ should decay slower than the standard Gaussian density in \eqref{QMC_term} as $|y|\rightarrow\infty$: for some $a_j>0$, such that
\begin{equation}\label{active_weight}
\psi^2_j(y)=\exp(-2a_j|y|).
\end{equation}
Throughout, we assume that for some constants $0<a_{\min}< a_{\max}<\infty$,
\begin{equation}\label{a_cond}
a_{\min}<a_j\leq a_{\max},\quad j\in\mathbb{N}.
\end{equation}

For each index $\fuu\subset\mathbb{N}$ with finite cardinality $|\fuu|<\infty$, we associate
a weight parameter vector $\gamma_{\fuu}>0$ to indicate the relative importance
of the variables. We write $\boldsymbol{\gamma}=(\gamma_{\fuu})_{\fuu\subset\mathbb{N}}$ and
let $\gamma_{\emptyset}:=1$. In \cite{qmc_ref_14}, only ``products weights'' were considered,
i.e., there exists a sequence $\gamma_1\geq\gamma_2\geq\cdots>0$, where each $\gamma_j$ is
associated with an integral variable $y_j$ and let $\gamma_{\fuu}:=\prod_{j\in\fuu}\gamma_j$.
See also \cite{qmc_ref_13} for further generalizations.
The choice of the weight parameters $\gamma_{\fuu}$ is important to guarantee that the constant
in the QMC error bound does not grow exponentially as $s\rightarrow\infty$. In this work,
we employ the so-called ``product and order dependent weights'' (``POD'' weights) introduced
in \cite{KSS2012}. Specifically, we consider two different sequences $\Gamma_0=\Gamma_1=1$,
$\Gamma_2,\cdots, \Gamma_s$ and $\gamma_1\geq\gamma_2\geq\cdots\geq\gamma_s>0$ such that
$\gamma_{\fuu}:=\Gamma_{|\fuu|}\prod_{j\in\fuu}\gamma_j$.

\subsection{Error bound for randomly-shifted lattice rules}
To bound the error of the QMC integration, we define the worst-case error $e^{\rm{wor}}_{s,N}(\boldsymbol{z},\boldsymbol{\Delta})$ of
the shifted lattice rule \eqref{QMC_main}: for a generating vector $\boldsymbol{z}$ and a random shift $\boldsymbol{\Delta}$,
$e^{\rm{wor}}_{s,N}(\boldsymbol{z},\boldsymbol{\Delta}):=\sup_{\|F\|_{\mathcal{W}_s}\leq1}|I_s(F)-\mathcal{Q}_{s,N}(F;\boldsymbol{\Delta})|$,
where $I_s$ is the integral defined in \eqref{QMC_term}.
By the linearity of integration, we have
\begin{equation*}
|I_s(F^J_{s,h})-\mathcal{Q}_{s,N}(F^J_{s,h};\boldsymbol{\Delta})|\leq e^{\rm{wor}}_{s,N}(\boldsymbol{z},\boldsymbol{\Delta})\|F^J_{s,h}\|_{\mathcal{W}_s}.
\end{equation*}
In this work, we consider the root-mean-square error (RMSE) for QMC:
\begin{equation}\label{error_bound_2}
{\rm RMSE}_{\rm qmc}(N)\leq e^{\rm{sh}}_{s,N}(\boldsymbol{z})\|F^J_{s,h}\|_{\mathcal{W}_s},
\end{equation}
with $e^{\rm{sh}}_{s,N}(\boldsymbol{z}):=(\int_{[0,1]^s}(e^{\rm{wor}}_{s,N}(\boldsymbol{z}, \boldsymbol{\Delta}))^2\,{\rm{d}}\boldsymbol{\Delta})^{1/2}$ (often known as shift-averaged worst case error). By \eqref{error_bound_2}, we can decouple the dependence on $\boldsymbol{z}$ from that on the integrand $F^J_{s,h}$.

A generating vector $\boldsymbol{z}=(z_1,z_2,z_3,\cdots)$ can be constructed by a component-by-component algorithm which determines $z_1,z_2,z_3,\cdots$ sequentially, using $e^{\rm{sh}}_{s,N}(\boldsymbol{z})$ as the search criterion: given that $z_1,\cdots,z_i$ are already determined, $z_{i+1}$ is chosen from the set $\{1\leq z\leq N-1:{\rm{gcd}}(z,N)=1\}$ to minimize $ e^{\rm{sh}}_{i+1,N}(z_1,\cdots,z_{i+1})$. See \cite{qmc_ref_13} for the precise formula for $e^{\rm{sh}}_{s,N}(\boldsymbol{z})$ for general weight functions $\psi_j$ and weight parameters $\gamma_{\fuu}$:
\[
\left(e^{\rm{sh}}_{s,N}(\boldsymbol{z})\right)^2=\sum_{\emptyset\neq\fuu\subset[s]}\frac{\gamma_{\fuu}}{N}\sum^N_{i=1}\prod_{k\in\fuu}\theta_k\left(\left\{\frac{iz_k}{N}\right\}\right),
\]
with
\[
\theta_k(f)=\int^{\infty}_{\Phi^{-1}(f)}\frac{\Phi(t)-f}{\psi^2_k(t)}\dt + \int^{\infty}_{\Phi^{-1}(1-f)}\frac{\Phi(t)-1+f}{\psi^2_k(t)}\dt-\int^{\infty}_{-\infty}\frac{\Phi^2(t)}{\psi^2_k(t)}\dt.
\]
See also \cite{qmc_ref_13} for the following choices of $\phi$ and $\psi_k$ with
a nearly $\mathcal{O}(N^{-1})$ convergence rate: the cumulative normal distribution function and weight functions
in \eqref{active_weight}. Below we use a result from \cite{main_ref}. Note here that $\varphi(p)=p-1$ for $p$ prime, and it is known that $\frac{1}{\varphi(N)}<\frac{9}{N}$ for any $N\leq 10^{30}$. Hence, in practice, we can replace $\varphi(N)$ by $\frac{C}{N}$ for some $C>0$.

\begin{theorem}\label{mse_1}
For given $h>0$, $s,J,N\in\mathbb{N}$, weight parameters $\boldsymbol{\gamma}=(\gamma_{\fuu})_{\fuu\subset\mathbb{N}}$, Gaussian density function $\phi$ and weight function $\psi_j$ defined in \eqref{active_weight}, an RSLR with $N$ points can be constructed by a component-by-component algorithm satisfying for any $\lambda\in(1/2,1]$,
\begin{equation*}
{\rm RMSE}_{\rm qmc}(N)\leq
\Big(\sum_{\emptyset\neq\fuu\subset\{1:s\}}\gamma^{\lambda}_{\fuu}\prod_{i\in\fuu}\varrho_i(\lambda)\Big)^{\frac{1}{2\lambda}}\left[\varphi(N)\right]^{-\frac{1}{2\lambda}}\|F^J_{s,h}\|_{\mathcal{W}_s}
\end{equation*}
with
\begin{equation}\label{thm_est_2}
\varrho_i(\lambda):=2\left(\frac{\sqrt{2\pi}\exp(a^2_i/\eta)}{\pi^{2-2\eta}(1-\eta)\eta}\right)^{\lambda}\zeta\left(\lambda+\frac{1}{2}\right)
\quad{\rm{and}}\quad\eta:=\frac{2\lambda-1}{4\lambda},
\end{equation}
where $\varphi(n):=|\{1\leq z\leq n-1:{\rm{gcd}}(z,n)=1\}|$ is the Euler totient function and $\zeta(x):=\sum^{\infty}_{k=1}k^{-x}$ denotes the Riemann zeta function.
\end{theorem}

\subsection{Estimate for the weighted Sobolev norm}
The next result shows that for each $s\in\mathbb{N}$ and choice of $\gamma_{\fuu}$, we have $\|F^J_{s,h}\|_{\mathcal{W}_s}<\infty$ for all $J\in\mathbb{N}$ and $h>0$. Thus, together with Theorem \ref{mse_1}, we obtain an estimate for the $\rm RMSE_{qmc}$ with a nearly $\mathcal{O}(N^{-1})$ convergence rate. However, the bound may depend on the parametric dimension $s\in\mathbb{N}$. In Section \ref{ssec:weight}, we will prove that a careful choice of $\gamma_{\fuu}$ can remove the dependency on $s\in\mathbb{N}$.

\begin{theorem}\label{mse_2}
Let  the weight functions $\psi_j$ be defined in \eqref{active_weight} and $F^J_{s,h}$ the
integrand in \eqref{QMC_term} for any $J=1,\cdots,\ell$ and $h>0$. Then $F^J_{s,h}\in \mathcal{W}_s$,
and with constants $a_i$ and $b_i$ defined in \eqref{active_weight} and \eqref{eqn:b}, its $\mathcal{W}_s$ norm is bounded by
\begin{equation*}
\|F^J_{s,h}\|^2_{\mathcal{W}_s}\leq(2\sqrt{2}+2C^*)^2
\|\mathcal{G}\|_{(L^2(D))'}^2\sum_{\fuu\subset[s]}\frac{(|\fuu|!)^22^{2|\fuu|}}
{\gamma_{\fuu}}\prod_{i\in\fuu}\frac{b_i^2}{a_i}.
\end{equation*}
\end{theorem}
\begin{proof}
By Theorem \ref{reg_estimate} and the linearity of $\mathcal{G}$, we obtain for each $\yy\in\R^s$,
\[
|\partial^{\fuu}F^J_{s,h}(\yy)|\leq\|\mathcal{G}\|_{(L^2(D))'}\|\partial^{\fuu}\uu^J_{s,h}(\yy)\|_2\leq
\widetilde{C}|\fuu|!2^{|\fuu|}\bigg(\prod_{j\in\fuu}b_j\bigg)\|\exp Z_s(\yy)\|_{H^1(D)},
\]
with $\widetilde{C}:=(2\sqrt{2}+2C^*)\|\mathcal{G}\|_{(L^2(D))'}$. Further, the smallness of the truncated log-normal random field \eqref{trunc_small} implies that $\|\exp Z_s(\yy)\|_{H^1(D)}\leq1$ for all $s\in\mathbb{N}$ and $\yy\in\R^s$. Then the definition of the weighted Sobolev norm \eqref{wss} implies
\begin{align*}
\|F^J_{s,h}\|_{\mathcal{W}_s}^2
\leq& \widetilde{C}^2\sum_{\fuu\subset[s]}\frac{(|\fuu|!)^22^{2|\fuu|}}{\gamma_{\fuu}}\!\!\Big(\prod_{j\in\fuu}b_j\Big)^2 \int_{\R^{|\fuu|}}\!\!\Big(\int_{\R^{s-|\fuu|}}\!\!\!\!\prod_{j\in[s]\setminus\fuu}\!\!\phi(y_j)\dy_{[s]\setminus\fuu}\Big)^2\prod_{j\in\fuu}\psi^2_j(y_j)\dy_{\fuu}\\
\leq& \widetilde{C}^2\sum_{\fuu\subset[s]}\frac{(|\fuu|!)^22^{2|\fuu|}}
{\gamma_{\fuu}}\prod_{j\in\fuu}\frac{b_j^2}{a_j},
\end{align*}
since $\int_{\R}\phi(y)\,{\rm{d}}y=1$ and $\int_{\R}\psi^2_j(y)\,{\rm{d}}y=\frac{1}{a_j}$ for all $j\in\mathbb{N}$.
\end{proof}

Theorems \ref{mse_1} and \ref{mse_2} together give the following RMSE estimate.
\begin{theorem}\label{mse_3}
Let $F^J_{s,h}$ be the integrand defined in \eqref{QMC_term} and let $\psi_j$ be a weight function defined in \eqref{active_weight}. For given $s,J,N\in\mathbb{N}$ with $N\leq 10^{30}$, $h>0$, weights $\boldsymbol{\gamma}=(\gamma_{\fuu})_{\fuu\subset\mathbb{N}}$ and standard Gaussian density function $\phi$, we can construct an RSLR with $N$ points in $s$ dimensions by a component-by-component algorithm such that for any $\lambda\in(1/2,1]$,
\begin{equation}\label{thm_est_4}
{\rm RMSE}_{\rm qmc}(N)\leq 9(2\sqrt{2}+2C^*)\|\mathcal{G}\|_{(L^2(D))'}K_{\boldsymbol{\gamma},s}(\lambda)N^{-\frac{1}{2\lambda}},
\end{equation}
with $\varrho_i(\lambda)$ defined in \eqref{thm_est_2},
\begin{equation}\label{thm_est_5}
K_{\boldsymbol{\gamma},s}(\lambda):=\Big(\sum_{\emptyset\neq\fuu\subset[s]}\gamma^{\lambda}_{\fuu}\prod_{i\in\fuu}\varrho_i(\lambda) \Big)^{\frac{1}{2\lambda}} \Big(\sum_{\fuu\subset[s]}\frac{(|\fuu|!)^22^{2|\fuu|}}
{\gamma_{\fuu}}\prod_{i\in\fuu}\frac{b_i^2}{a_i}\Big)^{1/2}.
\end{equation}
\end{theorem}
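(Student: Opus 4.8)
The plan is to obtain Theorem \ref{mse_3} purely by composing Theorem \ref{mse_1} with Theorem \ref{mse_2} and then converting the Euler-totient factor $\varphi(N)$ into the advertised rate $N^{-1/(2\lambda)}$. No new analysis is needed: the component-by-component error bound already decouples the dependence on the generating vector from the dependence on the integrand through \eqref{error_bound_2}, so the whole argument amounts to substituting the weighted-Sobolev-norm estimate into the lattice-rule bound and carrying out an elementary manipulation of constants.

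First I would fix $\lambda\in(1/2,1]$ and invoke Theorem \ref{mse_1}, which furnishes a generating vector $\boldsymbol{z}$ constructed by the component-by-component algorithm for which the root-mean-square error is bounded as in \eqref{thm_est_1}, namely by the product of $\left(\sum_{\emptyset\neq\fuu\subset\{1:s\}}\gamma^{\lambda}_{\fuu}\prod_{i\in\fuu}\varrho_i(\lambda)\right)^{1/(2\lambda)}$, the totient factor $[\varphi(N)]^{-1/(2\lambda)}$, and the weighted Sobolev norm $\|F^J_{s,h}\|_{\mathcal{W}_s}$. Next I would take the square root of the estimate \eqref{thm_est_3} from Theorem \ref{mse_2} to get
\[
\|F^J_{s,h}\|_{\mathcal{W}_s}\leq (2\sqrt{2}+2C^*)\|\mathcal{G}\|_{(L^2(D))'}\left(\sum_{\fuu\subset\{1:s\}}\frac{(|\fuu|!)^2 2^{2|\fuu|}}{\gamma_{\fuu}}\prod_{i\in\fuu}\frac{b_i^2}{a_i}\right)^{1/2}.
\]
Substituting this into the bound of Theorem \ref{mse_1} and observing that the resulting product of the two sums is exactly $K_{\boldsymbol{\gamma},s}(\lambda)$ as defined in \eqref{thm_est_5}, I arrive at
\[
\sqrt{\mathbb{E}^{\boldsymbol{\Delta}}|I_s(F^J_{s,h})-\mathcal{Q}_{s,N}(F^J_{s,h};\boldsymbol{\Delta})|^2}\leq (2\sqrt{2}+2C^*)\|\mathcal{G}\|_{(L^2(D))'}\,K_{\boldsymbol{\gamma},s}(\lambda)\,[\varphi(N)]^{-\frac{1}{2\lambda}}.
\]

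It then remains only to replace $[\varphi(N)]^{-1/(2\lambda)}$ by a constant multiple of $N^{-1/(2\lambda)}$. Using the stated numerical fact $1/\varphi(N)<9/N$, valid for all $N\leq 10^{30}$, I would write $[\varphi(N)]^{-1/(2\lambda)}<(9/N)^{1/(2\lambda)}=9^{1/(2\lambda)}N^{-1/(2\lambda)}$, and then note that since $\lambda>1/2$ we have $1/(2\lambda)<1$, whence $9^{1/(2\lambda)}\leq 9$. This produces the constant $9$ appearing in \eqref{thm_est_4} and closes the argument. There is no genuine obstacle here: the only point deserving a moment's attention is the constant bookkeeping for the totient factor, specifically verifying that the exponent $1/(2\lambda)\leq 1$ permits bounding $9^{1/(2\lambda)}$ uniformly by $9$ across the full range $\lambda\in(1/2,1]$, and keeping track that the prefactor $(2\sqrt{2}+2C^*)\|\mathcal{G}\|_{(L^2(D))'}$ from Theorem \ref{mse_2} is carried through unchanged.
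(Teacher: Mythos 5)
Your proposal is correct and follows exactly the route the paper intends: the paper states Theorem \ref{mse_3} as an immediate combination of Theorem \ref{mse_1} and Theorem \ref{mse_2}, together with the remark that $1/\varphi(N)<9/N$ for $N\leq 10^{30}$. Your bookkeeping of the totient factor, in particular that $9^{1/(2\lambda)}\leq 9$ for $\lambda\in(1/2,1]$, is the only step requiring care and you handle it correctly.
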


In general, $K_{\boldsymbol{\gamma},s}(\lambda)$ may grow with the parametric dimension $s$. In order to bound $K_{\boldsymbol{\gamma},s}(\lambda)$ uniformly with respect to $s\in\mathbb{N}$, we need to choose $\gamma_{\fuu}$ carefully so that
\begin{equation}\label{thm_est_6}
K_{\boldsymbol{\gamma}}(\lambda):=\Big(\sum_{|\fuu|<\infty}\gamma^{\lambda}_{\fuu}\prod_{i\in\fuu}\varrho_i(\lambda)\Big)^{\frac{1}{2\lambda}} \Big(\sum_{|\fuu|<\infty}\frac{(|\fuu|!)^22^{2|\fuu|}}{\gamma_{\fuu}}\prod_{i\in\fuu}\frac{b_i^2}{a_i}\Big)^{1/2}<\infty.
\end{equation}
If \eqref{thm_est_6} holds, then the estimate $K_{\boldsymbol{\gamma},s}(\lambda)\leq K_{\boldsymbol{\gamma}}(\lambda)<\infty$ holds for any $s\in\mathbb{N}$, and accordingly the error bound \eqref{thm_est_4} is independent of the dimension $s$.

\subsection{Choice of weight parameters $\gamma_{\fuu}$}\label{ssec:weight}
For any $\lambda\in(1/2,1]$, we follow the strategy in \cite{KSS2012,main_ref} to choose weight parameters $\gamma_{\fuu}$ that minimize the constant $K_{\boldsymbol{\gamma}}(\lambda)$, cf. \eqref{thm_est_6}, and that $K_{\boldsymbol{\gamma}}(\lambda)$ is finite.
Note that the constant $K_{\boldsymbol{\gamma},s}(\lambda)$ in \eqref{thm_est_5} and the uniform bound $K_{\boldsymbol{\gamma}}(\lambda)$ in \eqref{thm_est_6} have the same form as the function appearing in Lemma \ref{aux_conv_lemma_1}. Therefore, we can infer the proper form of the weight parameters $\gamma_{\fuu}$. Below we specify the parameter $\lambda>0$ so that the constant $K_{\boldsymbol{\gamma}}(\lambda)$ is finite in our setting and to obtain a good convergence rate.

\begin{theorem}\label{QMC_final}
Let $\psi_j$ be the weight functions defined in \eqref{active_weight} with $a_j$ satisfying \eqref{a_cond}, and Assumption \ref{ass_indiv} hold for some $p\leq1$. When $p=1$, additionally,
\begin{equation}\label{thm_further_ass}
\sum_{j\geq1}b_j\leq \frac{1}{2}\sqrt{\frac{a_{\min}}{\varrho_{\max}(1)}},
\end{equation}
with $\varrho_{\max}(\lambda)$ defined by replacing $a_i$ in \eqref{thm_est_2} by $a_{\max}$ in \eqref{a_cond}. Then for each fixed $\lambda\in(1/2,1]$, the weight
\begin{equation}\label{final_weight}
\gamma_{\fuu}=\gamma^*_{\fuu}(\lambda):=\Big((|\fuu|!)^22^{2|\fuu|}\prod_{j\in\fuu}\frac{b_j^2}{a_j\varrho_j(\lambda)}\Big)^{1/(1+\lambda)}
\end{equation}
is the minimizer of $K_{\boldsymbol{\gamma}}(\lambda)$ if the minimum is finite. Additionally, if we choose
\begin{equation}\label{final_lam}
\lambda=\lambda_*:=
\begin{cases}
\frac{1}{2-2\delta}, &{\rm{if}}\,\, p \in (0, 2/3],\\
\frac{p}{2-p}, &{\rm{if}}\,\, p \in (2/3, 1),\\
1, &{\rm{if}}\,\, p =1,
\end{cases}
\end{equation}
for arbitrary $\delta\in(0,1/2]$, and set $\gamma_{\fuu}=\gamma_{\fuu}^*(\lambda_*)$, then $K_{\boldsymbol{\gamma}}(\lambda)<\infty$. Moreover, an RSLR can be constructed by a component-by-component algorithm such that
\[
\mathrm{RMSE}_{\rm qmc}\lesssim N^{-\chi},
\quad \mbox{with }
\chi = (2\lambda_*)^{-1},
\]
in other words,
\[
\mathrm{RMSE}_{\rm qmc}\lesssim
\begin{cases}
N^{-(1-\delta)}\quad&{\rm{if}}\,\, p \in (0, 2/3],\\
N^{-(1/p-1/2)}\quad &{\rm{if}}\,\, p \in (2/3, 1),\\
N^{-\frac{1}{2}}\quad &{\rm{if}}\,\, p =1,
\end{cases}
\]
where the constants are independent of the truncation dimension $s\in\mathbb{N}$, but may depend on $p\in(0,1]$ and $\delta\in(0,1/2]$ if relevant.
\end{theorem}

\begin{proof}
First, note that the finite subsets of $\mathbb{N}$ in \eqref{thm_est_6} can be ordered and the specific choice of ordering is not important, since the convergence is unconditional. Therefore, by Lemma \ref{aux_conv_lemma_1}, the choice of weights \eqref{final_weight} minimizes $K_{\boldsymbol{\gamma}}(\lambda)$, cf \cite{KSS2012, main_ref}. Next, we prove that $K_{\boldsymbol{\gamma}}(\lambda)$ is finite when the weight and the parameter $\lambda$ are given by \eqref{final_weight} and \eqref{final_lam}, respectively. Indeed, define the following quantity
\begin{equation}\label{S_def}
S_{\lambda}:=\sum_{|\fuu|<\infty}(\gamma_{\fuu}^*)^{\lambda}\prod_{j\in\fuu}\varrho_j(\lambda)=\sum_{|\fuu|<\infty} \Big((|\fuu|!)^22^{2|\fuu|}\prod_{j\in\fuu}\frac{[\varrho_j(\lambda)]^{1/\lambda}b_j^2}{a_j}\Big)^{\frac{\lambda}{1+\lambda}}.
\end{equation}
Then $K_{\boldsymbol{\gamma}}(\lambda)=S_{\lambda}^{1/(2\lambda)+1/2}$, and hence it suffices to show that $S_{\lambda}$ is finite.
From \eqref{thm_est_2}, for each $\lambda$, $\varrho_j(\lambda)$ monotonically increases with respect to $a_j$, and thus we obtain $\varrho_j(\lambda)\leq\varrho_{\max}(\lambda)$ for any $j\geq1$. Consequently,
\begin{equation}\label{quant_mid_est_1}
S_{\lambda}\leq\sum_{|\fuu|<\infty}(|\fuu|!)^{2\lambda/(1+\lambda)}\prod_{j\in\fuu}\left(\frac{4[\varrho_{\max}(\lambda)]^{1/\lambda}}{a_{\min}}b_j^2\right)^{\lambda/(1+\lambda)}.
\end{equation}
Now we consider the cases $\lambda\in(1/2,1)$ and $\lambda=1$ separately. For $\lambda\in(1/2,1)$, we have
$2\lambda/(1+\lambda)<1$. Next, we multiply and divide the right-hand side of \eqref{quant_mid_est_1} by $\prod_{j\in\fuu}A_j^{2\lambda/(1+\lambda)}$, with $A_j>0$ to be specified. By H\"older's inequality with H\"older conjugate exponents $(1+\lambda)/(2\lambda)$ and $(1+\lambda)/(1-\lambda)$, we obtain
\begin{align*}
S_{\lambda}
&\leq\sum_{|\fuu|<\infty}(|\fuu|!)^{2\lambda/(1+\lambda)}\prod_{j\in\fuu}A_j^{2\lambda/(1+\lambda)}\prod_{j\in\fuu}\left(\frac{4[\varrho_{\max}(\lambda)]^{1/\lambda}}{a_{\min}}\frac{b_j^2}{A_j^2}\right)^{\lambda/(1+\lambda)}\\
&\leq\left(\sum_{|\fuu|<\infty}|\fuu|!\prod_{j\in\fuu}A_j\right)^{2\lambda/(1+\lambda)}\left(\sum_{|\fuu|<\infty}\prod_{j\in\fuu}\left(\frac{4[\varrho_{\max}(\lambda)]^{1/\lambda}}{a_{\min}}\frac{b_j^2}{A_j^2}\right)^{\lambda/(1-\lambda)}\right)^{(1-\lambda)/(1+\lambda)}\\
&\leq \left(\frac{1}{1-\sum_{j\geq1}A_j}\right)^{\frac{2\lambda}{1+\lambda}}\exp \left(\frac{1-\lambda}{1+\lambda}\left(\frac{4[\varrho_{\max}(\lambda)]^{1/\lambda}}{a_{\min}}\right)^{\frac{\lambda}{1-\lambda}}\sum_{j\geq1} \left(\frac{b_j}{A_j}\right)^{\frac{2\lambda}{1-\lambda}}\right).
\end{align*}
where the last step follows from Lemma \ref{aux_conv_lemma_2}, which holds if
\begin{equation}\label{new_cond_1}
\sum_{j\geq1}A_j<1\quad{\rm{and}}\quad\sum_{j\geq1}(b_j/A_j)^{2\lambda/(1-\lambda)}<\infty.
\end{equation}
Upon choosing
$A_j:=b_j^p/\alpha$ for some $\alpha>\sum_{j\geq1}b_j^p$, Assumption \ref{ass_indiv} implies $\sum_{j\geq1}A_j<1$. Further, Assumption \ref{ass_indiv} also implies $\sum_{j\geq1}b_j^q$ for any $q\geq p$. Hence the second sum in \eqref{new_cond_1} converges provided that
$\frac{2\lambda}{1-\lambda}(1-p)\geq p$ if and only if $\lambda\geq\frac{p}{2-p}$.
Since $\lambda\in(1/2,1)$, if $p\in(0,2/3)$ we have $\frac{1}{2}\geq\frac{p}{2-p}$ and hence we can choose $\lambda=1/(2-2\delta)$ for some $\delta\in(0,1/2)$, so that $\frac{p}{2-p}\leq\frac{1}{2}<\lambda<1$. If $p\in(2/3,1)$, we have $\frac{1}{2}<\frac{p}{2-p}<1$, and we may choose $\lambda=p/(2-p)$. When $p=1$, we choose $\lambda=1$. Then by Lemma \ref{aux_conv_lemma_2}, we have from \eqref{quant_mid_est_1} that
\[
S_1\leq\sum_{|\fuu|<\infty}|\fuu|!\prod_{j\in\fuu}\left(\frac{4\varrho_{\max}(1)}{a_{\min}}b_j^2\right)^{\frac{1}{2}} \leq\Big(1-\sum_{j\geq1}2b_j\Big(\frac{\varrho_{\max}(1)}{a_{\min}}\Big)^{\frac1{2}}\Big)^{-1},
\]
which is finite by assumption \eqref{thm_further_ass}. This completes the proof of the theorem.
\end{proof}

Last, we give a proper choice of $a_j$ which will be used in numerical experiments.
\begin{corollary}\label{aj_choice}
Let $\lambda=\lambda_*$ and $\gamma_{\fuu}=\gamma_{\fuu}^*(\lambda_*)$ be defined in \eqref{final_lam} and \eqref{final_weight}, respectively. Then the constant $K_{\boldsymbol{\gamma}}(\lambda)$ in \eqref{thm_est_6} is minimized when
$a_j=\sqrt{\frac{2\lambda_*-1}{8\lambda_*}}$ for all $j\geq1$.
\end{corollary}
\begin{proof}
In the proof of Theorem \ref{QMC_final}, we have $K_{\boldsymbol{\gamma}}(\lambda)=S_{\lambda}^{1/(2\lambda)+1/2}$, with $S_{\lambda}$ defined in \eqref{S_def}. Since all terms in \eqref{S_def} are positive, it is sufficient to minimize each $[\varrho_j(\lambda)]^{1/\lambda}/a_j$ with respect to $a_j$, in order to minimize $K_{\boldsymbol{\gamma}}(\lambda)$ with respect to $a_j$. By definition, we have $[\varrho_j(\lambda)]^{1/\lambda}=c\exp(a_j^2/\eta_*)$ for some $c>0$ independent of $a_j$ and $\eta_*=\frac{1}{2}-\frac{1}{4\lambda}$. Then direct computation shows that the choice of $a_j$ minimizes $S_{\lambda}$, and hence $K_{\boldsymbol{\gamma}}(\lambda)$.
\end{proof}

Last, we give the total error of the approximation $\mathcal{Q}_{s,N}(F^J_{s,h};\boldsymbol{\Delta})$.
\begin{theorem}\label{final_main_thm}
Let the assumptions of Theorems \ref{fem_main_thm}, \ref{truncation_thm} and \ref{QMC_final} hold. Then the RMSE of $\mathcal{Q}_{s,N}(F^J_{s,h};\boldsymbol{\Delta})$ with respect to the random shift $\boldsymbol{\Delta}\in[0,1]^s$ is bounded by
\begin{equation}\label{final_est_com}
(\mathbb{E}^{\boldsymbol{\Delta}}[(\mathbb{E}[\mathcal{G}(\uu(t_J))]-\mathcal{Q}_{s,N}(F^J_{s,h};\boldsymbol{\Delta}))^2])^{1/2}\leq C(t_J^{-\frac{1}{2}}h^2+\tau + s^{-\frac{k}{2}+\frac{3}{2}}+N^{-\chi}),
\end{equation}
where the constant $C>0$ is independent of positive parameters $h$, $\tau$, $s$ and $N$.
\end{theorem}
\begin{proof}
The  error $\mathbb{E}\left[\mathcal{G}(\uu(t_J))\right]-\mathcal{Q}_{s,N}(F^J_{s,h};\boldsymbol{\Delta})$ can be decomposed into
$$\mathbb{E}[\mathcal{G}(\uu(t_J))-\mathcal{G}(\uu^J_h)]
+ \mathbb{E}[\mathcal{G}(\uu^J_h)-\mathcal{G}(\uu^J_{s,h})]
+(\mathbb{E}[\mathcal{G}(\uu^J_{s,h})]-\mathcal{Q}_{s,N}(F^J_{s,h};\boldsymbol{\Delta})),$$
where the expectation $\mathbb{E}$ is with respect to $\yy\in U_{\bb}$.
Then the desired bound on $(\mathbb{E}^{\boldsymbol{\Delta}}[(\mathbb{E}[\mathcal{G}(\uu(t_J))]-\mathcal{Q}_{s,N}(F^J_{s,h};\boldsymbol{\Delta}))^2])^{1/2}$ follows from
Theorems \ref{fem_main_thm}, \ref{truncation_thm} and \ref{QMC_final} directly.
\end{proof}

\section{Numerical experiments}
\label{sec_exp}
Since Theorem \ref{QMC_final} is our main novel theoretical finding, we only conduct several numerical tests to complement the analysis in Theorem \ref{QMC_final}, we present some numerical results for the system \eqref{main_eq1}-\eqref{main_ic} in the domain $[0,T]\times D\times\Omega$, with $T=1$ and $D=(0,1)^2$. All computations were performed using MATLAB on HPC system at The University of Hong Kong, using 64 cores, each with 3GB memory.

In the computation, we take a time step size $\tau=0.1$. We divide the square domain $D$ into $1/h^2$ congruent small squares (with $h=1/16$), and then further divide each small square into two right triangles, obtaining a shape-regular triangulation $\mathcal{T}_h$. To solve problem \eqref{eqn:WF}, we employ the Taylor-Hood element on the mesh $\mathcal{T}_h$, i.e., a conforming piecewise quadratic element for the velocity $\uu$ and a conforming piecewise linear element for the pressure $p$. The resulting FEM spaces are given by
\begin{align*}
H_h
&:=\{\mathbf{W}\in C(\overline{D})^2:\mathbf{W}|_K\in {P}_2(K)^2\,\,\forall K\in\mathcal{T}_h\,\,{\rm{and}}\,\,\mathbf{W}|_{\partial D}=\boldsymbol{0}\},\\
Q_h
&:=\{\Pi\in C(\overline{D}):\Pi|_K\in {P}_1(K)\,\,\forall K\in\mathcal{T}_h  \text{ and } \int_D \Pi \dx=0 \}.
\end{align*}
This choice satisfies the discrete inf-sup condition \eqref{eqn:inf-sup}.
To handle the divergence-free subspace, we employ a continuous bilinear form $c(\cdot,\cdot)$ on $H_h\times Q_h$ by
$c(\vv,q)=-\int_D q \,(\nabla\cdot \vv) \dx$. The trilinear term $B[\uu,\vv,\ww]$ \eqref{SIBE}, arising from the convective term $(\uu\cdot \nabla)\uu$, is nonlinear. We employ Picard's method to linearize the problem, which constructs a sequence of approximations $(\uu^{j+1,k}_{s,h},p^{j+1,k}_{s,h})$ by solving
\begin{equation*}
\left\{\begin{split}
\tau^{-1}(\uu^{j+1,k}_{s,h}&-\uu^{j}_{s,h},\vv_h)+a(\uu^{j+1,k}_{s,h},\vv_h)\\
     &+B[\uu^{j+1,k-1}_{s,h},\uu^{j+1,k}_{s,h},\vv_h]+c(\vv_h,p^{j+1,k}_{s,h})=0, \quad \forall \vv_h\in H_h,\\
c(\uu^{j+1,k}_{s,h},q_h)&=0,\quad\forall q_h\in Q_h.
\end{split}\right.
\end{equation*}
This formulation is equivalent to the pressure-free formulation for the FEM approximation in Section \ref{sec_fem}, since we use an inf-sup stable velocity-pressure pair for the discrete solutions. We take the initialization
$\uu^{j+1,0}_{s,h}=\uu^{j}_{s,h}$. The iteration process is terminated when the relative $L^2(D)$-norm between the solutions of the current iteration and the previous one falls below a given tolerance $\eta=10^{-7}$. Once the iteration converges after $K$ steps, we assign $\uu_{s,h}^{j+1}=\uu_{s,h}^{j+1,K}$ and $p_{s,h}^{j+1}=p_{s,h}^{j+1,K}$.



The random initial data $\uu_{s,h}^0$ is the $L^2(D)$-projection  into $V_h$ of $\uu^0_{s}$, the truncated KL expansions with $s=400$ of the initial condition $\uu^{0}$ in \eqref{lognormal_initial}, which arises from the random field $Z$ with a zero mean and  Mat\'ern covariance function $\rho_\nu(|\bold{x}-\bold{x}'|)$, with $r=|\bold{x}-\bold{x}'|$ for all $\bold{x},\bold{x}'\in D$, defined by
\begin{equation*}
    \rho_\nu(r)=\sigma^2\frac{2^{1-\nu}}{\Gamma(\nu)}\left(2\sqrt{\nu}\frac{r}{\lambda_{\mathrm{C}}}\right)^\nu K_\nu\left(2\sqrt{\nu}\frac{r}{\lambda_{\mathrm{C}}}\right).
\end{equation*}
Here, $\Gamma$ is the Gamma function, $K_\nu$ is the modified Bessel function of the second kind, $\nu>1/2$ is a smoothness parameter, $\sigma^2$ is the variance and $\lambda_{\mathrm{C}}$ is a length scale parameter. Unlike the analysis, we conduct numerical tests without the smallness assumption in order to show the flexibility of the QMC method in a wider range of situations. Numerical results for initial data satisfying the smallness assumption are similar and hence not presented. Consider the Mat\'ern covariance with the following parameter settings:
$\nu=2.5,1.75$, $\lambda_{\mathrm{C}}=1,0.1$, and $\sigma^2=1,0.25$.
We compute the eigenpairs of \eqref{cov} on a finer mesh $\mathcal{T}_{h/4}$ using conforming piecewise linear elements. The eigenpairs $(\mu_j,\xi_j)_{j=1}^s$ are used to construct $s$-term truncated KL expansions with $s=400$, cf. Fig. \ref{NT_fg1}. Additionally, we obtain the sequence $\boldsymbol{b}=\{b_j\}_{j\geq1}$ in \eqref{ass_indiv} by utilizing the eigenpairs $(\mu_j,\xi_j)$. The summability parameter $p$ in Assumption \ref{ass_indiv} is estimated using linear regression on the sequence $|\log b_j|$ against $\log j$ for $500\leq j\leq 1000$.

Next, we use the component-by-component algorithm \cite{qmc_ref_13} to determine the generating vector $\boldsymbol{z}\in \mathbb{Z}^s$  for the RSLR with $N\in \mathbb{N}$ sampling points. To this end, we construct the weighted Sobolev space $\mathcal{W}_s$ using the norm defined in \eqref{wss}. The weight parameters $\gamma_{\fuu}$ are chosen according to \eqref{final_weight}, and the weight function $\psi^2_j(y)$ is defined in \eqref{active_weight}. We select $a_j=\sqrt{\frac{2\lambda_*-1}{8\lambda_*}}$ based on \eqref{aj_choice}, where the value of $\lambda_\ast$ depends on the empirical estimate $p$.
We take $R=32$ independent random shifts, cf. \eqref{QMC_main}, where each sample $\boldsymbol{\Delta}_r$ for $1\leq r\leq R$ is uniformly distributed over $[0,1]^s$. Then we use the generator vector $\boldsymbol{z}\in \mathbb{Z}^s$ to generate the $N$ sampling points:
$(\frac{i\boldsymbol{z} }{N}+\boldsymbol{\Delta}_r)\bmod 1$  for all $1\leq i\leq N$.
For the given quantity of interest defined by the linear functional $\mathcal{G}\in(L^2(D)^2)'$, we compute the approximation $Q_r:=\mathcal{Q}_{s,N}(F^J_{s,h};\boldsymbol{\Delta_r})$ and its mean $\bar{Q}$ over $R$ random shifts, and obtain an unbiased estimator for the RMSE$_{\rm qmc}$ by
$(\frac{1}{R}\frac{1}{R-1}\sum_{r=1}^R(Q_r-\bar{Q})^2)^{1/2}$,
i.e., the so-called standard error.
We choose two bounded linear functionals $\mathcal{G}_1$ and $\mathcal{G}_2$: $\mathcal{G}_1$ evaluates the first component at the point $(1/2, 1/2)$ for $t=0.1$, and $\mathcal{G}_2$ evaluates the second component at the same point for $t=0.2$. We compute the standard errors $e_1$ and $e_2$ for $\mathcal{G}_1$ and $\mathcal{G}_2$, respectively.

\begin{figure}[htp!]
\centering
\subfloat[$\|\xi_j\|_{\mathbb{C}(\overline{D})}$]{\label{fig:6.1}\includegraphics[trim={17cm 1cm 17cm 1.5cm},clip,width=1.7in]{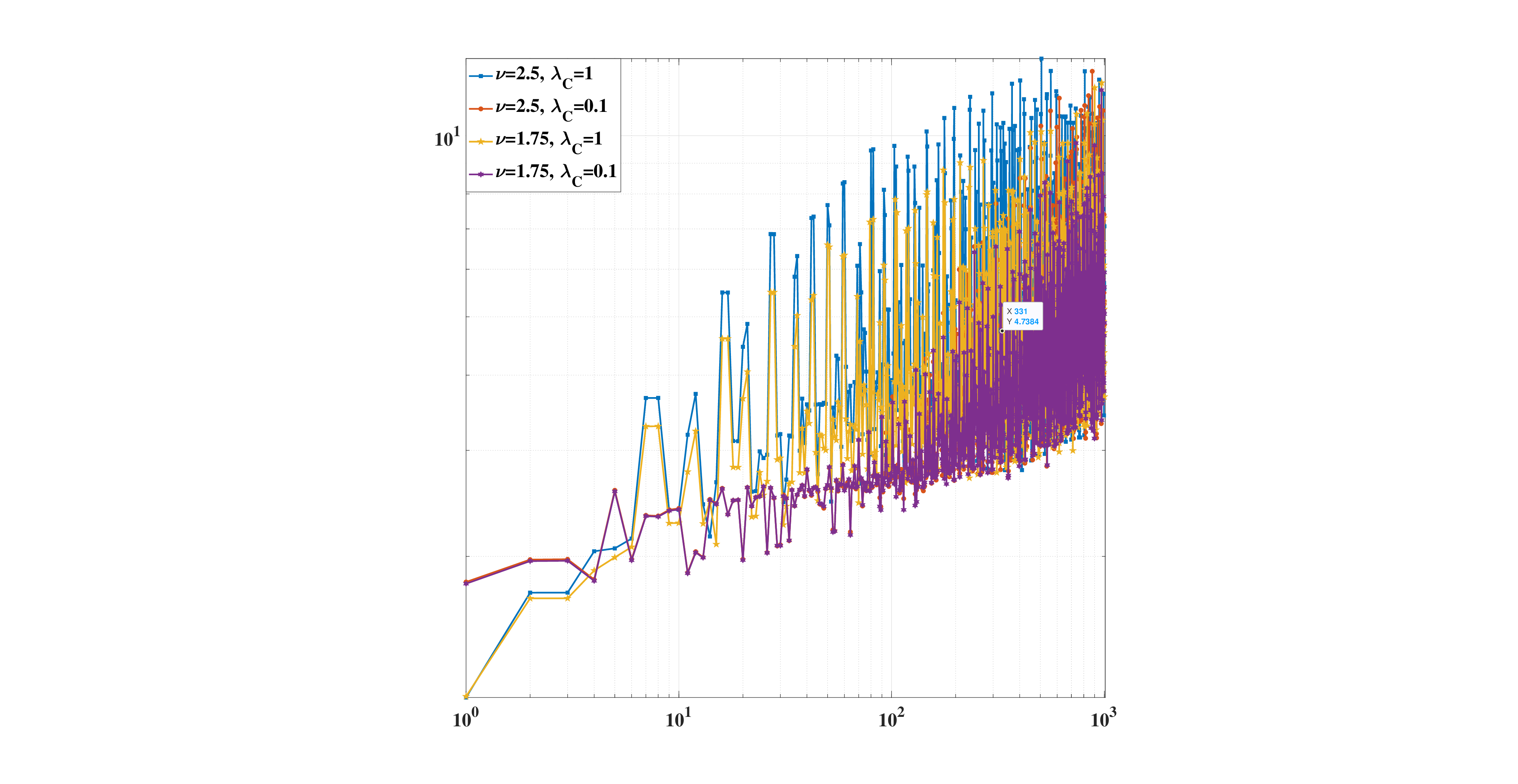}}
\subfloat[$\|\nabla \xi_j\|_{\mathbb{C}(\overline{D})}$]{\label{fig:6.2}\includegraphics[trim={17cm 1cm 17cm 1.5cm},clip,width=1.7in]{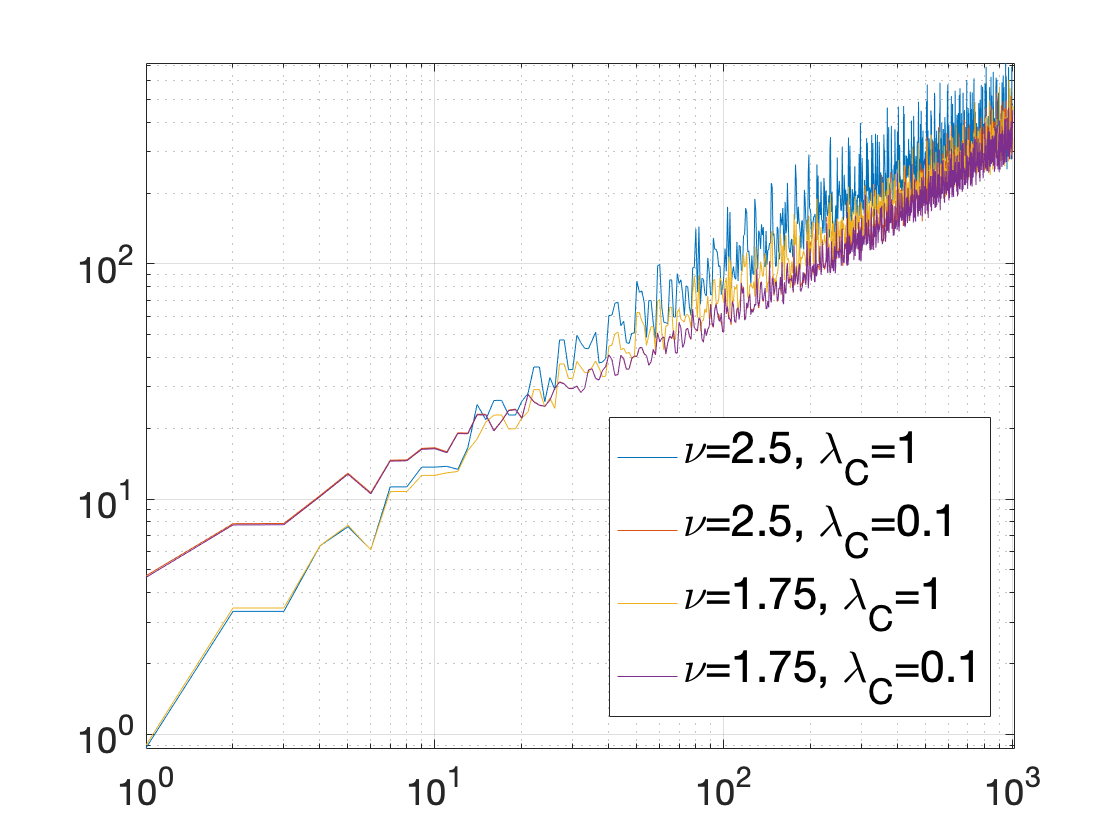}}
\subfloat[$b_j=\sqrt{\mu_j}||\xi_j||_{\mathbb{C}(\overline{D})}$]{\label{fig:6.3}
\includegraphics[trim={17cm 1cm 17cm 1.5cm},clip,width=1.7in]{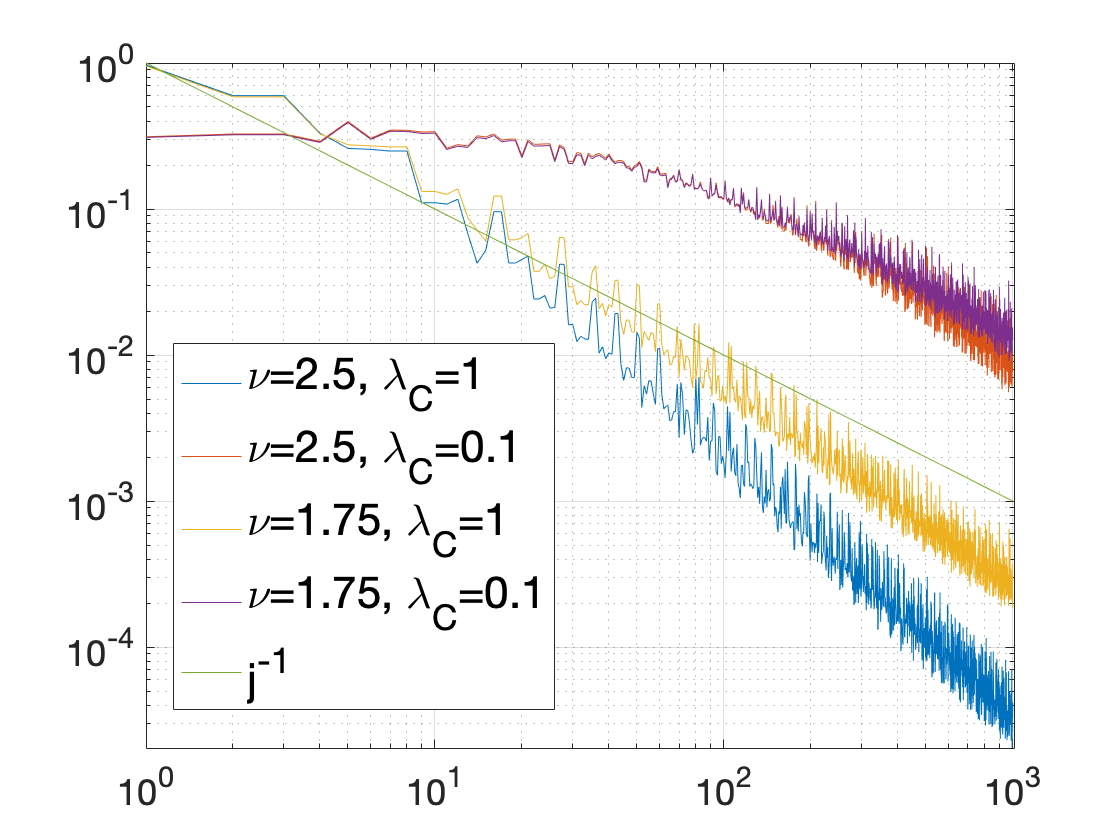}}
\caption{Log-log plot of $\|\xi_j\|_{\mathbb{C}(\overline{D})}$, $\|\nabla \xi_j\|_{\mathbb{C}(\overline{D})}$ and $b_j$ against $j$ for the Mat\'ern covariance with $\nu=2.5$, $\sigma^2=1$ and $\lambda_{\mathrm{C}}=1$.}
\label{NT_fg1}
\end{figure}
\begin{table}[htp!]
  \centering
\caption{Comparison of the standard error of QMC and MC with $\mathcal{G}_1$ and $\mathcal{G}_2$ for Mat\'ern covariance with $\nu=2.5$, $\lambda_{\mathrm{C}}=1$ and different $\sigma^2$.\label{Numerical_Table_1}}
\begin{tabular}{c|cccc|cccc|}
\toprule
    \multicolumn{1}{c}{}& \multicolumn{4}{c}{$\sigma^2=1$}&\multicolumn{4}{c}{$\sigma^2=0.25$}\\
    \cmidrule(lr){2-5} \cmidrule(lr){6-9}
    \multicolumn{1}{c}{$N$} & \multicolumn{2}{c}{QMC} & \multicolumn{2}{c}{MC} & \multicolumn{2}{c}{QMC} & \multicolumn{2}{c}{MC}  \\
    \cmidrule(lr){2-3} \cmidrule(lr){4-5} \cmidrule(lr){6-7} \cmidrule(lr){8-9}
    & $e_1$ &$e_2$  & $e_1$   & $e_2$   & $e_1$ & $e_2$ & $e_1$ & $e_2$  \\
 \midrule
1009               & 6.36e-4 & 4.65e-5 & 1.78e-3 & 1.68e-4 & 7.90e-5 & 8.60e-6 & 4.19e-4 & 3.23e-5 \\
2003               & 3.77e-4 & 6.00e-5 & 1.61e-3 & 1.14e-4 & 2.92e-5 & 4.53e-6 & 2.99e-4 & 2.06e-5 \\
4001               & 3.79e-4 & 5.80e-5 & 9.21e-4 & 1.17e-4 & 2.85e-5 & 4.17e-6 & 2.27e-4 & 1.77e-5 \\
8009               & 1.79e-4 & 3.03e-5 & 5.76e-4 & 5.22e-5 & 1.04e-5 & 2.18e-6 & 1.22e-4 & 1.30e-5 \\
16001              & 1.23e-4 & 1.12e-5 & 4.21e-4 & 3.34e-5 & 1.32e-5 & 8.40e-7 & 8.50e-5 & 6.66e-6 \\
32003              & 6.24e-5 & 1.20e-5 & 2.32e-4 & 2.24e-5 & 8.60e-6 & 7.55e-7 & 6.05e-5 & 5.05e-6 \\
64007              & 4.31e-5 & 7.90e-6 & 1.81e-4 & 1.79e-5 & 2.25e-6 & 1.76e-7 & 4.84e-5 & 4.55e-6 \\
\midrule
Rate               & 0.66     & 0.52     & 0.59     & 0.57     & 0.72     & 0.87     & 0.55     & 0.50     \\
\bottomrule
    \end{tabular}
\end{table}

First we take Mat\'ern covariance with a smoothness parameter $\nu=2.5$, a length scale parameter $\lambda_{\mathrm{C}}=1$ and a variance of $\sigma^2=1$ or $\sigma^2=0.25$. Numerically we observe that the sequence $\{b_j\}_{j=1}^{\infty}$ lies under the sequence $\{j^{-3/2}\}_{j=1}^{\infty}$ for sufficiently large $j$. Therefore, empirically, Assumption \ref{ass_indiv} holds with some $p\leq 2/3$, and thus we take $\lambda_\ast:=0.55$ for these two cases. In Table \ref{Numerical_Table_1}, we compare the standard error of QMC with that of MC for the linear functionals $\mathcal{G}_1$ and $\mathcal{G}_2$. To obtain more precise estimates, for each number of sampling points $N$, we use the mean of ten different tests with different uniformly distributed {\textit{random shift}} ${\boldsymbol{\Delta}}$ in \eqref{QMC_main}. The rate of convergence is estimated by performing a linear regression of the negative logarithm of the standard error against $\log N$, based on the mean of the ten tests. The results show that the QMC method yields a smaller error and a faster convergence rate for both cases. For instance, when the number of sampling points is $N=64007$ and the variance is $\sigma^2=1$, the standard errors for the QMC and MC methods are 4.31e-5 and 1.81e-4, respectively.
For the case of $\lambda_{\mathrm{C}}=0.1$, the empirical parameter $p$ has the value of $0.6832$, and hence we choose $\lambda_*=0.52$, which is presented in Table \ref{Numerical_Table_2}. The performance of the QMC method is similar to that observed in Table \ref{Numerical_Table_1}.

\begin{table}[htp!]
  \centering
\caption{Comparison of the standard error of QMC and MC with $\mathcal{G}_1$ and $\mathcal{G}_2$ for Mat\'ern covariance with $\nu=2.5$, $\lambda_{\mathrm{C}}=0.1$ and different $\sigma^2$.\label{Numerical_Table_2}}
\begin{tabular}{c|cccc|cccc|}
\toprule
    \multicolumn{1}{c}{}& \multicolumn{4}{c}{$\sigma^2=1$}&\multicolumn{4}{c}{$\sigma^2=0.25$}\\
    \cmidrule(lr){2-5} \cmidrule(lr){6-9}
    \multicolumn{1}{c}{$N$} & \multicolumn{2}{c}{QMC} & \multicolumn{2}{c}{MC} & \multicolumn{2}{c}{QMC} & \multicolumn{2}{c}{MC}  \\
    \cmidrule(lr){2-3} \cmidrule(lr){4-5} \cmidrule(lr){6-7} \cmidrule(lr){8-9}
    & $e_1$ &$e_2$  & $e_1$   & $e_2$   & $e_1$ & $e_2$ & $e_1$ & $e_2$  \\
 \midrule
1009               & 1.76e-3 & 8.26e-5 & 1.92e-3 & 1.68e-4 & 1.56e-4 & 1.42e-5 & 6.09e-4 & 5.79e-5 \\
2003               & 1.14e-3 & 6.82e-5 & 3.81e-3 & 2.62e-4 & 8.88e-5 & 1.24e-5 & 1.22e-3 & 8.23e-5 \\
4001               & 4.00e-4 & 3.80e-5 & 3.37e-3 & 1.33e-4 & 6.66e-5 & 6.85e-6 & 1.00e-3 & 3.87e-5 \\
8009               & 3.91e-4 & 2.61e-5 & 1.33e-3 & 1.31e-4 & 3.07e-5 & 2.07e-6 & 4.20e-4 & 3.51e-5 \\
16001              & 2.18e-4 & 1.96e-5 & 1.30e-3 & 6.77e-5 & 2.12e-5 & 1.81e-6 & 3.31e-4 & 2.27e-5 \\
32003              & 1.66e-4 & 1.03e-5 & 7.99e-4 & 5.10e-5 & 2.21e-5 & 1.68e-6 & 2.63e-4 & 1.67e-5 \\
64007              & 9.53e-5 & 8.67e-6 & 7.14e-4 & 4.33e-5 & 7.97e-6 & 7.72e-7 & 2.33e-4 & 1.33e-5 \\
\midrule
Rate               & 0.68     & 0.58     & 0.36     & 0.41     & 0.66     & 0.73     & 0.36     & 0.42     \\
\bottomrule
    \end{tabular}
\end{table}
\begin{table}[htp!]
  \centering
    \caption{Comparison of the standard error of QMC and MC with $\mathcal{G}_1$ and $\mathcal{G}_2$ for Mat\'ern covariance with $\nu=1.75$, $\lambda_{\mathrm{C}}=1$ and different $\sigma^2$.\label{Numerical_Table_3}}
    \begin{tabular}{c|cccc|cccc|}
    \toprule
    \multicolumn{1}{c}{}& \multicolumn{4}{c}{$\sigma^2=1$}&\multicolumn{4}{c}{$\sigma^2=0.25$}\\
    \cmidrule(lr){2-5} \cmidrule(lr){6-9}
    \multicolumn{1}{c}{$N$} & \multicolumn{2}{c}{QMC} & \multicolumn{2}{c}{MC} & \multicolumn{2}{c}{QMC} & \multicolumn{2}{c}{MC}  \\
    \cmidrule(lr){2-3} \cmidrule(lr){4-5} \cmidrule(lr){6-7} \cmidrule(lr){8-9}
    & $e_1$ &$e_2$  & $e_1$   & $e_2$   & $e_1$ & $e_2$ & $e_1$ & $e_2$  \\
 \midrule
1009               & 7.09e-4 & 1.06e-4 & 1.61e-3 & 2.35e-4 & 7.32e-5 & 1.14e-5 & 3.66e-4 & 4.96e-5 \\
2003               & 4.69e-4 & 5.47e-5 & 1.59e-3 & 1.42e-4 & 7.53e-5 & 6.66e-6 & 3.57e-4 & 2.79e-5 \\
4001               & 2.93e-4 & 3.85e-5 & 7.29e-4 & 8.35e-5 & 2.42e-5 & 3.01e-6 & 1.91e-4 & 1.99e-5 \\
8009               & 1.78e-4 & 2.85e-5 & 5.14e-4 & 7.16e-5 & 1.97e-5 & 1.68e-6 & 1.39e-4 & 1.65e-5 \\
16001              & 1.91e-4 & 1.78e-5 & 5.12e-4 & 5.37e-5 & 1.16e-5 & 1.48e-6 & 1.14e-4 & 1.26e-5 \\
32003              & 1.15e-4 & 1.03e-5 & 5.90e-4 & 3.70e-5 & 8.98e-6 & 6.44e-7 & 1.27e-4 & 7.45e-6 \\
64007              & 6.77e-5 & 6.08e-6 & 3.29e-4 & 2.67e-5 & 4.34e-6 & 5.32e-7 & 8.80e-5 & 8.31e-6 \\
\midrule
Rate               & 0.53     & 0.65     & 0.37     & 0.50     & 0.69     & 0.75     & 0.35     & 0.44     \\
\bottomrule
    \end{tabular}
\end{table}

Next, we consider Mat\'ern covariance with a smoothness parameter $\nu=1.75$ and length scale parameter $\lambda_{\mathrm{C}}=1$ or $\lambda_{\mathrm{C}}=0.1$, with variances of $\sigma^2=1$ or $\sigma^2=0.25$, respectively. For $\lambda_{\mathrm{C}}=1$, the empirical parameter $p$ is estimated to be $0.7198$, leading to the choice of $\lambda_\ast=0.56$. For $\lambda_{\mathrm{C}}=0.1$, the empirical parameter $p$ is estimated to be $0.8988$, and accordingly, we choose $\lambda_\ast=0.81$. The numerical results of the QMC scheme using these parameters are presented in Tables \ref{Numerical_Table_3} and \ref{Numerical_Table_4}.

\begin{table}[htp!]
  \centering
\caption{Comparison of the standard error of QMC and MC with $\mathcal{G}_1$ and $\mathcal{G}_2$ for Mat\'ern covariance with $\nu=1.75$, $\lambda_{\mathrm{C}}=0.1$ and different $\sigma^2$.\label{Numerical_Table_4}}
    \begin{tabular}{c|cccc|cccc|}
    \toprule
    \multicolumn{1}{c}{}& \multicolumn{4}{c}{$\sigma^2=1$}&\multicolumn{4}{c}{$\sigma^2=0.25$}\\
    \cmidrule(lr){2-5} \cmidrule(lr){6-9}
    \multicolumn{1}{c}{$N$} & \multicolumn{2}{c}{QMC} & \multicolumn{2}{c}{MC} & \multicolumn{2}{c}{QMC} & \multicolumn{2}{c}{MC}  \\
    \cmidrule(lr){2-3} \cmidrule(lr){4-5} \cmidrule(lr){6-7} \cmidrule(lr){8-9}
    & $e_1$ &$e_2$  & $e_1$   & $e_2$   & $e_1$ & $e_2$ & $e_1$ & $e_2$  \\
 \midrule
1009               & 1.76e-3 & 9.78e-5 & 2.73e-3 & 2.01e-4 & 2.12e-4 & 9.82e-6 & 8.70e-4 & 6.65e-5 \\
2003               & 7.76e-4 & 7.40e-5 & 3.24e-3 & 2.57e-4 & 1.19e-4 & 1.03e-5 & 1.14e-3 & 8.00e-5 \\
4001               & 3.78e-4 & 3.73e-5 & 3.30e-3 & 9.66e-5 & 5.05e-5 & 3.67e-6 & 1.02e-3 & 3.33e-5 \\
8009               & 4.54e-4 & 3.60e-5 & 1.11e-3 & 9.13e-5 & 6.07e-5 & 2.68e-6 & 3.74e-4 & 3.04e-5 \\
16001              & 9.48e-5 & 1.50e-5 & 8.76e-4 & 6.76e-5 & 1.67e-5 & 2.03e-6 & 2.42e-4 & 2.14e-5 \\
32003              & 9.23e-5 & 8.54e-6 & 8.32e-4 & 5.82e-5 & 1.36e-5 & 7.56e-7 & 2.65e-4 & 1.76e-5 \\
64007              & 1.11e-4 & 6.32e-6 & 5.22e-4 & 3.62e-5 & 7.13e-6 & 6.23e-7 & 1.95e-4 & 1.21e-5 \\
\midrule
Rate               & 0.72     & 0.69     & 0.47     & 0.44     & 0.81     & 0.73     & 0.46     & 0.44     \\
\bottomrule
    \end{tabular}
\end{table}

Finally, we summarize all the numerical experiments from Table \ref{Numerical_Table_1} to Table \ref{Numerical_Table_4} and illustrate their convergence in Fig. \ref{NT_fg2-1} and Fig. \ref{NT_fg2-2}. The results clearly demonstrate that the proposed QMC method outperforms the standard MC method in terms of the standard error for all cases. Additionally, the QMC method exhibits a faster convergence rate compared to the MC method in most cases. These findings validate the effectiveness of the proposed QMC approach for solving the given problem.

\begin{figure}[htp!]
\centering
\subfloat[]{\includegraphics[trim={15cm 0.5cm 17cm 0.5cm},clip,width=2.3in]{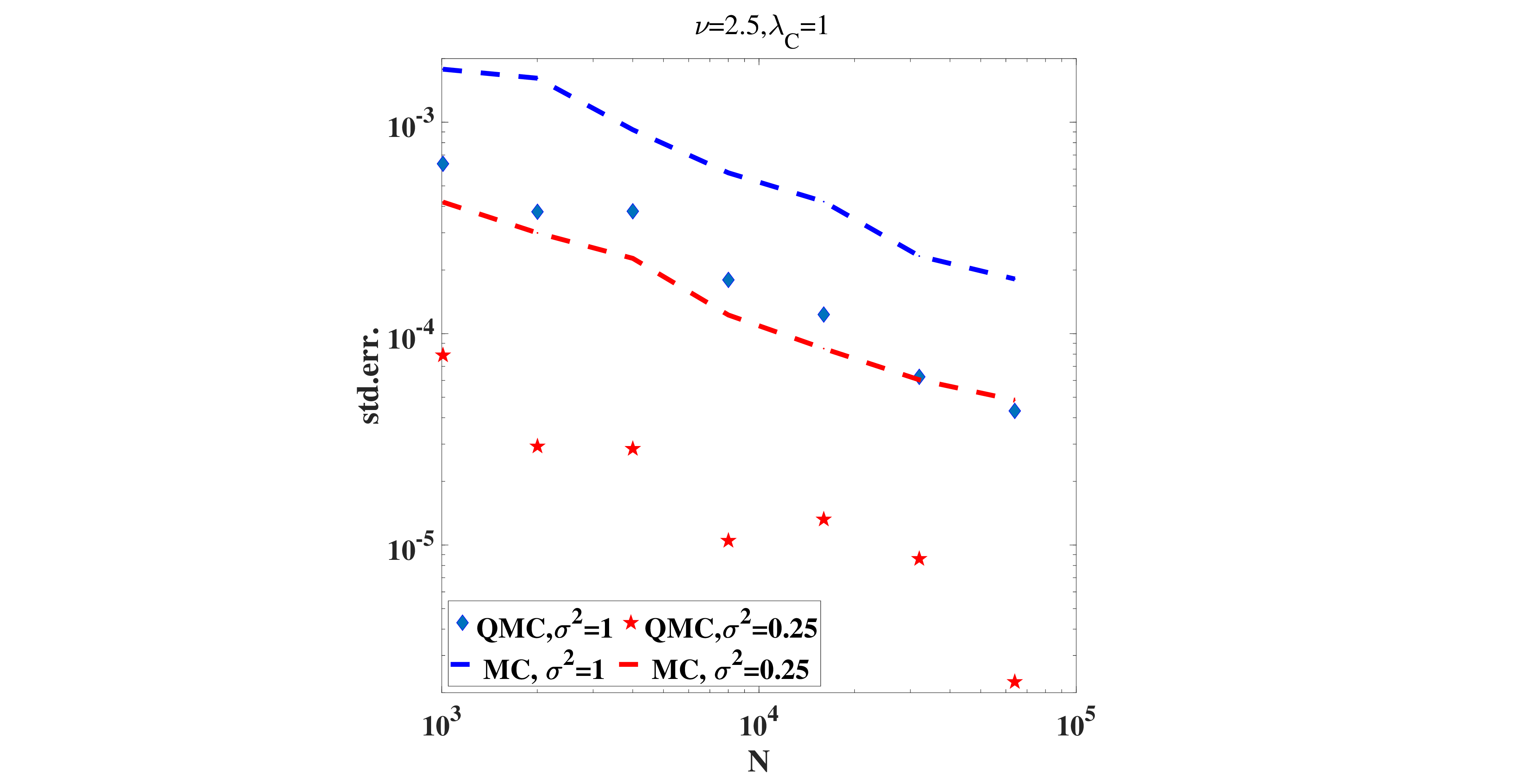}}
\subfloat[]{\includegraphics[trim={15cm 0.5cm 17cm 0.5cm},clip,width=2.3in]{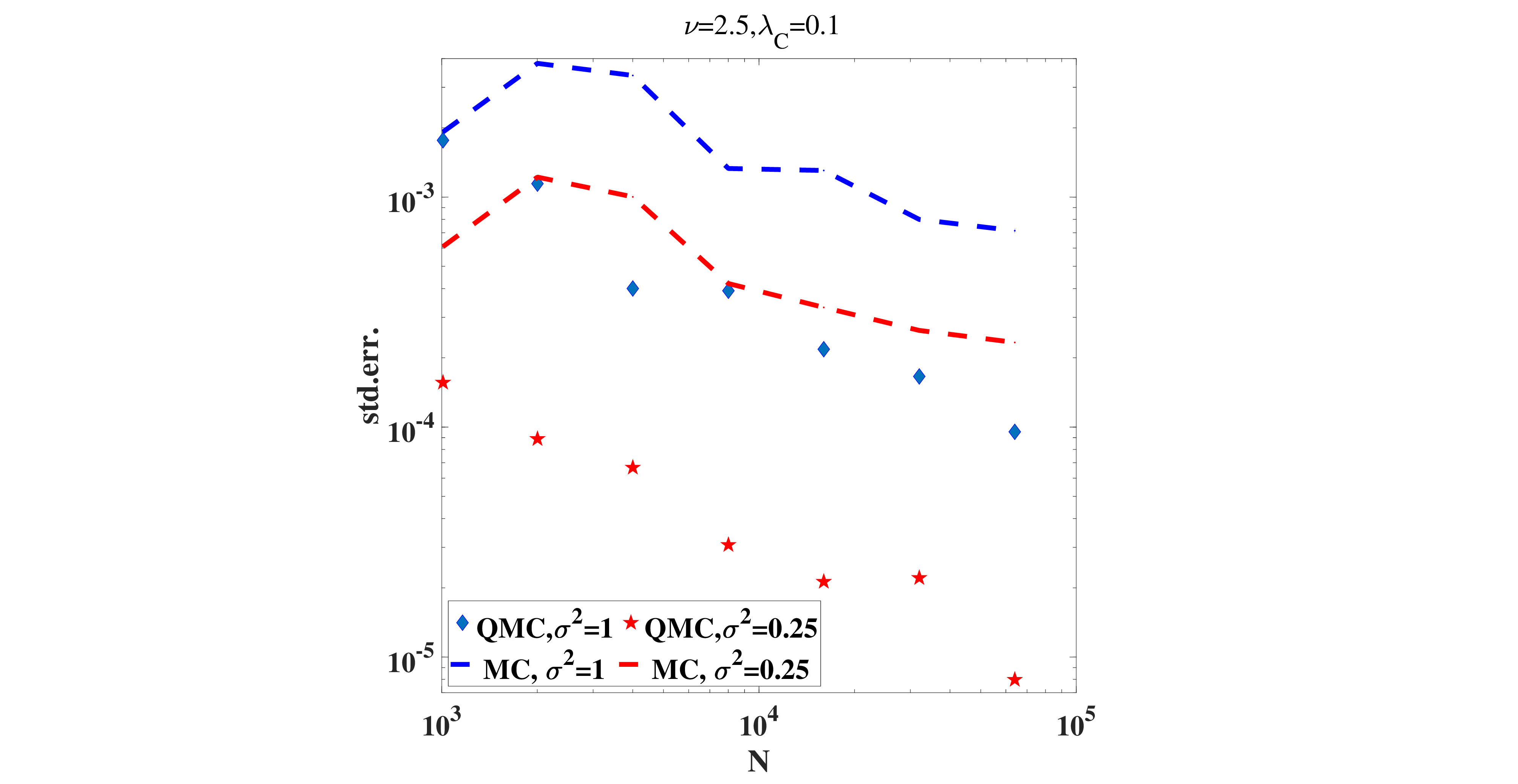}}\\
\subfloat[]{
\includegraphics[trim={15cm 0.5cm 17cm 0.5cm},clip,width=2.3in]{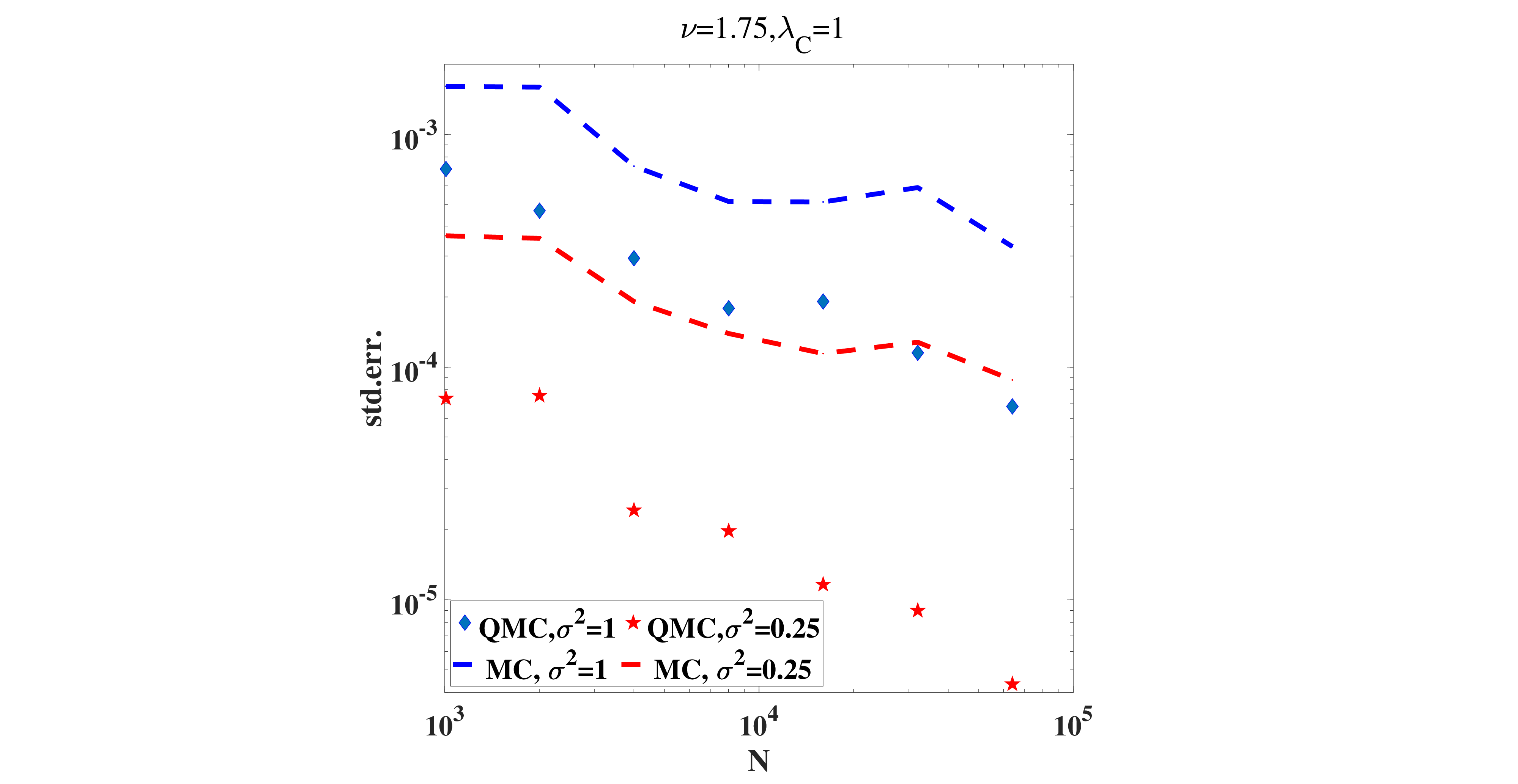}}
\subfloat[]{
\includegraphics[trim={15cm 0.5cm 17cm 0.5cm},clip,width=2.3in]{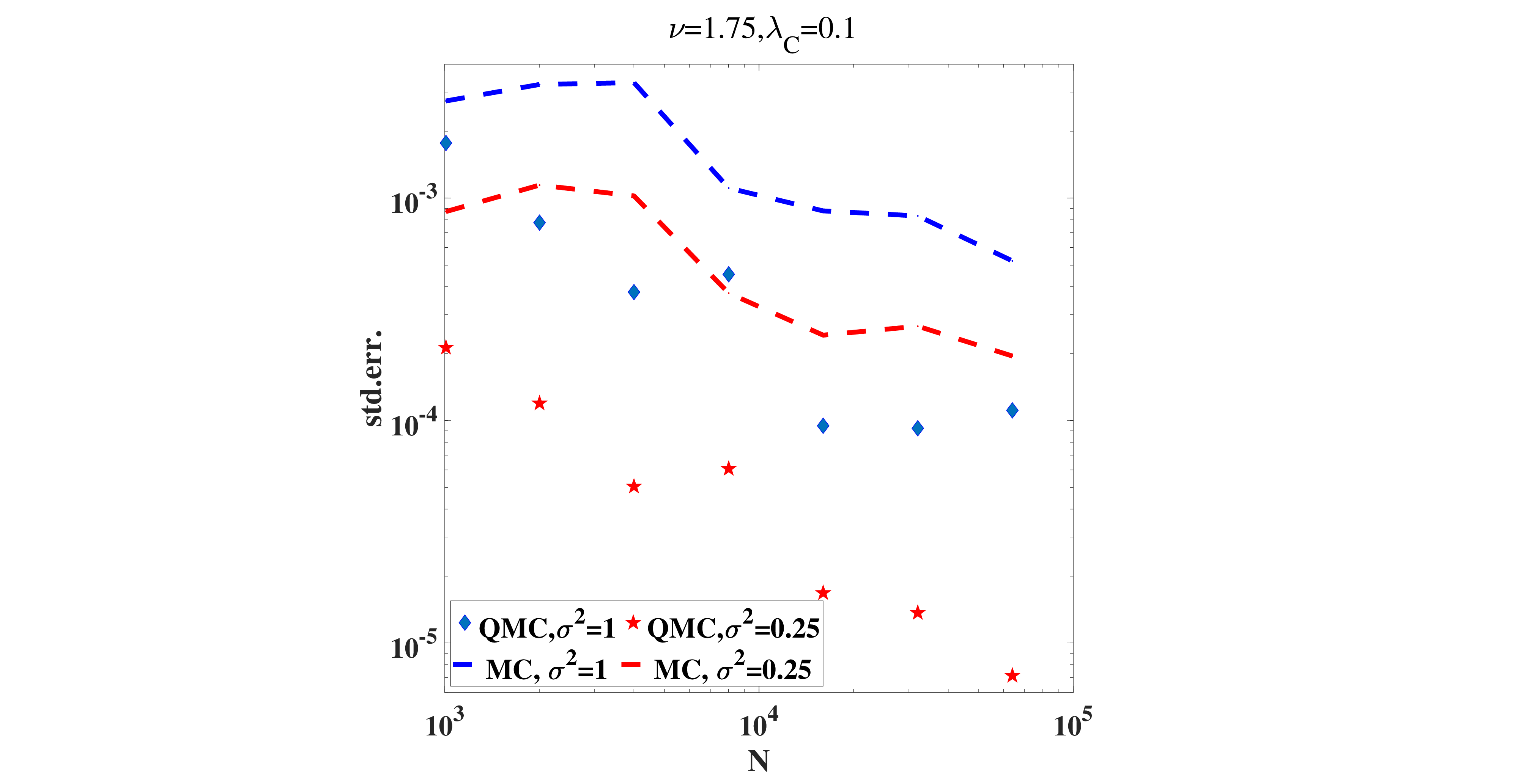}}
\caption{The standard errors of $e_1$ with various Mat\'ern covariance parameters for QMC and MC plotted versus the number of sampling points $N$.}
\label{NT_fg2-1}
\end{figure}

\begin{figure}[htp!]
\centering
\subfloat[]{\includegraphics[trim={15cm 0.5cm 17cm 0.5cm},clip,width=2.3in]{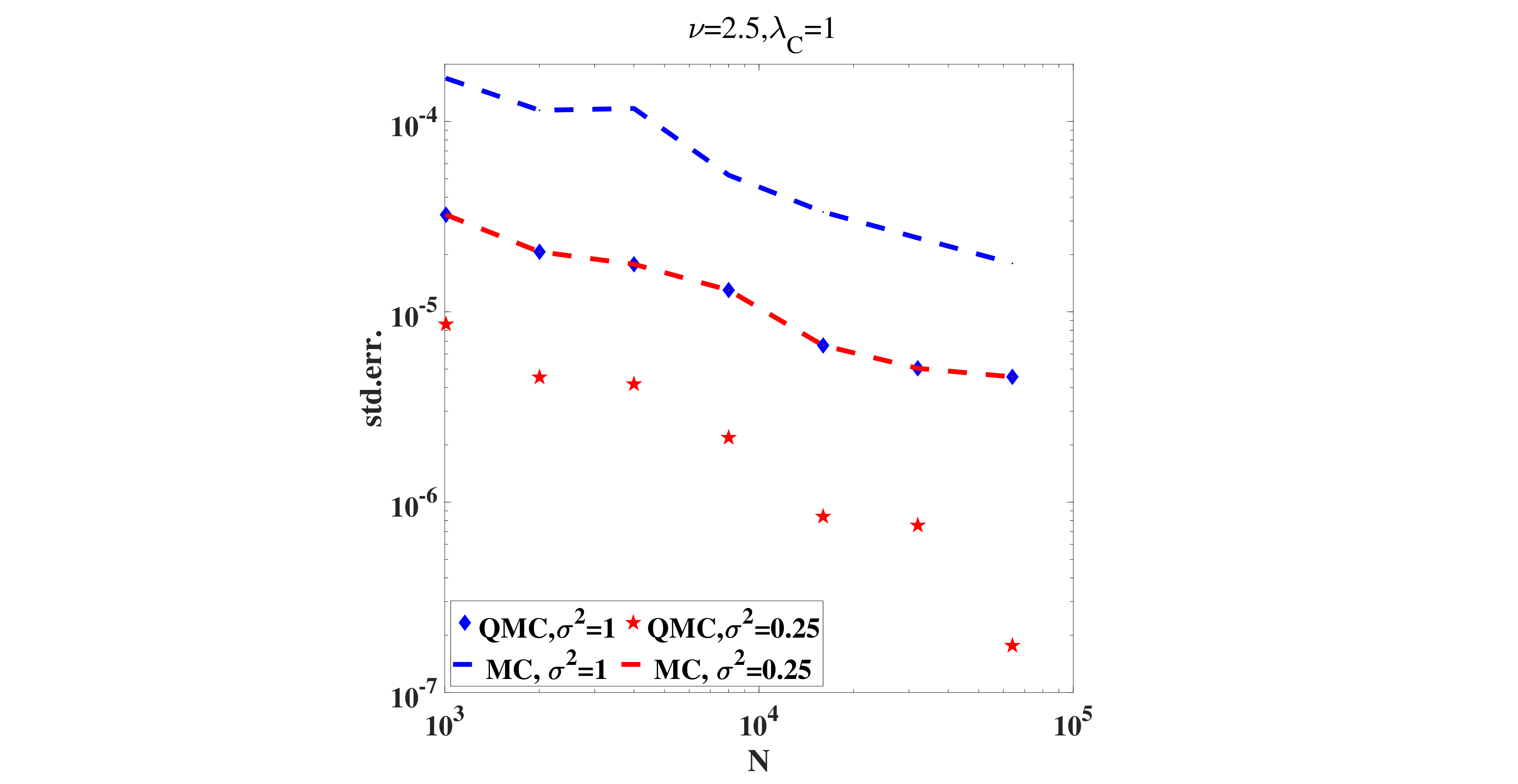}}
\subfloat[]{\includegraphics[trim={15cm 0.5cm 17cm 0.5cm},clip,width=2.3in]{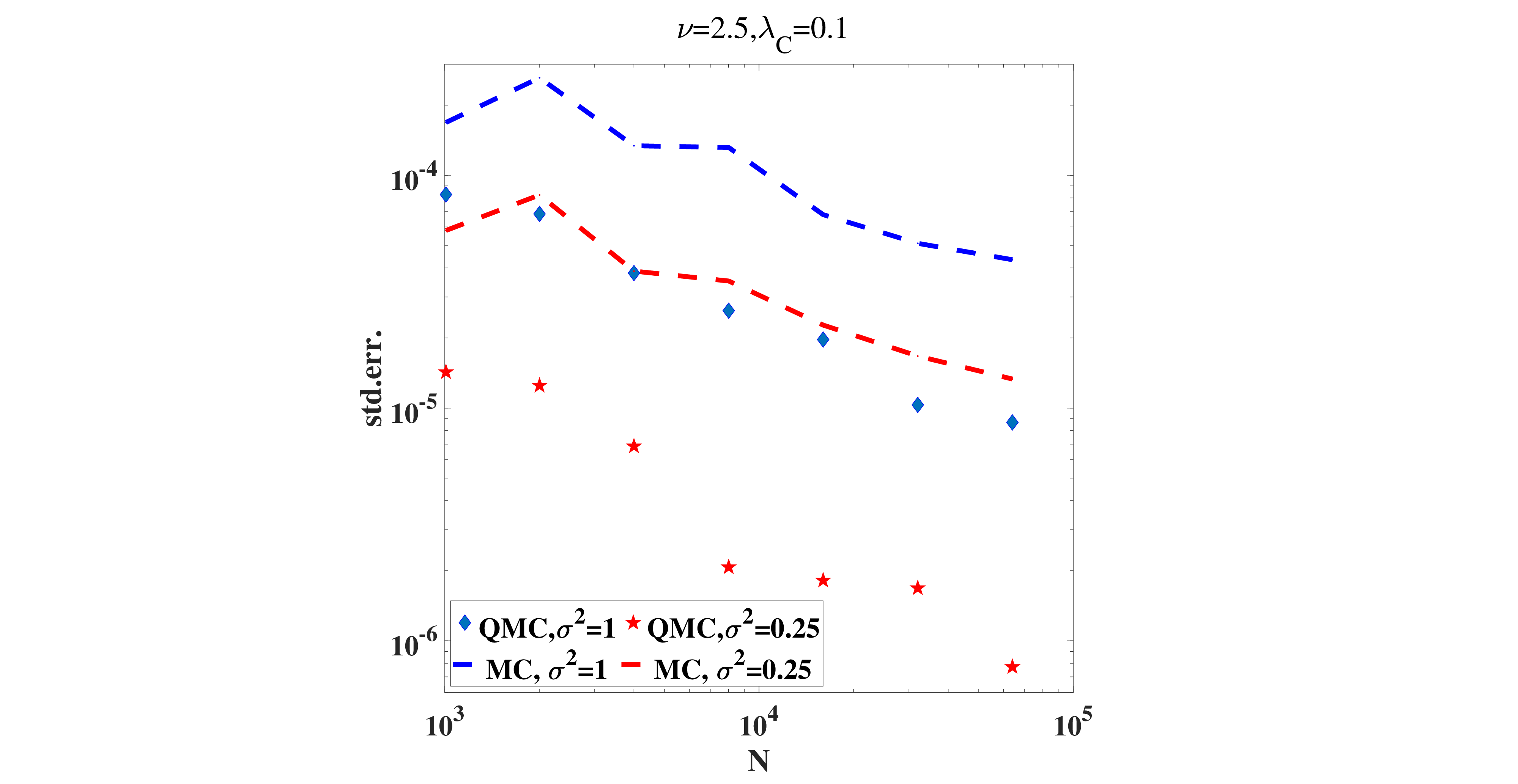}}\\
\subfloat[]{
\includegraphics[trim={15cm 0.5cm 17cm 0.5cm},clip,width=2.3in]{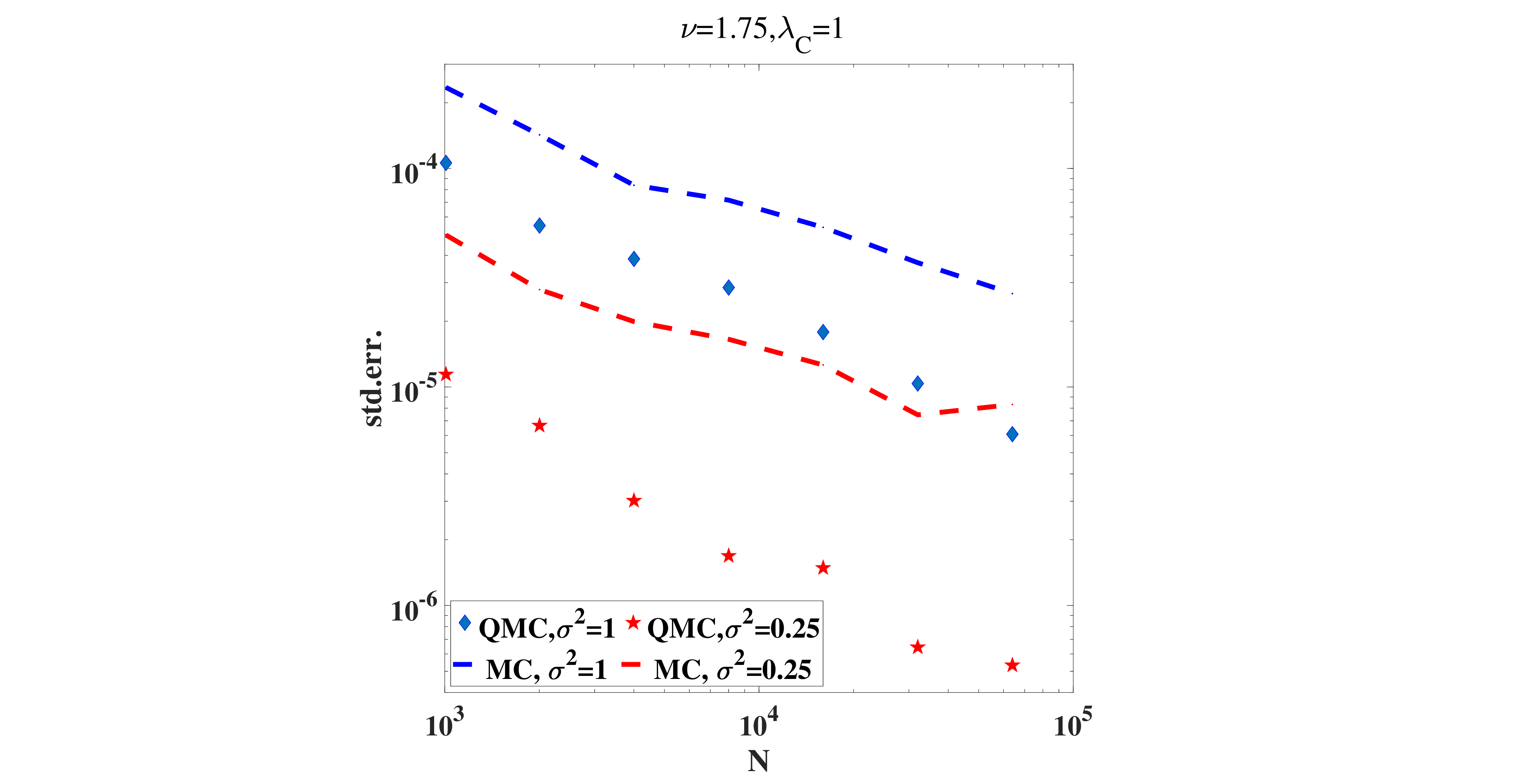}}
\subfloat[]{
\includegraphics[trim={15cm 0.5cm 17cm 0.5cm},clip,width=2.3in]{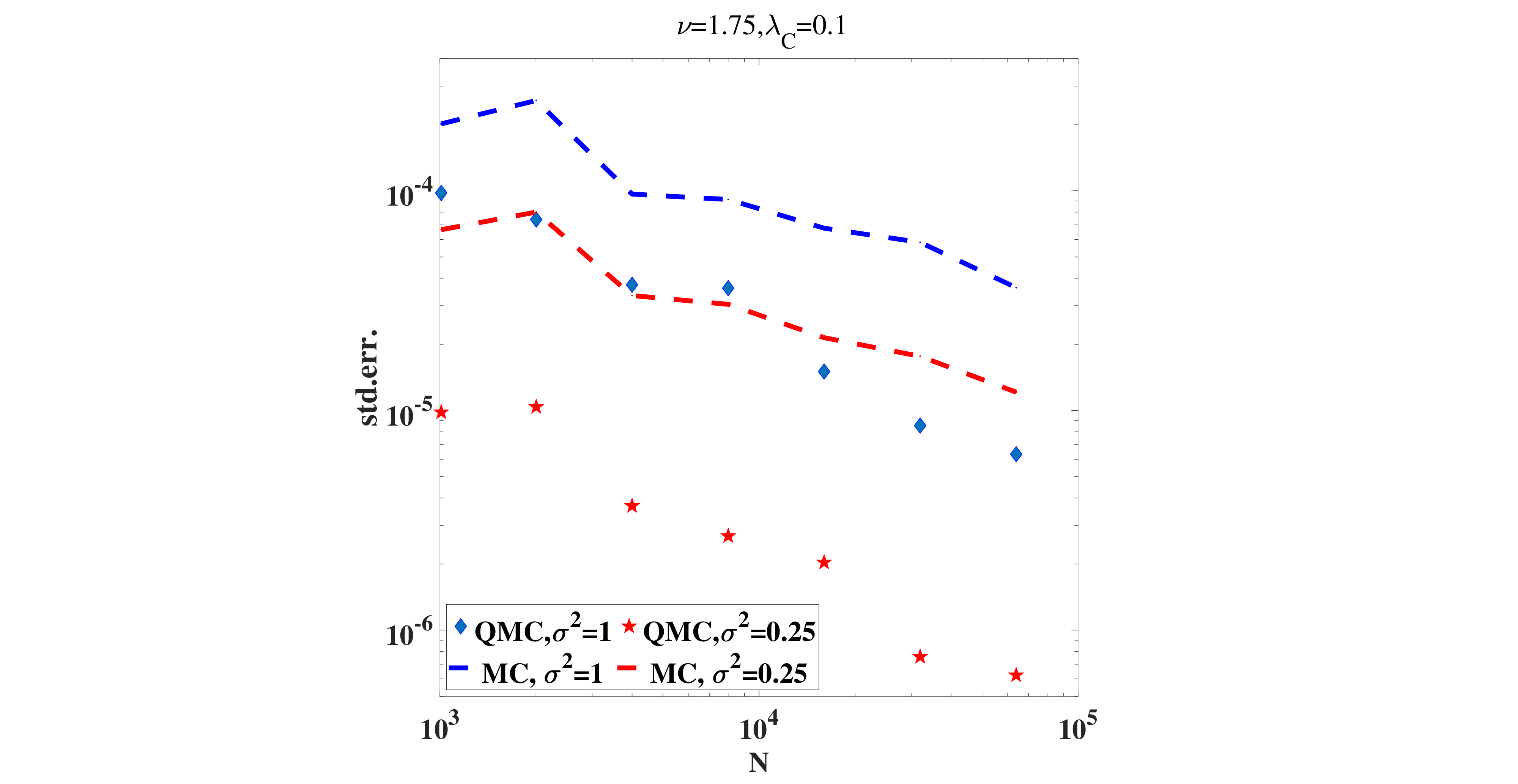}}
\caption{The standard errors of $e_2$ with various Mat\'ern covariance parameters for QMC and MC plotted versus the number of sampling points $N$.}
\label{NT_fg2-2}
\end{figure}

{
Theorem \ref{QMC_final} suggests that the asymptotic behavior of the $\text{RMSE}_{\text{qmc}}$ depends on $\chi$ (and hence $\lambda_*$). In practice, the empirical convergence rates may be lower than the theoretical prediction. This is indeed observed in Figs. \ref{NT_fg2-1} and \ref{NT_fg2-2}. Note that even for a linear PDE, it is not always possible to observe the optimal convergence rate. For example, in the work \cite{main_ref}, the authors also obtained worse QMC convergence rates of 0.55 and 0.58 for $\sigma^2 = 4$, which are lower than that for the MC (0.55 and 0.63). In our experiment, this is attributed to the fact that we have chosen a relatively small random shift $R = 32$ and a small sampling point $N$ (to avoid the high computational complexity). It is expected that a larger $R$ and a larger $N$ can lead to a better convergence rate. In view of Theorem \ref{QMC_final}, the convergence rate $\chi$ depends on the exponent $p\in(0,1)$ in Assumption \ref{ass_indiv}. A larger $p$ implies a slower convergence rate, and vice versa. The value of the parameter $p$ can be estimated in Fig. \ref{NT_fg1}(c), for each choice of parameters $\nu$ and $\lambda_C$, which can potentially be inaccurate. These brief discussions highlight the delicacy of attaining the theoretical convergence rates for QMC methods.}

\section{Conclusion}\label{sec_con}

In this paper, we have investigated the Navier-Stokes equations with random initial data in a
bounded polygonal domain $D\subset\mathbb{R}^2$. We have developed a scheme for
approximating the expected value of the solution, by combining Galerkin FEM, truncated Karhunen-Lo\`{e}ve
expansion of the log-normal initial random field, and quasi-Monte Carlo (QMC) method. Further, we
have derived a bound on the root-mean-square error, including the finite element error, the dimension
truncation error, and the error from the QMC quadrature. The numerical experiments show that
the scheme enjoys fast convergence with respect to the number of sampling points.
{{The present investigation indicates that the QMC methods have enormous potentials for solving nonlinear PDEs with uncertainties.}}

Theoretically, it is of interest to extend the analysis to a more general class of PDEs. Of particular interest are non-Newtonian fluid flow models, describing the motion of fluids with a more general structure \cite{Malek, Malek_NS}. The extension requires a new way to control the fully non-linear diffusion term, and a more refined analysis for the convective term. {Of equal interest is to relax the smallness condition, which plays a crucial role in the overall analysis. Numerically the condition does not appear very critical for the empirical performance.} From an algorithmic point of view, it is of much interest to consider the multilevel and/or changing dimension algorithms \cite{multi_1, multi_2, multi_3, multi_4}, with the QMC algorithm applied to the PDE problems with random data \cite{QMC_multi_4, QMC_multi_3, QMC_multi_2, QMC_multi_1}. Adapting these algorithms to the Navier--Stokes problem is an intriguing topic.








\section*{Acknowledgements}
SK was supported by and National Research Foundation of
Korea Grant funded by the Korean Government (RS-2023-00212227).
GL acknowledges the support from Young Scientists fund (Project number: 12101520) by NSFC and GRF (project number: 17317122), RGC, Hong Kong. We thank the participants of HDA2023 for many constructive advice, especially Dr. Alexander Gilbert (UNSW, Sydney) and Prof. Ian Sloan (UNSW, Sydney).

\begin{appendices}

\section{Useful inequalities}
In this appendix, we collect several useful inequalities.
We use Ladyzhenskaya's inequality \cite[Lemma 3.3]{Temam} frequently.
\begin{lemma}\label{2d_est}
For any open set $D\subset\R^2$, the following inequality holds
\begin{equation}\label{2d_ineq}
\|\uu\|_{L^4(D)}\leq 2^{\frac{1}{4}}\|\uu\|^{\frac{1}{2}}_{L^2(D)}\|\nabla\uu\|^{\frac{1}{2}}_{L^2(D)}, \quad\forall\uu\in H^1_0(D)^2.
\end{equation}
\end{lemma}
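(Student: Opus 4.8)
The plan is to first establish the scalar version of \eqref{2d_ineq}, namely $\|u\|_4 \le 2^{1/4}\|u\|_2^{1/2}\|\nabla u\|_2^{1/2}$ for a scalar $u \in H^1_0(D)$, and then to deduce the vector-valued statement by applying it to the modulus $\abs{\uu}$. By density it suffices to prove the scalar inequality for $u \in C_0^\infty(D)$, which we extend by zero to a smooth compactly supported function on $\R^2$; the case of general $u \in H_0^1(D)$ then follows by approximating in the $H^1$ norm and passing to the limit in both sides of the inequality.

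For the scalar core estimate, I would use the fundamental theorem of calculus separately in each coordinate direction. Writing $u(x_1,x_2)^2 = 2\int_{-\infty}^{x_1} u\,\partial_{x_1}u\,\mathrm{d}s$ and the analogous identity integrating in $x_2$, one obtains the two pointwise bounds
\[
u(x_1,x_2)^2 \le 2\int_\R \abs{u}\,\abs{\partial_{x_1}u}\,\mathrm{d}x_1 =: g_1(x_2), \qquad u(x_1,x_2)^2 \le 2\int_\R \abs{u}\,\abs{\partial_{x_2}u}\,\mathrm{d}x_2 =: g_2(x_1).
\]
Multiplying these gives $u^4 \le g_1(x_2)\,g_2(x_1)$, and integrating over $\R^2$ with Fubini factorizes the right-hand side as $\paren{\int_\R g_1}\paren{\int_\R g_2}$. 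Two applications of the Cauchy--Schwarz inequality then yield $\int_\R g_1 \le 2\|u\|_2\|\partial_{x_1}u\|_2$ and $\int_\R g_2 \le 2\|u\|_2\|\partial_{x_2}u\|_2$, so that $\|u\|_4^4 \le 4\|u\|_2^2\,\|\partial_{x_1}u\|_2\,\|\partial_{x_2}u\|_2$. Finally, the elementary inequality $\|\partial_{x_1}u\|_2\|\partial_{x_2}u\|_2 \le \tfrac12\|\nabla u\|_2^2$ (AM--GM applied to the two squared partial derivatives) gives $\|u\|_4^4 \le 2\|u\|_2^2\|\nabla u\|_2^2$, and taking fourth roots produces exactly the constant $2^{1/4}$.

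To pass from the scalar to the vector-valued inequality, I would apply the scalar result to $v := \abs{\uu} = ((u^{(1)})^2+(u^{(2)})^2)^{1/2}$, where $u^{(1)},u^{(2)}$ denote the components of $\uu$. Since $\uu \in H_0^1(D)^2$, the function $v$ lies in $H_0^1(D)$ and satisfies the pointwise bound $\abs{\nabla v} \le \abs{\nabla \uu}$ almost everywhere, whence $\|\nabla v\|_2 \le \|\nabla\uu\|_2$; moreover $\|v\|_4 = \|\uu\|_4$ and $\|v\|_2 = \|\uu\|_2$ directly from the definition of the vector norms. Substituting into the scalar inequality gives
\[
\|\uu\|_4 = \|v\|_4 \le 2^{\frac14}\|v\|_2^{\frac12}\|\nabla v\|_2^{\frac12} \le 2^{\frac14}\|\uu\|_2^{\frac12}\|\nabla\uu\|_2^{\frac12},
\]
which is the asserted estimate.

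I expect the main subtlety to be this vector-to-scalar reduction: one must justify that $\abs{\uu} \in H_0^1(D)$ together with the Kato-type bound $\abs{\nabla\abs{\uu}} \le \abs{\nabla\uu}$. This requires the chain rule for Sobolev functions applied to the modulus map, which is non-smooth at the origin; it is handled in the standard way by regularizing $\sqrt{t}$ as $\sqrt{t+\varepsilon^2}-\varepsilon$, applying the chain rule to the smooth composition, and letting $\varepsilon \to 0$. The remaining ingredients — the one-dimensional fundamental theorem of calculus, Fubini, Cauchy--Schwarz, AM--GM, and the density/extension-by-zero argument for general open $D$ — are routine.
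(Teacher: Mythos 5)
Your proof is correct and is essentially the classical argument behind the result the paper simply cites (Lemma 3.3 of Temam): the fundamental theorem of calculus in each coordinate, Fubini, Cauchy--Schwarz, and AM--GM give the scalar inequality with the sharp constant $2^{1/4}$, exactly as in that reference. Your reduction of the vector-valued case to the scalar one via $v=\abs{\uu}$ and the Kato-type bound $\abs{\nabla\abs{\uu}}\le\abs{\nabla\uu}$ is a valid (and carefully justified) way to handle the componentwise-to-Euclidean-norm passage that the paper leaves implicit.
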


We also need the following discrete Gronwall inequality \cite[Lemma 5.1]{heywood4}.
\begin{lemma}\label{gron}
Let the non-negative numbers $k,B$ and $a_j$, $b_j$, $c_j$, $\gamma_j$ satisfy
$a_n+k\sum^n_{j=0}b_j\leq k\sum^n_{j=0}\gamma_j a_j+k\sum^n_{j=0}c_j+B$, for all $n\geq 0.$
If $k\gamma_j<1$ for all $j\geq0$, then with $\sigma_j:=(1-k\gamma_j)^{-1}$
\[
a_n+k\sum^n_{j=0}b_j\leq\bigg(k\sum^n_{j=0}c_j+B\bigg)\exp\bigg(k\sum^n_{j=0}\sigma_j\gamma_j\bigg),\quad\forall n\geq0.
\]
\end{lemma}

We also need the following version of Fernique's theorem.
\begin{theorem}\label{Fernique}
 Let $E$ be a real separable Banach space and $X$ be an $E$-valued and centered Gaussian random variable, in the sense that, for each $x^*\in E^*$, $\langle X,x^*\rangle$ is a centered, real-valued Gaussian random variable. Then with $R\defeq\inf\{r\in[0,\infty):\mathbb{P}(\|X\|_E\leq r)\geq\frac{3}{4}\}$,
\[
\int_{\Omega}\exp\bigg(\frac{\|X\|^2_E}{18R^2}\bigg)\,{\rm{d}}\mathbb{P}(\omega)\lesssim 1.
\]
\end{theorem}

Last, we recall the following two auxiliary lemmas \cite{KSS2012, main_ref}.
\begin{lemma}\label{aux_conv_lemma_1}
Fix $m\in\mathbb{N}$, $\lambda>0$ and $A_i$, $B_i>0$ for all $i\in\mathbb{N}$. Then the quantity
\begin{equation}\label{conv_quant}
\left(\sum_{i=1}^mx^{\lambda}_{i}A_i\right)^{\frac{1}{\lambda}}\left(\sum_{i=1}^m\frac{B_i}{x_i}\right)
\end{equation}
is minimized over any sequences $(x_i)_{1\leq i\leq m}$ when
\begin{equation}\label{min_def}
x_i=c\left({B_i/A_i}\right)^{\frac{1}{1+\lambda}}\quad{\rm{for}}\,\,{\rm{all}}\,\,c>0.
\end{equation}
For $m\rightarrow\infty$, the function \eqref{conv_quant} is minimized provided that $x_i$ is defined by \eqref{min_def} for each $i$ and it  is finite if and only if $\sum_{i=1}^{\infty}(A_i B_i^{\lambda})^{1/(1+\lambda)}$ converges.
\end{lemma}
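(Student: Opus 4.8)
The plan is to recognise \eqref{conv_quant} as an extremal problem solved by a single application of Hölder's inequality, with the sequence in \eqref{min_def} being precisely the equality case. First I would record the scale invariance of the functional: replacing $x_i$ by $cx_i$ multiplies $\left(\sum_i x_i^\lambda A_i\right)^{1/\lambda}$ by $c$ and $\sum_i B_i/x_i$ by $c^{-1}$, leaving the product unchanged. This explains at the outset why the minimiser in \eqref{min_def} is determined only up to the free constant $c>0$, and it lets me normalise whenever convenient.

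The core step is the Hölder estimate. Writing $S_1:=\sum_{i=1}^m x_i^\lambda A_i$, $S_2:=\sum_{i=1}^m B_i/x_i$ and $T:=\sum_{i=1}^m (A_iB_i^\lambda)^{1/(1+\lambda)}$, I would factor each summand of $T$ as
\[
(A_iB_i^\lambda)^{\frac{1}{1+\lambda}}=\left(x_i^\lambda A_i\right)^{\frac{1}{1+\lambda}}\left(\frac{B_i}{x_i}\right)^{\frac{\lambda}{1+\lambda}},
\]
and apply Hölder with the conjugate exponents $p=1+\lambda$ and $q=(1+\lambda)/\lambda$. This yields $T\le S_1^{1/(1+\lambda)}S_2^{\lambda/(1+\lambda)}$, equivalently $T^{1+\lambda}\le S_1 S_2^\lambda$; raising to the power $1/\lambda$ gives the lower bound $S_1^{1/\lambda}S_2\ge T^{(1+\lambda)/\lambda}$ valid for every admissible sequence. (For $\lambda=1$ this is exactly the Cauchy--Schwarz inequality.)

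It then remains to identify the equality case and confirm that it is realised by \eqref{min_def}. Equality in Hölder forces $x_i^\lambda A_i\propto B_i/x_i$, i.e. $x_i^{1+\lambda}\propto B_i/A_i$, which is precisely $x_i=c(B_i/A_i)^{1/(1+\lambda)}$. Substituting this choice I would check directly that $x_i^\lambda A_i=c^\lambda(A_iB_i^\lambda)^{1/(1+\lambda)}$ and $B_i/x_i=c^{-1}(A_iB_i^\lambda)^{1/(1+\lambda)}$, so that $S_1=c^\lambda T$ and $S_2=c^{-1}T$, and the product equals $T^{(1+\lambda)/\lambda}$, matching the lower bound. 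This settles minimality for finite $m$.

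For the limit $m\to\infty$ the same Hölder bound passes to the limit term by term, so the product is bounded below by $\left(\sum_{i=1}^\infty (A_iB_i^\lambda)^{1/(1+\lambda)}\right)^{(1+\lambda)/\lambda}$ for every sequence. With the optimal choice, $S_1=c^\lambda T$ and $S_2=c^{-1}T$ are finite multiples of $T$, so the minimal value is finite precisely when $T=\sum_{i=1}^\infty (A_iB_i^\lambda)^{1/(1+\lambda)}<\infty$; if $T=\infty$ the lower bound forces the product to diverge for every admissible sequence. I do not anticipate a genuine obstacle: the only points needing care are the passage to the limit and the bookkeeping of the Hölder exponents, whereas an alternative Lagrange-multiplier computation reproduces the same critical point \eqref{min_def} but would additionally require a convexity argument to certify it as a global minimum, which is why I prefer the Hölder route.
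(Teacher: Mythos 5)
Your argument is correct and complete: the Hölder estimate with conjugate exponents $1+\lambda$ and $(1+\lambda)/\lambda$ applied to the factorization $(A_iB_i^\lambda)^{1/(1+\lambda)}=(x_i^\lambda A_i)^{1/(1+\lambda)}(B_i/x_i)^{\lambda/(1+\lambda)}$ gives the uniform lower bound $T^{(1+\lambda)/\lambda}$, the equality condition $x_i^{1+\lambda}\propto B_i/A_i$ recovers \eqref{min_def}, and the substitution check together with monotone passage to the limit settles both the minimality and the finiteness criterion. The paper itself offers no proof of this lemma --- it is quoted from \cite{KSS2012, main_ref} --- so there is nothing to compare against; your Hölder route is the standard one in that literature and has the advantage over a Lagrange-multiplier computation of certifying a global (not merely critical) minimum in one step, exactly as you observe.
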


\begin{lemma}\label{aux_conv_lemma_2}
Fix $A_j>0$ for all $j\in\mathbb{N}$ and $\sum_{j\geq1}A_j<1$. Then we have
\[
\sum_{|\fuu|<\infty}|\fuu|!\prod_{j\in\fuu}A_j\leq\sum_{k=0}^{\infty}\bigg(\sum_{j\geq1}A_j\bigg)^k=\frac{1}{1-\sum_{j\geq1}A_j}.
\]
Furthermore, for any $B_j>0$ with $\sum_{j\geq1}B_j<\infty$, we also have
\[
\sum_{|\fuu|<\infty}\prod_{j\in\fuu}B_j=\prod_{j\geq1}(1+B_j)=\exp\bigg(\sum_{j\geq1}\log(1+B_j)\bigg)\leq\exp\bigg(\sum_{j\geq1}B_j\bigg).
\]
\end{lemma}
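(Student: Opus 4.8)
The plan is to handle both statements by organizing the sum over all finite subsets $\fuu\subset\N$ according to their cardinality, which recasts each as a series in the elementary symmetric polynomials of the sequence $(A_j)_{j\geq1}$ (respectively $(B_j)_{j\geq1}$). Writing $k=\abs{\fuu}$, I would group
\[
\sum_{\abs{\fuu}<\infty}\abs{\fuu}!\prod_{j\in\fuu}A_j=\sum_{k=0}^{\infty}k!\,e_k,\qquad e_k\defeq\sum_{\abs{\fuu}=k}\prod_{j\in\fuu}A_j,
\]
so that the problem reduces to a clean bound on $e_k$, the $k$-th elementary symmetric polynomial.

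For the first inequality, the key step is the elementary but crucial estimate $e_k\leq\frac{1}{k!}\paren{\sum_{j\geq1}A_j}^{k}$. I would obtain this by expanding the power $\paren{\sum_{j}A_j}^{k}$ as a sum over all ordered $k$-tuples of indices; since every $A_j>0$, discarding the tuples with repeated indices only decreases the sum, and each unordered $k$-element set $\fuu$ arises from exactly $k!$ distinct orderings, giving $\paren{\sum_{j}A_j}^{k}\geq k!\,e_k$. Substituting back cancels the factorial, leaving $\sum_{k=0}^{\infty}\paren{\sum_{j}A_j}^{k}$, which is a convergent geometric series precisely because $\sum_{j\geq1}A_j<1$ and sums to $\paren{1-\sum_{j\geq1}A_j}^{-1}$.

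For the second statement, I would invoke the generating-function identity $\sum_{k\geq0}e_k=\prod_{j\geq1}(1+B_j)$, namely that expanding the convergent infinite product term-by-term reproduces exactly $\sum_{\abs{\fuu}<\infty}\prod_{j\in\fuu}B_j$; convergence of the product follows from $\sum_{j\geq1}B_j<\infty$. Taking logarithms and applying the elementary bound $\log(1+x)\leq x$ for $x>0$ then yields $\prod_{j\geq1}(1+B_j)=\exp\paren{\sum_{j\geq1}\log(1+B_j)}\leq\exp\paren{\sum_{j\geq1}B_j}$.

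I expect the only genuine subtlety to lie in justifying the rearrangements of these infinite positive-term series and the convergence of the infinite product, rather than in any single inequality. Since every summand is nonnegative, Tonelli's theorem (equivalently, monotone convergence on the net of finite partial sums) validates both the grouping by cardinality and the term-by-term expansion of $\prod_{j}(1+B_j)$ with no conditional-convergence concerns; the remaining manipulations—the multinomial counting argument and $\log(1+x)\leq x$—are then routine.
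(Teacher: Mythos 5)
Your proof is correct. The paper itself states this lemma without proof, quoting it from the QMC literature (\cite{KSS2012, main\_ref}), and your argument---bounding $k!\,e_k$ by the multinomial expansion of $\left(\sum_{j\geq1}A_j\right)^k$ after discarding tuples with repeated indices, expanding $\prod_{j\geq1}(1+B_j)$ over finite subsets, and using $\log(1+x)\leq x$, with Tonelli justifying all rearrangements of nonnegative terms---is precisely the standard proof given in those references.
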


\end{appendices}


\bibliography{references}
\bibliographystyle{abbrv}

\end{document}